\newcommand{\noverm}{{n/m}}
\newcommand{\doubleast}{\ast\ast}
\renewcommand{\emph}[1]{{\it#1}}
\newcommand{\Rp}{\R_{+}}
\renewcommand{\det}{\mathrm{det}}
\newcommand{\tbase}{s}
\newcommand{\tmod}{x}
\newcommand{\point}{\aa}
\newcommand{\pointtbase}{\point_{\tbase}}
\newcommand{\Ltbase}{L_{\tbase}}
\newcommand{\SQnew}{S'_Q}
\newcommand{\SQorig}{S''_Q}
\newcommand{\Bad}{\mathbf{Bad}}
\newcommand{\Twist}{\mathcal{T}}
\newcommand{\TTwist}{\widetilde\Twist}
\newcommand{\TS}{\widetilde{S}}
\newcommand{\Tad}{\mathcal{T}\!\mathbf{ad}}
\newcommand{\LambdaQ}{\Lambda_Q}
\begin{document}
\title{Twisted Diophantine approximation on manifolds}

\author{Victor Beresnevich}
\address{Department of Mathematics, University of York, Heslington, York YO10 5DD, UK}
\email{victor.beresnevich\at york.ac.uk}

\author{David Simmons}
\address{434 Hanover Ln, Irving, TX 75062, USA}
\email{david9550\at gmail.com}

\author{Sanju Velani}
\address{Department of Mathematics, University of York, Heslington, York YO10 5DD, UK}
\email{sanju.velani\at york.ac.uk}


\dedicatory{Dedicated to Barak Weiss - 60 not out!}

\begin{Abstract}
In twisted Diophantine approximation, for a fixed  $m\times n$ matrix $\boldsymbol\alpha$ one is interested in sets of vectors $\boldsymbol\beta\in\mathbb R^m$ such that the system of affine forms $\mathbb R^n \ni \mathbf q \mapsto \boldsymbol\alpha\mathbf q + \boldsymbol\beta \in \mathbb R^m$ satisfies some given Diophantine condition. In this paper we introduce the notion of manifolds which are of $\boldsymbol\alpha$-twisted Khintchine type for convergence or divergence. We provide sufficient conditions under which nondegenerate analytic manifolds exhibit this twisted Khintchine-type behaviour. Furthermore, we investigate the intersection properties of the sets of  $\boldsymbol\alpha$-twisted badly approximable and well approximable vectors with nondegenerate manifolds.
\end{Abstract}
\maketitle

\section{Introduction}

Fix $\pdim,\qdim\in\N$ and let $\psi: \Rp \to\Rp$ be a nonincreasing continuous function, where $\Rp:=(0,\infty)$; we will call such a function an \emph{approximation function}. Let $\bfalpha = (\alpha_{ij})$ be an $m\times n$ matrix over $\R$ and  $\bfbeta = (\beta_1,\ldots,\beta_m)^{T}\in\R^m$. The system of affine forms
\begin{equation}\label{vb1}
\R^n \ni \qq = (q_1,\ldots,q_n)^{T} \mapsto \bfalpha\qq + \bfbeta = (\alpha_{i1} q_1 + \ldots + \alpha_{in} q_n + \beta_i)_{1 \leq i \leq m} \in \R^m\,,
\end{equation}
where $\qq$ and $\bfbeta$ are column vectors,
is called \emph{$\psi$-approximable} if there exist infinitely many pairs $(\pp,\qq)\in \Z^\pdim\times (\Z^\qdim\butnot \{\0\})$ such that
\begin{equation}\label{vb1.2}
\|\bfalpha\qq + \bfbeta + \pp\| \leq \psi(\|\qq\|).
\end{equation}
Here and elsewhere $\|\cdot\|$ denotes the Euclidean norm.

The set of all pairs $(\bfalpha,\bfbeta)$ such that the system of affine forms \eqref{vb1} is $\psi$-approximable will be denoted by $\WW_\psi$.
By fixing $\bfbeta$ and letting $\bfalpha$ vary, one winds up in the familiar realm of \emph{inhomogeneous Diophantine approximation}; the special case $\bfbeta = \0$ is called \emph{homogeneous Diophantine approximation}. By contrast, if we fix $\bfalpha$ and let $\bfbeta$ vary then the setup is called \emph{twisted Diophantine approximation} \cite{MR2508636,BHKV}.

In this paper we consider \emph{twisted Diophantine approximation on manifolds}, in which, after $\bfalpha \in \R^{m \times n}$ is fixed, the vector $\bfbeta$ is restricted to lie in some submanifold of $\R^m$.  This contrasts the classical  theory of \emph{Diophantine approximation on manifolds} \cite{MR3618787,BernikDodson,MR548467} in which $\bfbeta \in \R^m$ is fixed and $\bfalpha$ is restricted to lie in some submanifold of $\R^{m \times n}$.

In the standard, classical theory of Diophantine approximation on manifolds, a common problem is to attempt to determine the behaviour of almost every point on a manifold with respect to the induced Lebesgue measure on the manifold (see Remark~\ref{svvb}).    In particular, a manifold $\MM$ is said to be of \emph{Khintchine type for divergence (resp. convergence)} if for every approximation function $\psi$ such that the series
\begin{equation}
\label{psiseries}
\sum_{q=1}^\infty q^{\qdim - 1} \psi^\pdim(q)
\end{equation}
diverges (resp. converges), almost every (resp. almost no) point of $\MM$ is  $\psi$-approximable. More precisely, a point $\bfalpha\in\MM$ is called inhomogeneously $\psi$-approximable if $\bfalpha \in \WW_\psi(\bfbeta)$ for every $ \bfbeta$ where
$$
\WW_\psi(\bfbeta):=\{\bfalpha\in\R^{m \times n} :(\bfalpha,\bfbeta) \in \WW_\psi\} \,
$$
and homogeneously $\psi$-approximable if $\bfalpha \in \WW_\psi(\bf 0)$. Clearly, an inhomogeneous Khintchine type result contains the corresponding homogeneous result.  The terminology is adopted
to emphasize  Khintchine's contribution who discovered these properties in the case $\MM = \R^m$.

When $m=1$, we are in the framework of the so called dual theory, and in the homogeneous case it was shown that every nondegenerate\Footnote{If $\ff:\R^d\supset U \to \MM \subset \R^m$ is a local parameterization of a manifold $\MM$, then $\MM$ is called \emph{nondegenerate} at a point $\ff(\xx)$ if for some $k\in\N$, $\ff$ is $\CC^k$ and the partial derivatives of $\ff$ at $\xx$ (including higher-order partial derivatives up to order $k$) span $\R^m$ \cite{KleinbockMargulis2}. The manifold $\MM$ is called nondegenerate if it is nondegenerate at almost every point. If $\MM$ is connected and analytic, then $\MM$ is nondegenerate if and only if it is not contained in any affine hyperplane.} manifold is of  Khintchine type for both convergence \cite{Beresnevich_Groshev,BKM2} and divergence \cite{BBKM}. The inhomogeneous generalizations  of these results were subsequently obtained in \cite{BBV}.

When $n=1$, we are in the framework of the so called simultaneous theory, and  it is was shown that every analytic manifold is of homogeneous  Khintchine type for divergence \cite{Beresnevich_Khinchin} and every nondegenerate curve is of inhomogeneous Khintchine type for divergence \cite{BVVZ2}.
Very recently, it was shown in \cite{BeresnevichDatta1} that every nondegenerate manifold is of Khintchine type for divergence. In the convergence case, within the homogeneous setting, until recently only partial results had been known \cite{DRV,BVVZ,Simmons10} before it was verified in \cite{Beresnevich-Yang} that  every nondegenerate manifold is of Khintchine type for convergence.  The latter has also been recently generalised to the inhomogeneous setting \cite{BeresnevichDatta1}.

The upshot is that the standard Khintchine type theory is today in a reasonably complete state for nondegenerate manifolds.  Surprisingly nothing seems to be known for twisted Diophantine approximation on manifolds. The purpose of this work is to address this imbalance. In what follows
$$
\Twist_\psi(\bfalpha):=\{\bfbeta\in\R^m:(\bfalpha,\bfbeta) \in \WW_\psi\}$$ is the set of $(\psi,\bfalpha)$-approximable
points $\bfbeta\in\R^m$. Our goal is to establish Khintchine type results for the intersections of $\Twist_\psi(\bfalpha)$ with proper submanifolds $\MM$ of $\R^m$.  Note that it makes sense to assume that $m\geq 2$, since the only submanifolds of $\R^1$ are open sets and discrete sets.

To begin with, we consider the non-manifold theory, that is, when $\MM=\R^m$.
Underpinning this theory is the following fundamental result due to Kurzweil \cite{Kurzweil}.

\begin{theorem}[Kurzweil, {\cite{Kurzweil}}] We have that
\begin{equation}\label{Kurz}
\text{\it$\bfalpha$ is badly approximable\quad\iff\quad} \begin{minipage}{0.45\textwidth}for any nonincreasing function $\psi$ such that \eqref{psiseries}  diverges, $\Twist_\psi(\bfalpha)$ has full Lebesgue measure.
\end{minipage}
\end{equation}
\end{theorem}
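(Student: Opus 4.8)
The plan is to prove both implications via the well-known correspondence between twisted approximation and the dynamics of the one-parameter diagonal flow on the space of unimodular lattices, or — since this is meant as a self-contained warm-up — via a more hands-on counting/covering argument. I would set up the geometry of numbers picture: for a fixed $\bfalpha$, a target $\bfbeta$ lies in $\Twist_\psi(\bfalpha)$ precisely when the orbit of $\bfbeta$ under $\qq \mapsto \bfalpha\qq \bmod \Z^m$ returns infinitely often to shrinking balls of radius $\psi(\|\qq\|)$ centred at $\0$. The convergence direction of Khintchine's theorem (trivially, for $\MM = \R^m$) handles one half of the non-badly-approximable case, so the real content is: (i) $\bfalpha$ badly approximable $\Rightarrow$ $\Twist_\psi(\bfalpha)$ is full for every divergent $\psi$, and (ii) $\bfalpha$ not badly approximable $\Rightarrow$ there exists a divergent $\psi$ with $\Twist_\psi(\bfalpha)$ null.

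For direction (i), I would fix a divergent nonincreasing $\psi$ and show that for a.e. $\bfbeta$ the system \eqref{vb1.2} has infinitely many solutions. The key step is a quasi-independence / second Borel--Cantelli argument: let $A_q = A_q(\bfalpha)$ be the set of $\bfbeta \in [0,1)^m$ with $\|\bfalpha\qq + \bfbeta + \pp\| \le \psi(\|\qq\|)$ for some $\pp$, where $\qq$ ranges over a suitably chosen sparse subsequence of integer vectors with $\|\qq\| \le Q$. Each $A_q$ has measure $\asymp \psi(\|\qq\|)^m$, and $\sum \Leb(A_q) = \infty$ by the divergence of \eqref{psiseries} together with a counting estimate for the number of available $\qq$. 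Badly approximability of $\bfalpha$ enters to control the overlaps $\Leb(A_q \cap A_{q'})$: the difference $\bfalpha(\qq - \qq')$ cannot be too close to an integer point relative to $\|\qq - \qq'\|$, which forces quasi-independence $\Leb(A_q \cap A_{q'}) \ll \Leb(A_q)\Leb(A_{q'}) + (\text{diagonal terms})$. Then the divergence Borel--Cantelli lemma (in the Erd\H{o}s--Chung--Sprindžuk form) gives $\Leb(\limsup A_q) = 1$, and periodicity upgrades this to full measure on all of $\R^m$.

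For direction (ii), suppose $\bfalpha$ is not badly approximable, so there is a sequence $\qq_k$ with $\|\bfalpha\qq_k + \pp_k\| \cdot \|\qq_k\|^{n/m} \to 0$. I would then build a divergent $\psi$ by hand, exploiting these exceptionally good homogeneous approximations: around each "resonant" point one can inflate the approximation radius along a rapidly growing but sparse subsequence of $q$-values, arranging that $\sum q^{n-1}\psi^m(q)$ still diverges (by padding $\psi$ generously between the resonant scales) while the measure of $\Twist_\psi(\bfalpha)$ is controlled. The point is that when $\bfalpha\qq_k$ is abnormally close to $\Z^m$, the sets $A_{q_k}$ essentially coincide for many consecutive $k$ rather than spreading out, so the $\limsup$ fails to fill space — a Borel--Cantelli-type upper bound (convergence part, applied to the "genuinely new" contributions) shows $\Leb(\Twist_\psi(\bfalpha)) < 1$, and a zero-one law (the set is invariant under the relevant group of rational translations, or one uses the ergodicity of the torus translation when $\bfalpha$ is irrational, plus a separate trivial argument in the rational case) forces it to be null.

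The main obstacle is the overlap estimate in direction (i): making the quasi-independence precise requires a careful choice of the sparse subsequence of $\qq$'s (so that consecutive denominators grow geometrically and pairwise differences are well-distributed), combined with the Diophantine lower bound coming from bad approximability to bound $\Leb(A_q \cap A_{q'})$ uniformly. A secondary subtlety is that $\psi$ need not be nice (e.g.\ not $\asymp$ a power), so the counting of $\qq$ with $\|\qq\| \le Q$ and the summation of $\psi$ along the subsequence must be done with only monotonicity in hand; the standard trick is to pass to dyadic ranges of $\|\qq\|$ and use that $\psi$ is essentially constant on a geometric scale up to a controlled factor. I expect direction (ii) to be comparatively soft once the construction of $\psi$ is set up, since it only needs an upper bound on a measure together with a zero-one law.
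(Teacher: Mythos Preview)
The paper does not prove this theorem at all: it is stated with attribution to Kurzweil and cited as a known result, serving only as background and motivation for the notion of $\bfalpha$-twisted Khintchine type on manifolds. There is therefore no proof in the paper to compare your proposal against.

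That said, your outline is broadly the right shape for a proof of Kurzweil's theorem. Direction (i) via a second Borel--Cantelli argument with quasi-independence of the sets $A_\qq$, where bad approximability of $\bfalpha$ supplies the separation needed to bound $|A_\qq\cap A_{\qq'}|$, is the standard route and is essentially the method the paper later adapts (in Lemma~\ref{GDBC} and Subclaim~\ref{QIE}) for the manifold setting of Theorem~\ref{theorem2}. Direction (ii), constructing a divergent $\psi$ concentrated on the scales where $\bfalpha$ has abnormally good homogeneous approximations, is also the correct idea. One remark: your aside that ``the convergence direction of Khintchine's theorem handles one half of the non-badly-approximable case'' is misplaced --- Kurzweil's statement concerns only divergent $\psi$, and there is no convergence half to dispose of here. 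The zero--one law you invoke for (ii) also needs care: $\Twist_\psi(\bfalpha)$ is invariant under translation by $\Z^m$ but not obviously under a group acting ergodically on $\T^m$, so one typically argues more directly that the constructed $\psi$ makes the relevant $\limsup$ set genuinely null (via a convergence Borel--Cantelli bound on the ``new'' contributions), rather than relying on a zero--one dichotomy.
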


Recall,  that $\bfalpha$  is said to be badly approximable if $\bfalpha \notin  \WW_{c\psi}(\bf0)$   for some $c>0$ and $\psi(q)= q^{-n/m}$. For further details of badly approximable points  see the discussion centred around \eqref{Bad} below. At this point it is enough to note that the set of such points is of zero measure.   Returning to  Kurzweil's theorem, a crucial observation to make  is that there is an interplay between properties imposed on $\bfalpha$ and the class of approximation functions $\psi$  considered. Indeed, if we wish to include a larger class of $\bfalpha$ on the left hand side of \eqref{Kurz}, we would have to tighten the condition on $\psi$ on the right of \eqref{Kurz}. For instance, Chaika and Constantine \cite{MR3941149} established that \eqref{Kurz} holds for $\bfalpha$ from a much larger set of full measure if we request that the functions $\psi$ in \eqref{Kurz} are such that $q\mapsto q\psi(q)$ is nonincreasing\footnote{For completeness, we note that when 
$m=n=1$, Fuchs and Kim \cite{MR3494133} extended Kurzweil’s theorem to arbitrary $\alpha$
by replacing the series \eqref{psiseries} with an appropriately defined series involving the principal convergents of $\alpha$.}. Note that this interplay is not  present in the  standard theory where  for any nonincreasing function $\psi$  such that \eqref{psiseries}  diverges, $\WW_\psi(\bfbeta)$ has full Lebesgue measure irrespective of $\bfbeta  \in \R^m$.

The upshot of the above discussion is that even for the  $\R^m$ setting, Khintchine type results within the twisted framework are dependent on the properties of $\bfalpha$ and $\psi$. Thus, in order to develop a twisted Khintchine type theory of Diophantine approximations on manifolds it is natural to introduce the following notion that brings into play manifolds and formalises the trade-off between the properties of $\bfalpha$ and the admissible class of approximation functions $\psi$. Given an $m\times n$ matrix $\bfalpha$ and a class of approximation functions $\FF$, we will say that a manifold $\MM\subset\R^m$ is of:
\begin{itemize}
    \item
\emph{$\bfalpha$-twisted Khintchine type for convergence over $\FF$} if
\begin{equation}\label{vb1.4}
\forall\;\psi\in\FF\quad\text{\eqref{psiseries} converges}\;\;\Rightarrow\;\;\text{$\bfbeta\in\Twist_\psi(\bfalpha)$ for almost every $\bfbeta\in\MM$}\,;
\end{equation}

\bigskip

\item \emph{$\bfalpha$-twisted Khintchine type for divergence over $\FF$} if \begin{equation}\label{vb1.5}
\forall\;\psi\in\FF\quad\text{\eqref{psiseries} diverges}\;\;\Rightarrow\;\;\text{$\bfbeta\not\in\Twist_\psi(\bfalpha)$ for almost every $\bfbeta\in\MM$}\,.
\end{equation}
\end{itemize}

\bigskip

In this paper we will be addressing the following general problem for nondegenerate manifolds. 

\medskip

\noindent\textbf{General Problem:} {\it Given a class $\FF$ of approximation functions $\psi$, under what conditions on $\bfalpha$ is it true that every nondegenerate manifold $\MM\subset\R^m$
is of $\bfalpha$-twisted Khintchine type for convergence and/or divergence over $\FF$?
}

\medskip


To place this general problem within the context  of the classical theory, it is natural to focus on specific classes of approximation functions.
As a starting point,  let $\FF_{\rm A}$   be the class of \textbf{all} nonincreasing functions $\psi$. Then the general problem reduces to  the following, which may be viewed as seeking an analogue of Kurzweil’s theorem for nondegenerate manifolds.

\medskip

\noindent\textbf{Problem A:} {\it
Determine conditions on $\bfalpha$ so that any nondegenerate manifold is of $\bfalpha$-twisted Khintchine type for convergence/divergence over $\FF_{\rm A}$.}

\medskip

In view of Kurzweil's theorem,  $\bfalpha$ in Problem~A should be at least badly approximable.  By restricting our attention to subclasses of  $\FF_{\rm A}$ we would expect to be able to  relax  the  condition  that $\bfalpha$ is badly approximation.  We now explore two natural subclasses.

\bigskip

Let $\FF_{\rm B}  \subset \FF_{\rm A}  $ denote the  class of approximation functions $\psi : \Rp\to\Rp$   consisting  of
$$
\psi \, :  \,    q    \ \to  \ \psi(q)\, :=  \, c \,  q^{-n/m}    \qquad {\rm \ for \  some \ constant  }  \quad  c>0   \, .
$$
This class is instrumental for defining the so-called badly approximable systems \cite{MR1724255}.
A pair $(\bfalpha,\bfbeta)$ such that $(\bfalpha,\bfbeta)\not\in\WW_\psi$ for some $\psi\in\FF_{\rm B}$ is called \emph{badly approximable}. The set of all badly approximable  pairs $(\bfalpha,\bfbeta)$ will be denoted by $\Bad$. Thus
\begin{equation}\label{Bad0}
\Bad = \bigcup_{\psi\in\FF_{\rm B}} \R^{m \times n}\butnot \WW_{\psi}.
\end{equation}
In the standard classical theory one studies the sets of inhomogeneously badly approximable points
\begin{equation}\label{Bad}
\Bad(\bfbeta):=\{\bfalpha:(\bfalpha,\bfbeta)\in\Bad\}\,.
\end{equation}
In view of the inhomogeneous Khintchine type result, we have that $\Bad(\bfbeta)
$ is of zero measure for all $\bfbeta \in \R^m$. However, it is well know that  it is a set of full Hausdorff dimension; that is,
$$ \dim \Bad(\bfbeta) = mn \, .  $$
For further details   see \cite{3problems,KW}  and references within.   We now turn our attention to the analogue of badly approximable within the twisted framework.  For a given $\bfalpha \in \R^{m \times n}$, the set of badly $\bfalpha$-approximable vectors is defined by
$$
\Tad(\bfalpha):=\{\bfbeta\in\R^m:(\bfalpha,\bfbeta)\in\Bad\}\,.
$$
Clearly a point $\bfbeta$ is badly $\bfalpha$-approximable if and only if it is not $(\psi,\bfalpha)$-approximable for some $\psi\in\FF_{\rm B}$. In other words,
\[
\Tad(\bfalpha) := \bigcup_{\psi\in\FF_{\rm B}} \R^\pdim\butnot \Twist_{\psi}(\bfalpha).
\]
The set of badly $\bfalpha$-approximable vectors is known to be of full Hausdorff dimension \cite{BFS1,BHKV}. It is also know that its  Lebesgue measure depends on the Diophantine properties of $\bfalpha$. This is not the case in classical framework of inhomogeneous Diophantine approximation in which the analogous set of inhomogeneous badly approximable vectors is always of measure zero. In this paper we will be interested in badly $\bfalpha$-approximable points on manifolds. In particular, we will address the following problem, which, in view of the fact that for any $\psi\in\FF_{\rm B}$ the sum \eqref{psiseries} diverges, is another special case of the General Problem.

\medskip

\noindent\textbf{Problem B:} {\it
Determine conditions on $\bfalpha$ so that for any nondegenerate manifold $\MM\subset\R^m$ almost every point $\bfbeta\in\MM$ is not badly $\bfalpha$-approximable.}

\bigskip

Let $\FF_{\rm E}  \subset \FF_{\rm A}  $ denote the  class of approximation functions $\psi : \Rp\to\Rp$   consisting  of
$$
\psi \, :  \,    q    \ \to  \ \psi(q)\, :=   \,  q^{-n/m - \epsilon}    \qquad {\rm \ for \  some \  }  \quad  \epsilon>0   \, .
$$ This  class is associated with the property of ``extremality'' in the standard Khintchine-type theory.
A pair $(\bfalpha,\bfbeta)$ such that $(\bfalpha,\bfbeta)\not\in\WW_\psi$ for some $\psi\in\FF_{\rm E}$ is called {\it very well approximable}. The set of all very well approximable  pairs $(\bfalpha,\bfbeta)$ will be denoted by $\WW$. Thus
\begin{equation}\label{Extremal0}
\WW = \bigcup_{\psi\in\FF_{\rm E}} \WW_{\psi}.
\end{equation}
In the standard classical theory one studies the sets of inhomogeneously very well approximable points
\begin{equation}\label{Extremal}
\WW(\bfbeta):=\{\bfalpha:(\bfalpha,\bfbeta)\in\WW\}\,.
\end{equation}
In view of the inhomogeneous Khintchine type result, we have that $\WW(\bfbeta)
$ is of zero measure for all $\bfbeta \in \R^m$.  We now turn our attention to the analogue of very well approximable within the twisted framework.  For a given $\bfalpha \in \R^{m \times n}$, the set of very well $\bfalpha$-approximable vectors is defined by
$$
\WW(\bfalpha):=\{\bfbeta\in\R^m:(\bfalpha,\bfbeta)\in\WW\}\,.
$$
The following is the specialisation of the General Problem to the class $\FF_{\rm E}$.

\medskip

\noindent\textbf{Problem C:} {\it
Determine conditions on $\bfalpha$ so that for any nondegenerate manifold $\MM\subset\R^m$ almost every point $\bfbeta\in\MM$ is not very well $\bfalpha$-approximable.}

\begin{remark}
The framework of twisted Diophantine approximations  concerning the sets $\Twist_\psi(\bfalpha)$ can be regarded as the quantitative refinement of Kronecker's classical theorem \cite[p.~53]{Cassels} on the density of the set $\boldsymbol\alpha\mathbb Z^n$ modulo $1$.
This set can be viewed as the orbit of the $\mathbb Z^n$-action
$$
\mathbb Z^n\times\mathbb T^m\to \mathbb T^m
$$
on the torus $\mathbb T^m=(\mathbb R/\mathbb Z)^m$, defined by
$$
(\mathbf{q},\boldsymbol{\beta})\mapsto\boldsymbol{\alpha}\mathbf{q}+\boldsymbol{\beta}\mod 1\,,
$$
see \cite{BHKV} for details.
In the case of $n=m=1$ it boils down to circle rotations by the angle $2\pi\alpha$ -- an area that has deep connections to the theory of continued fractions and that has been studied in  depth. For $n=1$ and $m>1$ it can be understood as torus rotations, and the resulting orbit is nothing but a Kronecker's sequence. Many problems have been studied in twisted Diophantine approximation including standard and weighted badly approximable points \cite{MR3630719, BMS, MR3628933}, well approximable points \cite{MR4834219,MR2335077, Moshchevitin5, Shapira}, multiplicative problems \cite{MR3717987, MR2753608},
Khintchine type results \cite{MR3494133, MR2853045, Kim2023, Kurzweil, Simmons8} and results with restricted $\qq$ \cite[Thm~1.11]{HK25}. We re-iterate that the main goal of this paper is to advance the twisted Khintchine type theory of Diophantine approximations on manifolds. To the best of our knowledge all previous twisted results are for $\R^m$ only.
\end{remark}

\subsection{Main results}

We need to introduce a little more terminology before stating our main theorems regarding manifolds within the twisted framework. A nonincreasing function $\psi:\Rp\to\Rp$ is called \emph{doubling} if there is $K>1$ such that
$$
\psi(x)\leq K\psi(2x)\qquad\text{for all $x>0$.}
$$
It is readily seen that a nonincreasing function $\psi:\Rp\to\Rp$ is doubling if and only if for every $K_1 > 1$, there exists $K_2 > 1$ such that
\[
K_1^{-1} \leq y/x \leq K_1  \ \  \text{ implies } \  \  K_2^{-1} \leq \psi(y)/\psi(x) \leq K_2.
\]
Indeed, one can take $K_2=(2K_1)^{\log_2 K}$.
Next, for each $\tau > 0$, write
$$\psi_\tau(q) := q^{-\tau}  \, . $$ The \emph{exponent of irrationality} of a matrix $\bfalpha$ is the supremum of $\tau$ such that $\bfalpha$ is homogeneously $\psi_\tau$-approximable:
\[
\omega(\bfalpha) \df \sup\{\tau > 0 : (\bfalpha, \0 ) \in \WW_{\psi_\tau}\}.
\]
It follows from the well-known theorems of Dirichlet and Khintchine that every $m\times n$ matrix $\bfalpha$ satisfies $\omega(\bfalpha) \geq n/m$, and that almost every matrix $\bfalpha$ satisfies $\omega(\bfalpha) = n/m$. The matrix $\bfalpha$ is called \emph{very well approximable} if $\omega(\bfalpha) > \qdim/\pdim$; thus almost every matrix is not very well approximable.

We can now state our main result regarding manifolds of $\bfalpha$-twisted Khintchine type for convergence. In all of the theorems below, we assume that $m\geq 2$.

\begin{theorem}[Khintchine type for convergence]
\label{theorem1}
Let $\bfalpha$ be an $m\times n$ matrix whose transpose $\bfalpha^T$ satisfies
\begin{equation}
\label{NVWA}
\omega(\bfalpha^T) < \left(\frac\qdim\pdim - \frac\qdim{2\pdim^2(\pdim-1)}\right)^{-1}.
\end{equation}
Then for any doubling approximation function $\psi$ such that \eqref{psiseries} converges, $\Twist_\psi(\bfalpha)$ has zero measure on any nondegenerate curve and on any nondegenerate analytic manifold. In other words, every nondegenerate analytic manifold is of $\bfalpha$-twisted Khintchine type for convergence over the class of doubling approximation functions.
\end{theorem}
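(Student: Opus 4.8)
The plan is to prove the convergence assertion by a (convergence) Borel--Cantelli argument, with essentially all the work concentrated in a single counting estimate for the ``resonant'' frequencies $\qq$, whose validity is governed by the Diophantine hypothesis \eqref{NVWA}.

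\textbf{Step 1: Borel--Cantelli set-up and reduction to counting.} Fix a nondegenerate analytic manifold or nondegenerate curve $\MM\subset\R^m$; working locally we may take $\MM$ to be a bounded piece carrying a bi-Lipschitz analytic parametrisation, write $d=\dim\MM\in\{1,\dots,m-1\}$, and let $\mu$ be the induced $d$-dimensional measure. For $\qq\in\Z^n$ set $R_\qq:=\{\bfbeta\in\MM:\mathrm{dist}(\bfbeta,\,-\bfalpha\qq+\Z^m)\le\psi(\|\qq\|)\}$, so that $\Twist_\psi(\bfalpha)\cap\MM=\limsup_{\qq}R_\qq$. Since $\psi$ is nonincreasing and doubling, on the dyadic block $\mathcal B_k=\{\qq:2^k\le\|\qq\|<2^{k+1}\}$ we have $\psi(\|\qq\|)\asymp r_k:=\psi(2^k)$, and convergence of \eqref{psiseries} is equivalent to $\sum_k 2^{kn}r_k^{m}<\infty$. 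From the bi-Lipschitz parametrisation, $\mu\bigl(\MM\cap B(\xx,\rho)\bigr)\ll\rho^{d}$; hence $R_\qq$ is nonempty only if $\mathrm{dist}(-\bfalpha\qq\bmod 1,\MM)\le\psi(\|\qq\|)$, and then $\mu(R_\qq)\ll\psi(\|\qq\|)^{d}$. Therefore $\mu\bigl(\Twist_\psi(\bfalpha)\cap\MM\bigr)=0$ as soon as $\sum_k r_k^{d}N_k<\infty$, where $N_k:=\#\{\qq\in\mathcal B_k:\mathrm{dist}(\bfalpha\qq\bmod 1,\MM)\le r_k\}$. Given $\sum_k 2^{kn}r_k^{m}<\infty$, it thus suffices to prove the clean counting bound
\begin{equation}\label{plan-count}
N(Q,r):=\#\{\qq\in\Z^n:\ \|\qq\|\le Q,\ \mathrm{dist}(\bfalpha\qq\bmod 1,\MM)\le r\}\ \ll\ Q^{n}\,r^{\,m-d},
\end{equation}
with implied constant independent of $Q,r$, for $(Q,r)$ in the relevant range (roughly $r$ not too small relative to $Q$), together with a separate treatment of the blocks with $r_k$ below that threshold.

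\textbf{Step 2: The counting estimate \eqref{plan-count}.} This is the crux. The right-hand side of \eqref{plan-count} is exactly what equidistribution predicts: cover the $r$-neighbourhood of $\MM$ in $\mathbb T^m$ by $\ll r^{-d}$ balls of radius $\asymp r$, so it suffices to bound the number of points of the Kronecker grid $\{\bfalpha\qq\bmod 1:\|\qq\|\le Q\}$ in each such ball by $\ll Q^{n}r^{m}$. If some ball of radius $r$ contained substantially more, then pigeonholing on differences produces substantially more than $Q^{n}r^{m}$ integer vectors $\mathbf v$ with $\|\mathbf v\|\ll Q$ and $\|\bfalpha\mathbf v\bmod 1\|\ll r$; but an anomalously large number of such $\mathbf v$ forces them, by a geometry-of-numbers/transference argument (integer points in a symmetric convex body and its polar), to cluster near a proper rational subspace, equivalently forces a nonzero $\mathbf a\in\Z^{m}$ of controlled size with $\|\bfalpha^{T}\mathbf a\bmod 1\|$ too small --- contradicting \eqref{NVWA}. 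Nondegeneracy of $\MM$ enters a second time here: when $\bfalpha^{T}$ does admit moderately good rational approximations the Kronecker grid concentrates near certain rational affine subspaces, and nondegeneracy limits how much of such a cluster the curved thin set $\MM$ can absorb; balancing the size of the cluster against this alignment loss and optimising over the auxiliary scale is what produces the precise exponent $\tfrac{n}{2m^{2}(m-1)}$ in \eqref{NVWA}.

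\textbf{Step 3: From analytic manifolds to curves, and the remaining blocks.} For $d\ge2$ one reduces to $d=1$: foliate $\MM$ locally, through $\mu$-almost every point, by analytic curves depending analytically on a transverse parameter; since a connected analytic manifold is nondegenerate if and only if it lies in no affine hyperplane, $\mu$-almost every curve of the foliation is again nondegenerate, and if $\Twist_\psi(\bfalpha)$ meets $\mu$-almost every such curve in a set of one-dimensional measure zero then, by Fubini, $\mu\bigl(\Twist_\psi(\bfalpha)\cap\MM\bigr)=0$. It then remains to handle the dyadic blocks for which $r_k$ lies below the Dirichlet threshold (roughly $r_k\ll 2^{-kn/m}$, up to the summability margin), where \eqref{plan-count} is no longer the correct bound; there one argues more crudely, using that \eqref{NVWA} forces $\bfalpha$ not to be very well approximable, to show that $\sum_k r_k^{d}N_k$ over those blocks is finite.

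\textbf{Main obstacle.} The reductions in Steps~1 and 3 are routine; the entire difficulty is \eqref{plan-count}, i.e.\ controlling the failure of equidistribution of $\bfalpha\qq\bmod 1$ \emph{at scale $r$ relative to the curved thin set $\MM$}, extracting from any pathological concentration a rational approximation to $\bfalpha^{T}$ of forbidden quality, and performing the scale optimisation producing the threshold in \eqref{NVWA}. Obtaining \eqref{plan-count} with \emph{no} $Q^{\epsilon}$ (indeed no $\log Q$) loss is essential, since convergence of \eqref{psiseries} leaves no slack to absorb such a factor.
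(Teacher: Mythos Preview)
Your outline is not the route the paper takes, and the part you yourself flag as the obstacle is indeed a genuine gap: you never prove the counting bound \eqref{plan-count}, and your sketch in Step~2 (pigeonhole on differences, then ``balance against alignment loss and optimise over an auxiliary scale'') is too vague to constitute an argument. In particular, you do not explain what the auxiliary scale is, how the optimisation is performed, or why the specific threshold $\omega(\bfalpha^T)<\bigl(\tfrac{n}{m}-\tfrac{n}{2m^2(m-1)}\bigr)^{-1}$ emerges. A second, smaller issue: your claim that ``convergence of \eqref{psiseries} leaves no slack'' is not quite right. Since replacing $\psi$ by $\max(\psi,\psi_{\ast\ast})$ with $\psi_{\ast\ast}(q)=(q^n\log^2 q)^{-1/m}$ preserves convergence of \eqref{psiseries}, one may assume $\psi\ge\psi_{\ast\ast}$, which provides a $\log^2$ cushion; the paper uses this (Remark~\ref{remark2}), and it is essential.

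The paper's proof is structurally different and does not go through any estimate like \eqref{plan-count}. After fibering to curves (your Step~3 should come first), it bounds $|S_Q|=|\ff^{-1}(A_{\psi,Q})|$ directly by a linearisation trick: writing $|S_Q|=\tfrac12\int|\{x\in[-1,1]:s+\theta(Q)x\in S_Q\}|\,ds$ with a carefully chosen scale $\theta(Q)$, it replaces $\ff(s+\theta x)$ by its first-order Taylor polynomial, which reduces the inner integrand to a question about how a fixed line segment meets a periodic array of boxes associated to the lattice $\Lambda_Q=g_Q u_{\bfalpha}\Z^{m+n}$. For ``good'' base points $s$ (those for which a dual condition $C_1(L_s+R_{1/2})^*\cap\Lambda_Q^*=\{\0\}$ holds) this linear problem is solved sharply by Minkowski's theorem, giving the expected contribution $\Psi^m(Q)$. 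The set $B_Q$ of ``bad'' $s$ is controlled by the Pjartli sublevel lemma (your nondegeneracy input) together with the hypothesis on $\omega(\bfalpha^T)$; choosing $\theta(Q)=Q^{-(n+\epsilon)/2m}$ and $\delta(q)=q^{-\gamma}$ with $\gamma$ just above $\omega(\bfalpha^T)$ yields $|B_Q|\lesssim Q^{-\epsilon/2m(m-1)}\log^2 Q$. The final bound is thus $|S_Q|\lesssim \Psi^m(Q)+Q^{-\epsilon/2m(m-1)}\log^2 Q$, a sum of two separately summable pieces over $Q\in\QQ$. The exponent in \eqref{NVWA} comes out cleanly from balancing $\theta$ against the polar-lattice condition, not from any direct equidistribution estimate for the Kronecker grid near the curve. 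If you want to make your approach work you would in effect have to reprove this good/bad decomposition; as written, Step~2 is a plan, not a proof.
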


\medskip

\begin{remark}
Note that by Khintchine's transference principle, if $\bfalpha$ is not very well approximable then \eqref{NVWA} holds.
\end{remark}

\begin{remark}
We have restricted our attention to analytic manifolds for convenience, as was done in \cite{Beresnevich_BA}. The reason for this is that we use a Fibering Lemma from \cite{Beresnevich_BA} which was proven for analytic manifolds. Although the proof can be generalized to a larger class of manifolds (e.g. $\CC^\infty$), it is not known what the weakest conditions are that guarantee that such a fibering lemma holds.
\end{remark}

Regarding manifolds of $\bfalpha$-twisted Khintchine type for divergence, we need to introduce a stronger assumption on the approximation functions. A \emph{Hardy $L$-function} is a function that can be expressed using only the elementary arithmetic operations $+,-,\times,\div$, exponents, logarithms, and real-valued constants, and that is well-defined on some interval of the form $(t_0,\infty)$. In what follows the function $\log^{(j)}(q)$ will denote the $j$th iterate of the natural logarithm, which is thus defined and positive on $(t_0,\infty)$ for a sufficiently large $t_0=t_0(j)$.

\def\psii{h_i}

\begin{theorem}[Khintchine type for divergence]
\label{theorem2}
Fix $i\geq 1$  and let $\bfalpha$ be an $m\times n$ matrix which is not $\phi_i$-approximable, where
\begin{align}
\label{phidef}
\phi_i(q) = \psii(q) \log\log(q) \quad {\rm and}  \quad
\psii(q): = \frac{1}{(q^\qdim\prod_{j=1}^i\log^{(j)}(q))^{1/\pdim}}\,.
\end{align}
Then if $\psi$ is a Hardy $L$-function such that \eqref{psiseries} diverges, then $\Twist_\psi(\bfalpha)$ has full measure on any nondegenerate analytic manifold. In other words, every nondegenerate analytic manifold is of $\bfalpha$-twisted Khintchine type for divergence over the class of Hardy $L$-functions.
\end{theorem}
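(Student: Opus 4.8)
The plan is to reduce the statement for nondegenerate analytic manifolds to the case of nondegenerate analytic curves, and then to treat a fixed curve by a divergence Borel--Cantelli (ubiquity) argument whose quantitative hypotheses are supplied by an effective equidistribution estimate for the points $-\bfalpha\qq\bmod 1$ along the curve. First I would invoke the Fibering Lemma of \cite{Beresnevich_BA}: up to a null set any nondegenerate analytic manifold $\MM\subseteq\R^\pdim$ is foliated by a measurably parametrised family of nondegenerate analytic curves along which the induced measure on $\MM$ disintegrates. Since $\Twist_\psi(\bfalpha)$ is Borel and the conclusion is a full-measure statement, Fubini then reduces the theorem to showing that for every nondegenerate analytic $\ff\colon I\to\R^\pdim$ the set $\{\tbase\in I:\ff(\tbase)\in\Twist_\psi(\bfalpha)\}$ is of full Lebesgue measure in $I$.

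Next I would use the hypothesis that $\psi$ is a Hardy $L$-function to regularise it. Any two Hardy $L$-functions are eventually comparable, so $\min(\psi,\psii)$ is again a nonincreasing Hardy $L$-function; since $\sum_q q^{\qdim-1}\psii^\pdim(q)=\sum_q\bigl(q\log q\cdots\log^{(i)}q\bigr)^{-1}=\infty$, it follows that $\sum_q q^{\qdim-1}\min(\psi,\psii)^\pdim(q)=\infty$, and as $\Twist_{\min(\psi,\psii)}(\bfalpha)\subseteq\Twist_\psi(\bfalpha)$ we may assume $\psi\le\psii$ for all large $q$, with $q\mapsto q^{\qdim}\psi^{\pdim}(q)$ eventually monotonic and $\sum_q q^{\qdim-1}\psi^\pdim(q)$ comparable to $\sum_k 2^{k\qdim}\psi^\pdim(2^k)$. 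Writing $B_\qq:=B\bigl(-\bfalpha\qq,\psi(\|\qq\|)\bigr)$ for the corresponding ball in $\R^\pdim/\Z^\pdim$, and using that $\MM$ is a proper submanifold, the target set becomes a genuine limsup,
\[
\{\tbase\in I:\ff(\tbase)\in\Twist_\psi(\bfalpha)\}=\limsup_{\|\qq\|\to\infty}\ff^{-1}(B_\qq).
\]
The decisive point of the reduction $\psi\le\psii$ is that, together with the hypothesis that $\bfalpha$ is not $\phi_i$-approximable, it forces the balls $B_\qq$ with $\|\qq\|\asymp Q$ to be pairwise disjoint for all large $Q$: if $\qq\ne\qq'$ then $\bigl\|(-\bfalpha\qq)-(-\bfalpha\qq')\bigr\|\ge\mathrm{dist}\bigl(\bfalpha(\qq-\qq'),\Z^\pdim\bigr)>\phi_i(\|\qq-\qq'\|)\gg\psii(Q)\log\log Q>2\psi(Q)$.

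The core of the argument is then a divergence Borel--Cantelli / ubiquity theorem for this limsup set in $(I,\mathrm{Leb})$, requiring (a) $\sum_\qq|\ff^{-1}(B_\qq)|=\infty$ and (b) a quasi-independence-on-average bound for the sets $\ff^{-1}(B_\qq)$. After grouping $\qq$ dyadically, both reduce to the localised counting estimate
\[
\#\bigl\{\qq:\|\qq\|\asymp Q,\ \mathrm{dist}(-\bfalpha\qq,\ff(I))\le r\bigr\}\asymp Q^{\qdim}r^{\pdim-1}\qquad(r\asymp\psi(Q)),
\]
together with its two-variable analogue bounding how often $-\bfalpha\qq$ and $-\bfalpha\qq'$ are simultaneously close to the curve and to each other: the diagonal terms then reproduce the divergent series $\sum_k 2^{k\qdim}\psi^\pdim(2^k)\asymp\sum_q q^{\qdim-1}\psi^\pdim(q)$, the within-block off-diagonal terms vanish by the disjointness established above, and the between-block terms are handled by the two-variable estimate, yielding positive measure in every subinterval of $I$; a Lebesgue density argument (the estimates being uniform over subarcs) then upgrades this to full measure, and pushing the conclusion back through the fibering completes the proof. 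The counting estimate itself I would establish by Fourier analysis on $\R^\pdim/\Z^\pdim$: expanding the indicator of an $r$-neighbourhood of $\ff(I)$ into characters produces the main term $Q^{\qdim}r^{\pdim-1}$ (zero frequency) plus a weighted sum of Weyl sums $\sum_{\|\qq\|\asymp Q}\exp(2\pi\mathrm{i}\langle\mathbf k,\bfalpha\qq\rangle)$ over $\mathbf k\ne\0$, with weights equal to the Fourier coefficients of the neighbourhood, i.e. oscillatory integrals along $\ff$; nondegeneracy of the analytic curve makes these integrals decay (van der Corput / stationary phase), while the hypothesis $\|\bfalpha\qq+\pp\|>\phi_i(\|\qq\|)$ keeps the Weyl sums small for small $\mathbf k$, the Hardy regularity of $\psi$ controlling the frequency ranges and the bookkeeping of iterated logarithms.

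The main obstacle will be exactly this last step: obtaining the localised effective equidistribution of $\{-\bfalpha\qq\bmod1\}$ along a nondegenerate curve down to scale $\psi(Q)$, with error terms controlled solely by the iterated-logarithmic condition that $\bfalpha$ is not $\phi_i$-approximable. This is most delicate in the borderline regime $\psi\approx\psii$, where the powers of iterated logarithms built into $\phi_i$ are precisely what must absorb the losses in the Weyl-sum estimates, and where the small but nonzero frequencies $\mathbf k$ — for which one has neither strong oscillatory decay nor a clean Diophantine gap — have to be treated with particular care.
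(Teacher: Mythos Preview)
Your broad skeleton --- fibering to curves, regularising $\psi$ via Hardy comparability, grouping $\qq$ dyadically, and running a divergence Borel--Cantelli with a Lebesgue-density upgrade --- matches the paper. The disjointness observation (that the balls $B_\qq$ with $\|\qq\|\asymp Q$ are $\gg\phi_i(Q)$-separated because $\bfalpha$ is not $\phi_i$-approximable) is exactly the paper's separation subclaim.

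The substantive divergence is the engine you propose for the counting estimate. You want effective equidistribution of $\{-\bfalpha\qq\bmod 1:\|\qq\|\asymp Q\}$ along the curve at scale $\psi(Q)$ via Fourier expansion and Weyl sums. The paper does \emph{not} do this. Instead it linearises the curve at a variable basepoint $\tbase$ (writing $\ff(\tbase+\theta(Q)\tmod)\approx\ff(\tbase)+\theta(Q)\tmod\,\ff'(\tbase)$), recasts the problem as intersecting a short segment with the shifted lattice $\Lambda_Q=g_Q u_\bfalpha\Z^{m+n}$, and uses Minkowski's theory (successive minima, polar duality) to get two-sided estimates $|\TS_{Q,\tbase}(c)|\asymp\Psi^m(Q)$ whenever a dual-lattice condition \eqref{LRdual} holds. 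The set $B_Q$ of bad basepoints where \eqref{LRdual} fails is then bounded directly via a Pjartli-type sublevel estimate along the curve and shown to have measure $o(1)$. No global Weyl-sum bound is ever proved; the argument is pure geometry of numbers, and the quasi-independence estimate is obtained by a three-range case analysis on $\psi(Q_1)$ versus $\phi(Q_2)$ and $\theta(Q_2)$.

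Your Fourier route has a gap you half-acknowledge but understate. The Weyl sum $\sum_{\|\qq\|\asymp Q}e(\langle\mathbf{k},\bfalpha\qq\rangle)=\sum_{\|\qq\|\asymp Q}e(\langle\bfalpha^T\mathbf{k},\qq\rangle)$ is controlled by the distance of $\bfalpha^T\mathbf{k}$ to $\Z^n$, i.e.\ by a Diophantine condition on $\bfalpha^T$, not on $\bfalpha$. The paper does prove a transference lemma (via Khintchine's transference principle) showing that $\bfalpha$ not $\phi_i$-approximable implies $\bfalpha^T$ not $\delta$-approximable, but only for $\delta(Q)=Q^{-m/n}\Phi^\beta(Q)$ with $\beta>(m+n-1)m/n$ --- an iterated-log condition far too weak to make the Weyl sums for small nonzero $\mathbf{k}$ genuinely small. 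With that input the Weyl sum is essentially of size $Q^n$ up to iterated-log factors, and the oscillatory-integral decay along a nondegenerate curve (at best $\|\mathbf{k}\|^{-1/(m-1)}$ by van der Corput) does not compensate: the error swamps the main term $Q^n\psi(Q)^{m-1}$, which is itself only an iterated log below $Q^{n/m}$. So the localised counting estimate you need does not follow from Fourier analysis at this precision; the paper's detour through the bad set $B_Q$ is precisely what replaces it.

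One minor point on the reduction: you replace $\psi$ by $\min(\psi,h_i)$ and hence allow $\psi\ll h_i$. The paper instead shows $\psi\gtrsim h_j$ for some $j$ (using the order of $\psi$ as a Hardy $L$-function) and then takes $\psi=h_i$ exactly after equalising $i=j$. Your reduction is fine for the disjointness step, but working with an unspecified $\psi\le h_i$ would complicate the size and quasi-independence bookkeeping that the paper carries out with the explicit functions $\Phi,\Theta,\Delta$.
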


\begin{remark}
Note in particular that Theorem~\ref{theorem2} applies when $\bfalpha$ is (homogeneously) badly approximable, i.e. when $(\bfalpha,\0) \notin \WW_{c\psi_\noverm}$ for some $c > 0$, where $$
\psi_\noverm(q) \,  =   \, q^{-n/m} \, . $$ On the other hand, by Khintchine's theorem, the set of $\bfalpha$ to which the theorem applies is a Lebesgue nullset, since $\sum_{q=1}^\infty q^{\qdim-1} \phi_i^\pdim(q) = \infty$ for any $i \in \N$.
\end{remark}

We finish with some results regarding  the twisted set of badly approximable vectors. With this in mind, recall that
an $m\times n$ matrix $\bfalpha$ is called \emph{singular} if for all $\epsilon > 0$, there exists $Q_\epsilon \geq 1$ such that for all $Q\geq Q_\epsilon$, there exists $(\pp,\qq)\in\Z^m\times \Z^n$ such that
\[
\|\bfalpha\qq + \pp\| \leq \epsilon Q^{-n/m} \text{ and } 0 < \|\qq\| \leq Q.
\]
The following theorem implies that if $\bfalpha$ is nonsingular, then almost every vector on a nondegenerate analytic manifold is not badly $\bfalpha$-approximable.

\begin{theorem}[Measure of badly $\bfalpha$-approximable points, nonsingular case]
\label{theorem3}
Let $\bfalpha$ be a nonsingular $m\times n$ matrix. Then the set of badly $\bfalpha$-approximable vectors has zero measure on any nondegenerate analytic manifold. In other words, for all $c > 0$, if $\psi = c \psi_\noverm$ then $\Twist_\psi(\bfalpha)$ has full measure on any nondegenerate analytic manifold.
\end{theorem}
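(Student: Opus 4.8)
The plan is to reduce to nondegenerate analytic curves, recast the problem as a recurrence question for a diagonal flow on the space of affine lattices, and use nonsingularity of $\bfalpha$ in the form of recurrence of an associated base orbit.

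\emph{Reductions.} Exactly as in Theorems~\ref{theorem1} and~\ref{theorem2}, the Fibering Lemma of \cite{Beresnevich_BA} foliates a nondegenerate analytic manifold, off a Lebesgue-null set, by nondegenerate analytic curves, and the conclusion is inherited by the manifold once it holds on almost every fibre; so we may assume $\MM=\varphi(I)$ for a nondegenerate analytic curve $\varphi\colon I\to\R^m$. Since $\psi\mapsto\Twist_\psi(\bfalpha)$ is monotone, $\MM\cap\Tad(\bfalpha)=\bigcup_{k\in\N}\bigl(\MM\butnot\Twist_{(1/k)\psi_\noverm}(\bfalpha)\bigr)$, so it suffices to prove that for each fixed $c>0$ the set $\Twist_{c\,\psi_\noverm}(\bfalpha)$ has full Lebesgue measure on $\varphi(I)$.

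\emph{Dynamical translation.} Let $g_t=\mathrm{diag}(e^{t/m}I_m,e^{-t/n}I_n)$, and for $\bfbeta\in\R^m$ let $\Lambda_\bfbeta:=\{(\bfalpha\qq+\bfbeta+\pp,\qq):\pp\in\Z^m,\,\qq\in\Z^n\}$, an affine lattice in $Y:=\mathrm{ASL}_{m+n}(\R)/\mathrm{ASL}_{m+n}(\Z)$ whose base lattice $\Lambda:=\{(\bfalpha\qq+\pp,\qq)\}\in X:=\mathrm{SL}_{m+n}(\R)/\mathrm{SL}_{m+n}(\Z)$ does not depend on $\bfbeta$. A short computation shows that if $g_t\Lambda_\bfbeta$ contains a vector of norm $\le\epsilon$ whose last $n$ coordinates are not all $0$, then $\|\bfalpha\qq+\bfbeta+\pp\|\le\epsilon^{1+n/m}\psi_\noverm(\|\qq\|)$ for the corresponding $(\pp,\qq)\in\Z^m\times(\Z^n\butnot\{\0\})$; hence, if the orbit $\{g_t\Lambda_\bfbeta:t\ge0\}$ meets the \emph{fixed} target
\[
\mathcal E_\epsilon:=\{\Delta\in Y:\exists\,v\in\Delta,\ \|v\|\le\epsilon,\ \text{the last }n\text{ coordinates of }v\text{ are not all }0\}
\]
along an unbounded set of times, then $\bfbeta\in\Twist_{c\,\psi_\noverm}(\bfalpha)$ with $c=\epsilon^{1+n/m}$ --- provided the resulting solutions are infinitely many and distinct, which can fail only on the null set of $\bfbeta$ for which $\bfalpha\qq+\bfbeta\in\Z^m$ is solvable, and which we discard. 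The point of $\mathcal E_\epsilon$ being of fixed size is that it occupies a fixed positive proportion of the fibre of $Y\to X$ over every point of any given compact $\mathcal K\subset X$. Finally, by Dani's correspondence, $\bfalpha$ is nonsingular precisely when the base orbit $\{g_t\Lambda:t\ge0\}$ does not diverge in $X$: there exist a compact $\mathcal K\subset X$ and times $t_k\to\infty$ with $g_{t_k}\Lambda\in\mathcal K$.

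\emph{Equidistribution and Borel--Cantelli.} Since $\varphi$ is nondegenerate analytic, the plan is to show that along a subsequence of the $t_k$ the pushforwards of normalised Lebesgue measure on $I$, or on any subinterval $J\subseteq I$, under $s\mapsto g_{t_k}\Lambda_{\varphi(s)}$ put mass $\ge\delta(\epsilon)>0$ on $\mathcal E_\epsilon$ for all large $k$ --- a fibred, affine-lattice analogue of Shah's equidistribution theorem for expanding translates of nondegenerate curves, the lower bound coming from the recurrence of the base orbit to $\mathcal K$ together with the fact that $\mathcal E_\epsilon$ occupies a fixed proportion of every fibre over $\mathcal K$. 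Writing $A_k:=\{s\in I:g_{t_k}\Lambda_{\varphi(s)}\in\mathcal E_\epsilon\}$, this gives $|A_k\cap J|\ge\delta(\epsilon)|J|$ for every subinterval $J$ and all large $k$. One then needs quasi-independence of the $A_k$ on average --- obtainable from an effective (spectral-gap) form of this equidistribution, or from a direct second-moment estimate of the kind used in the divergence theory of Khintchine-type theorems on manifolds --- whence the divergence Borel--Cantelli lemma gives $|\limsup_k A_k\cap J|\ge\delta'|J|$ for all $J$ with a uniform $\delta'>0$, and a Lebesgue-density argument promotes this to $|\limsup_k A_k|=|I|$. By the dynamical translation, $\varphi(s)\in\Twist_{c\,\psi_\noverm}(\bfalpha)$ for almost every $s$; taking $\epsilon=1/k$ and undoing the fibering completes the proof.

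\emph{Main obstacle.} The crux is the equidistribution input together with its effective upgrade: one must control the $g_t$-translates of a nondegenerate analytic curve \emph{inside the fibre over the recurrent base orbit} of a nonsingular matrix, strongly enough to run the quasi-independence/Borel--Cantelli argument. This is the twisted counterpart of the non-divergence and equidistribution machinery behind the classical Khintchine-type theory on manifolds, and presumably the same mechanism driving Theorem~\ref{theorem2}; what makes bare nonsingularity the right hypothesis here --- in place of the quantitative non-escape encoded by ``not $\phi_i$-approximable'' --- is precisely that for $\psi=c\,\psi_\noverm$ the target $\mathcal E_\epsilon$ is of fixed, rather than shrinking, size, so that mere recurrence of the base orbit is enough. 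Secondary points are verifying that the limiting fibre measure genuinely charges $\mathcal E_\epsilon$ uniformly over $\mathcal K$, and the harmless bookkeeping, already noted, needed to guarantee infinitely many distinct solutions.
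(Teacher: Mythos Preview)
Your reduction to curves and the observation that for $\psi=c\,\psi_{n/m}$ the target has \emph{fixed} rather than shrinking size are both correct and match the paper. However, the strategy you build on top of this is substantially heavier than necessary and contains a logical redundancy.

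The redundancy first: you claim the density bound $|A_k\cap J|\ge\delta(\epsilon)|J|$ for every subinterval $J$ and all large $k$, and then invoke quasi-independence and the divergence Borel--Cantelli lemma to deduce $|\limsup_k A_k\cap J|\ge\delta'|J|$. But this second step is superfluous: if $|A_k\cap J|\ge\delta|J|$ for infinitely many $k$, then $|\bigcup_{k\ge N}A_k\cap J|\ge\delta|J|$ for every $N$, so $|\limsup_k A_k\cap J|\ge\delta|J|$ directly. No quasi-independence, no effective equidistribution, no Borel--Cantelli --- only the density lemma (Lemma~\ref{lemmafullmeasure}) to promote to full measure. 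This is exactly how the paper finishes.

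As for the density lower bound itself, the paper obtains it by elementary geometry of numbers rather than any Shah-type equidistribution. One linearises $\ff$ at a scale $\theta(Q)$ and, via the duality principle for Minkowski minima (Lemma~\ref{lemma1}), reduces the bound $|S_Q\cap I|\gtrsim\Psi^m(Q)|I|$ to showing that a ``bad'' set $B_Q\subset I_0$ (where the dual lattice $\Lambda_Q^*$ meets a certain small box) has measure $o(1)$. This follows from a direct lattice-point count (Lemma~\ref{lemmasummary}) together with Pyartli's sublevel estimate (Lemma~\ref{lemmanondegenerate}), once one knows that $\Lambda_Q^*$ avoids a box of \emph{fixed} size $\epsilon$ for infinitely many $Q\in\QQ$ --- and this is exactly the content of nonsingularity (Lemma~\ref{lemmanonsingular}). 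Since here $\Psi(Q)\equiv c$, the bound reads $|S_Q\cap I|\gtrsim c^m|I|$ for infinitely many $Q$, and one concludes as above.

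So the equidistribution input you flag as the ``main obstacle'' is one the paper sidesteps entirely, and the effective/second-moment upgrade you propose is not needed even within your own outline.
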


Note that in the case where the nondegenerate analytic manifold is the whole space, Theorem~\ref{theorem3} follows from \cite[Theorem 3.4]{Shapira} due to Shapira. More recently Moshchevitin \cite{Moshchevitin5} has proven a generalization of Shapira's result in the setting where $n=1$ and $\bfalpha_1,\bfalpha_2,\ldots$ is a ``well-distributed'' sequence of points in $\R^m$, showing that $\liminf_{k\to\infty} k^{1/m} \min_{\pp\in\Z^m}\|\bfalpha_k - \pp - \bfbeta\| = 0$ for Lebesgue-a.e. $\bfbeta\in \R^m$. This yields Shapira's result in the special case where $\bfalpha_k = k \bfalpha$. Even more recently Taehyeong Kim \cite{Kim2023} has re-proved Shapira's result and established a quantitative improvement using ubiquity. We would like to stress that all of these results are for the whole space $\R^\pdim$, whereas our results are for arbitrary nondegenerate analytic manifolds.

By contrast, if we strengthen the requirement of singularity slightly, then almost every vector on a nondegenerate analytic manifold  becomes badly $\bfalpha$-approximable. Specifically, an $m\times n$ matrix $\bfalpha$ is called \emph{very singular} if there exist $\epsilon > 0$ and $Q_\epsilon \geq 1$ such that for all $Q\geq Q_\epsilon$, there exists $(\pp,\qq)\in\Z^m\times\Z^n$ such that
\begin{equation}
\label{verysingulardef}
\|\bfalpha\qq + \pp \| \leq Q^{-(n/m + \epsilon)} \text{ and } 0 < \|\qq\| \leq Q.
\end{equation}
The Hausdorff dimension of the very singular matrices is the same as the Hausdorff dimension of the singular matrices \cite{DFSU_singular}.

\begin{theorem}[Measure of badly $\bfalpha$-approximable points, very singular case]
\label{theorem5}
Let $\bfalpha$ be a very singular $m\times n$ matrix. Then the set of badly $\bfalpha$-approximable vectors has full measure on any nondegenerate analytic manifold.
\end{theorem}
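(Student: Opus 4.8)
The plan is to reduce the statement to a convergence Borel--Cantelli estimate and extract the needed cancellation from the very singular hypothesis. First, since $\R^\pdim\butnot\Tad(\bfalpha)=\bigcap_{c>0}\Twist_{c\psi_\noverm}(\bfalpha)\subseteq\Twist_{\psi_\noverm}(\bfalpha)$, where $\psi_\noverm(q)=q^{-\qdim/\pdim}$, it suffices to show that $\Twist_{\psi_\noverm}(\bfalpha)$ has measure zero on $\MM$. Write $\Twist_{\psi_\noverm}(\bfalpha)=\limsup_\qq\Delta_\qq$ with $\Delta_\qq=\bigcup_{\pp\in\Z^\pdim}B(-\bfalpha\qq-\pp,\|\qq\|^{-\qdim/\pdim})$, a union of balls of which, for each $\qq$, only $O(1)$ meet the bounded set $\MM$. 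I would then reduce, via the Fibering Lemma of \cite{Beresnevich_BA} (available because $\MM$ is analytic), to the case of a nondegenerate analytic curve $\ff\colon I\to\R^\pdim$, so that the goal becomes $|\{t\in I:\ff(t)\in\limsup_\qq\Delta_\qq\}|=0$. Grouping the indices $\qq$ into dyadic blocks $2^j\le\|\qq\|<2^{j+1}$ and setting $A_j=\bigcup_{2^j\le\|\qq\|<2^{j+1}}\Delta_\qq$, it is enough to prove $\sum_j|\ff^{-1}(A_j)|<\infty$.

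The naive bound fails here, since $\sum_\qq\|\qq\|^{-\qdim/\pdim}=\infty$; the point is that very singularity forces the balls comprising $A_j$ to cluster. Fix $j$, put $Q=2^j$ and $r=Q^{-\qdim/\pdim}$, and apply \eqref{verysingulardef} with parameter $Q^{1-\theta}$ for a suitable small $\theta>0$ to obtain a nonzero $\qq_\ast\in\Z^\qdim$ with $\|\qq_\ast\|\le Q^{1-\theta}$ and $\mathrm{dist}(\bfalpha\qq_\ast,\Z^\pdim)$ small enough that $\mathrm{dist}(\bfalpha(t\qq_\ast),\Z^\pdim)\le r$ for all integers $|t|\lesssim Q^{\theta_0}$, where an elementary optimisation over $\theta$ gives $\theta_0=\epsilon/(1+\qdim/\pdim+\epsilon)>0$. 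Hence the centres $\{-\bfalpha\qq\bmod\Z^\pdim:2^j\le\|\qq\|<2^{j+1}\}$ are covered by $\lesssim Q^{\qdim-\theta_0}$ balls of radius $O(r)$, each absorbing the $\gtrsim Q^{\theta_0}$ translates $\qq+t\qq_\ast$ lying in the block. Since $|\ff^{-1}(B)|\lesssim\rho$ for every ball $B$ of radius $\rho$ — uniformly, using analyticity and nondegeneracy of $\ff$ to bound the number of connected components of the preimage — this yields $|\ff^{-1}(A_j)|\lesssim Q^{\qdim-\theta_0}\cdot r=Q^{\qdim-\theta_0-\qdim/\pdim}$. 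The main obstacle is that this exponent need not be negative: $\theta_0$ is proportional to $\epsilon$, whereas the codimension penalty $\qdim(\pdim-1)/\pdim$ is a fixed positive constant, so for small $\epsilon$ the series does not converge on the strength of this estimate alone.

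Overcoming this is the real content of the theorem, and I see two routes. The first uses not a single good relation per scale but the whole sequence $\qq_1,\qq_2,\ldots$ of best approximations to $\bfalpha$: very singularity is equivalent to $\mathrm{dist}(\bfalpha\qq_k,\Z^\pdim)\le\|\qq_{k+1}\|^{-\qdim/\pdim-\epsilon}$ for all large $k$, and feeding these in at all scales at once should show that $\bfalpha\qq\bmod\Z^\pdim$ is, at every finite scale $Q$, essentially as concentrated as it would be for a rational matrix of denominator $\lesssim Q$ — clustering strong enough to beat any fixed codimension. The second route keeps only the clustered centres lying within $r$ of $\ff(I)$: if very singularity forces the number of $\qq$ with $\|\qq\|\le Q$ and $\mathrm{dist}(\bfalpha\qq+\ff(t),\Z^\pdim)\le r$ for some $t\in I$ to be $\lesssim Q^{\qdim}r^{\pdim-1}$, the heuristically expected count, then $|\ff^{-1}(A_j)|\lesssim Q^{\qdim-\theta_0}\,r^{\pdim-1}\cdot r=Q^{-\theta_0}$, which is summable. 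Either way, the crux — and what I expect to be the hardest step — is to turn ``$\bfalpha$ very singular'' into a genuinely strong structural statement about $\{\bfalpha\qq\bmod\Z^\pdim:\qq\in\Z^\qdim\}$: concentration near a fixed proper rational affine subvariety of the torus (as for rational $\bfalpha$, where this set is finite, or for $\bfalpha$ lying over a rational line, where $\Twist_{\psi_\noverm}(\bfalpha)$ is itself contained in that line), or, more generally, near-rationality at every scale. Granted such a statement, the measure estimate on $\MM$ follows from the non-planarity of nondegenerate analytic manifolds — a proper affine subvariety meets such a manifold in a null set — together with the clustering bookkeeping above.
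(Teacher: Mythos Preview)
Your proposal has a genuine gap: you correctly locate the crux --- turning ``$\bfalpha$ very singular'' into a structural concentration statement strong enough to beat the codimension --- but you do not prove it. Both of your suggested routes are heuristics (``should show'', ``if very singularity forces''), and the first paragraph's clustering argument, as you yourself note, gives an exponent $\qdim(1-1/\pdim)-\theta_0$ which is positive for small $\epsilon$. So at present there is no proof.

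The missing idea is \emph{duality}. Rather than trying to aggregate many primal relations $\bfalpha\qq_\ast\approx\Z^\pdim$ into a clustering of the orbit on the torus, pass to the dual lattice $\Lambda_Q^*=((g_Q u_\bfalpha)^T)^{-1}\Z^{\pdim+\qdim}$. Very singularity of $\bfalpha$ gives $\lambda_1(\Lambda_Q)\lesssim Q^{-\delta}$, and Minkowski's second theorem plus the duality principle for successive minima then yield $\lambda_1(\Lambda_Q^*)\lesssim Q^{-\delta'}$ for some $\delta'>0$. A short dual vector $\rr=((g_Q u_\bfalpha)^T)^{-1}(\pp,\qq)$ has $\pp\in\Z^\pdim\butnot\{\0\}$, and for \emph{every} $\bfbeta\in A_Q$ one computes $\pp\cdot\bfbeta=\rr\cdot(\ss+\tt)$ with $\ss\in\Lambda_Q$ and $\tt$ bounded, whence $\pp\cdot\bfbeta\in\NN(\Z,CQ^{-\delta'})$. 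This is exactly the ``concentration near a proper rational affine subvariety'' you were hoping for --- $A_Q$ lies in a $Q^{-\delta'}$-neighbourhood of the hyperplane family $\{\pp\cdot\bfbeta\in\Z\}$ --- obtained in one stroke, not by iterating best approximations. On the curve this reduces $\ff^{-1}(A_Q)$ to a sublevel set $\{t:\pp\cdot\ff(t)\in\NN(\Z,CQ^{-\delta'})\}$; nondegeneracy bounds the number of monotonicity intervals of $t\mapsto\pp\cdot\ff(t)$ independently of $\pp$, and Lemma~\ref{Pjartli} then gives $|\ff^{-1}(A_Q)|\lesssim Q^{-\delta'/(\pdim-1)}$, which is summable over the dyadic $Q$.
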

In the non-manifold case, Theorem~\ref{theorem5}  can be deduced as a consequence of the main theorem in \cite{BugeaudLaurent05} and the  Khintchine type transference inequalities of Jarn\'ik and Apfelbeck, see \cite[Equation~(6)]{BugeaudLaurent05}.

\medskip
\begin{remark}
It is worth mentioning that our theorems essentially resolve Problem~B and makes progress towards Problems~A and C.
In particular, since $\FF_{\rm E}$ is a subclass of the class of doubling functions,  Theorem~\ref{theorem1}  answers Problem C for any $\bfalpha$ subject to \eqref{NVWA}. Note that the set of  such $\bfalpha$ is  a significantly larger set than that of badly approximable points required within the context of Problem~A. Indeed, the former is of full measure while the latter is null.
\end{remark}
\medskip

Finally, rather than asking about the measure of the set of badly $\bfalpha$-approximable vectors, we can ask about its Hausdorff dimension. In this direction we have the following result.

\begin{theorem}[Dimension of badly $\bfalpha$-approximable points]
\label{theorem4}
Let $\bfalpha$ be a badly approximable $m\times n$ matrix. Then the set of badly $\bfalpha$-approximable vectors is absolute winning and thus has full dimension on any $C^1$ manifold.
\end{theorem}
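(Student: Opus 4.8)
The plan is to show that $\Tad(\bfalpha)\subseteq\R^\pdim$ is \emph{hyperplane absolute winning} (HAW) — this being the form of the absolute winning property that is inherited by $C^1$ submanifolds. Granting this, the rest of the theorem is standard: HAW sets are winning for Schmidt's game, the HAW property passes to any $C^1$ submanifold $\MM$ (so $\Tad(\bfalpha)\cap\MM$ is HAW, hence winning, relative to $\MM$), and a winning subset of a manifold has full Hausdorff dimension. Moreover a superset of an HAW set is HAW, so it suffices to fix one badly‑approximable constant $c_0>0$ for $\bfalpha$ — i.e.\ $\|\bfalpha\qq+\pp\|\geq c_0\|\qq\|^{-\noverm}$ for all $(\pp,\qq)\in\Z^\pdim\times(\Z^\qdim\butnot\{\0\})$ — together with any $c\in(0,c_0]$, and to prove that the truncated set
\[
\Tad_c(\bfalpha):=\big\{\bfbeta\in\R^\pdim:\ \|\bfalpha\qq+\bfbeta+\pp\|\geq c\|\qq\|^{-\noverm}\ \ \text{for all }(\pp,\qq)\in\Z^\pdim\times(\Z^\qdim\butnot\{\0\})\big\}
\]
is HAW; indeed $\Tad_c(\bfalpha)\subseteq\Tad(\bfalpha)$ for every $c>0$, and $\0\in\Tad_c(\bfalpha)$ whenever $c\le c_0$.

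The complement of $\Tad_c(\bfalpha)$ is the union of the closed \emph{resonant balls} $B_{\pp,\qq}$, centred at $-\bfalpha\qq-\pp$ and of radius $c\|\qq\|^{-\noverm}$. The single place where the hypothesis on $\bfalpha$ is used is the following separation estimate, which is immediate from badly approximable: if $\|\qq\|,\|\qq'\|\le T$ and $(\pp,\qq)\neq(\pp',\qq')$, then the centres of $B_{\pp,\qq}$ and $B_{\pp',\qq'}$ are at distance $\geq c_0(2T)^{-\noverm}$ — apply the Diophantine inequality to $\qq-\qq'$ when $\qq\neq\qq'$, and otherwise the two centres differ by a nonzero integer vector. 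Equivalently: for every $\bfbeta_0\in\R^\pdim$ and every $\rho>0$, the number of resonant balls of radius $\ge\rho$ whose centre lies within $\rho$ of $\bfbeta_0$ is at most a constant $D=D(\pdim,\qdim,c/c_0)$. This ``uniformly bounded count of resonant balls at every scale'' is precisely the hypothesis of the by‑now standard sufficient condition for HAW‑ness of a complement of balls — the same mechanism by which one shows that the classical badly approximable sets are HAW.

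Concretely, I would have Alice play the hyperplane absolute game with a small fixed parameter $\beta$, maintaining after her $k$-th move the invariant that Bob's ball $B_k$ is disjoint from every resonant ball of radius exceeding $\mu\cdot\mathrm{rad}(B_k)$, for a suitable $\mu=\mu(\beta,D)$. When Bob shrinks $B_k$ to $B_{k+1}$, only $O(D)$ resonant balls can newly threaten the invariant, each much smaller than $\mathrm{rad}(B_{k+1})$; Alice removes, in turn, thin slabs — $\epsilon\cdot\mathrm{rad}(B_k)$-neighbourhoods of affine hyperplanes through their centres, with $\epsilon\le\beta$ — each engulfing one such ball, the bounded‑count estimate ensuring that no backlog accumulates over consecutive moves. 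With $c/c_0$ and $\beta$ chosen small enough for all the constraints to be compatible, the outcome $\bigcap_k B_k$ avoids every resonant ball and so lies in $\Tad_c(\bfalpha)$, proving that $\Tad_c(\bfalpha)$ — hence $\Tad(\bfalpha)$ — is HAW.

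The step requiring the most care is this last calibration — choosing $c/c_0$, $\beta$, $\mu$ and the slab widths so that ``$O(D)$ new threats per move'', the restriction $\epsilon\le\beta$, the base case for Bob's arbitrary first ball, and the propagation of the invariant all hold simultaneously — but it is routine bookkeeping, exactly as in the proof that $\Bad$ is HAW, and involves no conceptual difficulty. The one genuinely essential input is the separation estimate, and that is precisely where ``$\bfalpha$ badly approximable'' is indispensable: if $\bfalpha$ failed to be badly approximable the resonant balls at a common scale could cluster without bound and the bounded‑count hypothesis, hence the whole argument, would break down. (An alternative, less hands‑on route is via the Dani correspondence: $\bfbeta\in\Tad(\bfalpha)$ precisely when the $g_t$-orbit of the affine unimodular lattice attached to $(\bfalpha,\bfbeta)$ remains in a fixed compact set — this orbit being forward bounded exactly because $\bfalpha$ is badly approximable — whence $\Tad(\bfalpha)$ is a slice of a bounded‑orbit set for a diagonal flow, known to be HAW; but the direct argument above keeps the role of the hypothesis most visible.)
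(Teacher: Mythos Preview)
Your proposal is correct, and the separation estimate you isolate is exactly the crux of the paper's argument as well. However, your route differs from the paper's in two respects worth noting.

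First, the paper plays McMullen's \emph{absolute game} (Alice deletes balls), not the hyperplane absolute game. Its key lemma is sharper than your bounded-count statement: with the choice $c=\tfrac12\epsilon$ (your $c_0=\epsilon$), for every $Q$ and every $\bfbeta$ there is \emph{at most one} pair $(\pp,\qq)$ with $\|\bfalpha\qq+\pp+\bfbeta\|\le\tfrac12\epsilon Q^{-n/m}$ and $\|\qq\|\le\tfrac12 Q$. This ``exactly one threat per scale'' makes Alice's strategy a one-liner --- she simply deletes the ball around the unique dangerous centre --- and eliminates all the queueing/bookkeeping you defer. Your bounded count $D=D(m,n,c/c_0)$ is of course correct, and if you take $c/c_0$ small enough you recover $D=1$ and the paper's clean picture; but the paper's formulation makes the role of the hypothesis on $\bfalpha$ maximally transparent.

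Second, the paper does not pass through the inheritance of HAW to $C^1$ submanifolds. Having proved absolute winning on $\R^\pdim$, it invokes a lemma (from \cite{BGSV}) to the effect that an absolute winning set meets every closed Ahlfors-regular subset in full dimension, and applies this with the Ahlfors-regular set taken to be (a compact piece of) the manifold. Your approach buys the formally stronger HAW conclusion and the full inheritance package; the paper's approach buys a shorter, more self-contained argument that avoids the HAW machinery entirely. Both are valid proofs of the stated theorem.
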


Here ``absolute winning'' means winning for McMullen's ``absolute game'', see Appendix~\ref{sectionappendix}.  The theorem  strengthens the  result of Bengoechea, Moshchevitin, and Stepanova \cite{BMS} who proved the statement  for Schmidt's game rather than the absolute game.  Their proof can easily be adapted to prove the stronger absolute winning result. For completeness, we give a proof of Theorem~\ref{theorem4}  in Appendix~\ref{sectionappendix}.

\medskip

\begin{remark}
We emphasize that, in Theorem~\ref{theorem4}, the manifolds are not required to be nondegenerate, in contrast with the assumptions imposed in the previous results. In fact, the framework of the General Problem and its associated subproblems naturally extends to degenerate manifolds, including affine subspaces. We also note that nondegeneracy is not a prerequisite for progress in a number of problems in the classical (non-twisted) setting. For instance, it is not required for establishing a lower bound on the dimension of very well approximable points obtained in \cite{MR3731303}, which holds for all $C^2$ manifolds. Moreover, the classical theory encompasses affine subspaces (which are necessarily degenerate) satisfying suitable Diophantine conditions, as well as manifolds that are nondegenerate relative to such subspaces; see \cite{MR4120305,MR4706444,Kleinbock1} and the references within.
These observations raise the fundamental question of the precise role played by nondegeneracy in the twisted setting. Addressing this question appears to be a natural and interesting direction for further investigation. In our proofs, nondegeneracy manifests itself through the measure estimate \eqref{estimatesum}, the analysis of which leads to a specific line of investigation. For degenerate manifolds, this estimate will differ substantially, requiring a fundamentally different analytical approach. We hope to return to this problem in future work.
\end{remark}

\section{Preliminaries}
\label{prelim}

To begin with, we introduce various pieces of useful and relatively standard notation.

\medskip

\begin{notation}~ \label{not1}
\begin{itemize}
\item $A \lesssim B$ means that there exists a constant $K$ (called the \emph{implied constant}) such that $A \leq KB$, and $A \asymp B$ means that $A \lesssim B \lesssim A$.
\item $A+B$ denotes the Minkowski sum of two sets $A$ and $B$, i.e.
\[
A + B = \{a + b : a\in A, \; b\in B\}.
\]
Similarly, if $A \subset \R$ and $R\subset \R^d$, then $AR = \{t\vv : t\in A, \vv\in R\} \subset \R^d$.
\item If $R_1$ and $R_2$ are convex, centrally symmetric regions then $R_1 \lesssim R_2$ means that there exists a constant $C$ such that $R_1 \subset C R_2$.
\item If $R\subset \R^d$ is a convex, centrally symmetric region then its \emph{polar region} is
\[
R^* = \{\dd\in \R^d : |\xx\cdot\dd| \leq 1 \all \xx\in R\}
\]
and if $\Lambda \leq \R^d$ is a lattice then its \emph{polar lattice} is
\[
\Lambda^* = \{\dd\in\R^d : \xx\cdot\dd\in \Z \all \xx\in \Lambda\}.
\]
\item $\lambda_i(R;\Lambda)$ denotes the $i$th Minkowski minimum of a convex, centrally symmetric region $R$ with respect to $\Lambda$. Note that the duality principle for Minkowski minima states that $\lambda_i(R^*,\Lambda^*) \asymp \lambda_{d+1-i}(R,\Lambda)$ \cite[Theorem VIII.5.VI]{Cassels3}.

\item \smallskip
$|A|$ denotes the Lebesgue measure of a set $A$. If $A$ is a subset of a manifold in $\R^m$, $|A|$ denotes the Hausdorff measure of $A$ in the dimension of the manifold. Equivalently, $|A|$ denotes the pushforward Lebesgue measure of $A$ with respect to some fixed parameterisation.
\end{itemize}
\end{notation}

The following remarks allow us to make some useful additional assumptions when proving the main theorems.

\begin{remark}
\label{remark1}
In the proofs of Theorems \ref{theorem1}--\ref{theorem3}, we can without loss of generality assume that the manifolds in question are curves. Indeed, suppose that the theorems are true for curves, and let $M$ be a nondegenerate analytic manifold. By \cite[The Fibering Lemma]{Beresnevich_BA}, every element of $M$ has a neighbourhood that can be fibered as the disjoint union of nondegenerate analytic curves. By assumption, the conclusions of the theorems are true for almost every point on each of these curves, and thus by Fubini's theorem they are true for almost every point in the neighbourhood under consideration. By covering the manifold with such neighbourhoods, one sees that almost every point on the manifold has the desired properties.

We can make a further reduction as follows. Let $\ff:I_0\to \CC \subset \R^m$ be a parameterization of a nondegenerate curve, where $I_0$ is a closed interval. By definition, this means that $\CC$ is nondegenerate at almost every point. Therefore, for the purpose of the proofs of Theorems \ref{theorem1}--\ref{theorem3} we can assume without loss of generality that $\CC$ is nondegenerate at every point. Furthermore,  the Wronskian of $\ff'$,
\[
\det[\ff'(t),\dots, \ff^{(m)}(t)],
\]
is non-zero except at a finite set of points. The latter can be seen, for example, as a simple modification of \cite[Lemma~3]{MR1387861}.  Since we are looking to establish ``for almost everywhere''  results, we can ignore small neighbourhoods of these singularities and assume that the Wronskian of $\ff'$ is bounded from below by a fixed constant. We can also assume without loss of generality that $\ff^{(k)}$ is bounded on $I_0$ for every $1\le k\le m$.

We note that the assumption of analyticity is only used in the application of The Fibering Lemma from  \cite{Beresnevich_BA} and otherwise is not necessary.
\end{remark}

\begin{remark}  \label{svvb}
If
$\ff:I_0\to \CC \subset \R^m$ is the  parametrization of a curve $\CC=\CC_{\ff}$, and $A\subset\CC$, then the pushforward Lebesgue measure of $A$ will be the Lebesgue measure of $\ff^{-1}\big(A\big)$. In particular the pushforward Lebesgue measure of the set of interest
$$ \CC_{\ff}  \cap  \Twist_\psi(\bfalpha) $$
will be understood as the Lebesgue measure of $\ff^{-1}\big(\Twist_\psi(\bfalpha)\big) $. Note that despite the fact that the pushforward Lebesgue measure depends on the choice of the parametrization $\ff$, the null sets are independent of the choice. In other words, the measures arising from the different parametrization choices are equivalent.
\end{remark}

\begin{remark}\label{remarkij}
In the proof of Theorem~\ref{theorem2}, we can without loss of generality suppose that $\psi = \psii$. Indeed, let $\psi$ be a Hardy $L$-function such that \eqref{psiseries} diverges. Then upon using standard facts concerning  Hardy $L$-functions, it can be shown that there exists $j$ such that $\psi \gtrsim \psi_j$. To see this, let $j$ be the order of $\psi$ as a Hardy $L$-function (see e.g. \cite[p.24]{Hardy} for the definition). Then $\psi/\psi_j$ is of order $\leq j$, and thus by \cite[Theorem 3]{Hardy2}, we have either $\psi/\psi_j \gtrsim 1$ or else $\psi(q)/\psi_j(q) \lesssim (\log^{(j)}(q))^{-\delta}$ for some $\delta > 0$. In the latter case \eqref{psiseries} converges, so the former case holds and thus $\psi \gtrsim \psi_j$. By increasing $i$ or $j$ as necessary, we may without loss of generality suppose that $i = j$.
\end{remark}

\begin{remark}
\label{remark2}
In the proofs of Theorems \ref{theorem1}--\ref{theorem3}, we can without loss of generality assume that the approximation function $\psi$ satisfies $\psi_{\doubleast}(q) := (q^n \log^2(q))^{-1/m} \lesssim \psi(q) \leq \psi_\noverm(q) := q^{-\qdim/\pdim}$ for all sufficiently large $q$. Indeed, for Theorem~\ref{theorem1}, since \eqref{psiseries} converges and $\psi$ is decreasing, we have that
\[
Q^n \psi^m(Q) \asymp \sum_{q\leq Q} q^{n-1} \psi^m(Q) \leq C \df \sum_{q=1}^\infty q^{n-1} \psi^m(q) < \infty
\]
and thus $\psi(q) \lesssim \psi_\noverm(q)$ for all $q$. Conversely, we can replace $\psi$ by $\max(\psi,\psi_{\doubleast})$ without changing whether the series \eqref{psiseries} converges. For Theorem~\ref{theorem2}, by Remark \ref{remarkij} we can take $\psi = \psii$, and then $\psi_{\doubleast}(q) \leq \psi(q) \leq \psi_\noverm(q)$ for all sufficiently large $q$. Finally, for Theorem~\ref{theorem3} we can without loss of generality suppose that $c \leq 1$, and this implies $\psi_{\doubleast} \leq \psi \leq \psi_\noverm$.
\end{remark}

The following standard lemma will be used in the proof of Theorems \ref{theorem2} and \ref{theorem3}, for example see
\cite[Proposition~1]{BDV}.

\begin{lemma}
\label{lemmafullmeasure}
Let $S \subset I_0$ be a Lebesgue measurable set and suppose that for some constant $0<c\leq 1$, for every sufficiently small interval $I \subset I_0$, we have that
\[
|S\cap I| \ge c |I|.
\]
Then $S$ has full measure in $I_0$.
\end{lemma}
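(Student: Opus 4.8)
The plan is to argue by contradiction using the Lebesgue density theorem. Suppose $S$ does not have full measure in $I_0$, and set $E := I_0 \setminus S$, so that $E$ is measurable with $|E| > 0$. Unwinding the hypothesis, fix $\delta > 0$ such that every interval $I \subset I_0$ with $|I| \le \delta$ satisfies $|S \cap I| \ge c|I|$; since $S$ and $E$ partition $I_0$, this is the same as $|E \cap I| \le (1-c)|I|$ for all such $I$.

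Next I would apply the Lebesgue density theorem to the measurable set $E$: almost every point of $E$ is a density point of $E$. Because $|E| > 0$, and because the two endpoints of $I_0$ form a null set, we may choose a density point $x_0 \in E$ lying in the interior of $I_0$. By definition of density point, the intervals $I_k := [x_0 - 1/k,\, x_0 + 1/k]$ (which lie in $I_0$ and have $|I_k| \to 0$ once $k$ is large) satisfy $|E \cap I_k| / |I_k| \to 1$ as $k \to \infty$. The contradiction is now immediate: for all sufficiently large $k$ we have both $|I_k| \le \delta$ and $|E \cap I_k| > (1-c)|I_k|$, which violates the inequality $|E \cap I_k| \le (1-c)|I_k|$ derived above (here we use $c > 0$). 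Hence $|E| = 0$, i.e. $S$ has full measure in $I_0$.

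I do not expect any real obstacle here; this is a routine density argument, and the only points requiring a little care are stating the ``sufficiently small interval'' quantifier precisely and choosing the density point in the interior of $I_0$ so that the shrinking intervals genuinely sit inside $I_0$. If one prefers to avoid invoking the Lebesgue density theorem, an alternative is a direct Vitali covering argument: cover $E$ up to an arbitrarily small error by a disjoint countable family of intervals $I_j \subset I_0$ with $|I_j| \le \delta$, and sum the bounds $|E \cap I_j| \le (1-c)|I_j|$ to conclude $|E| \le (1-c)|E| + \varepsilon$ for every $\varepsilon > 0$, forcing $|E| = 0$. Either route is short; I would present the density-theorem version as the main argument and perhaps remark on the covering alternative.
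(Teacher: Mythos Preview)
Your proof is correct and entirely standard; the Lebesgue density argument works exactly as you describe, and the Vitali covering variant is a perfectly good alternative. Note that the paper does not actually prove this lemma: it is stated as a known fact with a reference to \cite[Proposition~1]{BDV}, so there is no ``paper's own proof'' to compare against beyond the fact that your density-theorem route is precisely the textbook argument one would expect behind such a citation.
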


We will also use the following version of the divergence Borel-Cantelli lemma, which can be found as Lemma~GDBC in \cite{BHV24} stated for the uniform probability measure on an interval~$I$.

\begin{lemma}
\label{GDBC}
Let $I\subset\mathbb{R}$ be an interval and let $\{S'_Q\}_{Q\in\QQ}$ be a sequence of Lebesgue measurable subsets of $\R$ indexed by a countable set $\QQ$. Suppose that there exist constants $C>0$ and $c>0$ and a sequence of finite subsets $\QQ_k\subset\QQ\cap[k,\infty)$ such that
\begin{equation}\label{eqn04}
\sum_{Q\in\QQ_k}\frac{|S'_Q\cap I|}{|I|} \ge c
\end{equation}
and
\begin{equation} \label{eqn05}
\sum_{\substack{Q_1<Q_2\\ Q_1,Q_2\in\QQ_k}} \frac{|S'_{Q_1}\cap S'_{Q_2} \cap I|}{|I|} \ \le \  C\,  \left(\sum_{Q\in\QQ_k}\frac{|S'_{Q}\cap I|}{|I|}\right)^2
\end{equation}
{for all sufficiently large $k\in\N$.}
Then
$$
\frac{|\limsup_{Q\to\infty} S'_Q\cap I|}{|I|} \ge \frac{1}{2C+c^{-1}}\,.
$$
\end{lemma}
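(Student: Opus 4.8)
The plan is to run the standard second-moment (Paley--Zygmund, or Chung--Erd\H{o}s) argument on the interval $I$, viewed as a probability space. Normalise by writing $\mu(A) := |A\cap I|/|I|$, and for each sufficiently large $k$ put
\[
f_k := \sum_{Q\in\QQ_k} \mathbf 1_{S'_Q\cap I},
\]
which is a bounded measurable function because $\QQ_k$ is finite. Set $E_k := \int_I f_k\, d\mu = \sum_{Q\in\QQ_k}|S'_Q\cap I|/|I|$, so that hypothesis \eqref{eqn04} is precisely $E_k\ge c$. Expanding the square and invoking \eqref{eqn05},
\[
\int_I f_k^2\, d\mu \;=\; E_k \;+\; 2\!\!\sum_{\substack{Q_1<Q_2\\ Q_1,Q_2\in\QQ_k}}\!\!\frac{|S'_{Q_1}\cap S'_{Q_2}\cap I|}{|I|} \;\le\; E_k + 2C E_k^2 .
\]

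Next I would apply the Cauchy--Schwarz inequality in the form $\big(\int_I f_k\,d\mu\big)^2 \le \mu(\{f_k>0\})\,\int_I f_k^2\, d\mu$ (since $f_k$ is supported on $\{f_k>0\}$). Combined with the two displays above this yields
\[
\mu\!\Big(\bigcup_{Q\in\QQ_k} S'_Q\Big) \;=\; \mu(\{f_k>0\}) \;\ge\; \frac{E_k^2}{E_k+2CE_k^2} \;=\; \frac{1}{E_k^{-1}+2C} \;\ge\; \frac{1}{c^{-1}+2C},
\]
the last step using $E_k\ge c$. Because $\QQ_k\subset\QQ\cap[k,\infty)$, we have $\bigcup_{Q\in\QQ_k}S'_Q\subset\bigcup_{Q\ge k}S'_Q$, and therefore $\mu\big(I\cap\bigcup_{Q\ge k}S'_Q\big)\ge(2C+c^{-1})^{-1}$ for every sufficiently large $k$.

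To conclude, note that the sets $U_k := I\cap\bigcup_{Q\ge k}S'_Q$ are nested decreasing with intersection $I\cap\limsup_{Q\to\infty}S'_Q$; by continuity of the (finite) measure $\mu$ from above, $\mu\big(I\cap\limsup_{Q\to\infty}S'_Q\big)=\lim_{k\to\infty}\mu(U_k)\ge(2C+c^{-1})^{-1}$, which is the assertion of the lemma after multiplying through by $|I|$. I do not expect a genuine obstacle here: the argument is entirely standard, and the only two points needing attention are (i) verifying that the hypothesis $E_k\ge c$ is exactly what makes the Paley--Zygmund bound produce the stated constant $(2C+c^{-1})^{-1}$, and (ii) checking that the monotonicity of $U_k$ legitimately upgrades ``for all sufficiently large $k$'' to a statement about the $\limsup$.
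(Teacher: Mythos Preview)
Your argument is correct and is exactly the standard Chung--Erd\H{o}s/Paley--Zygmund second-moment proof; each step (the expansion of $\int f_k^2$, the Cauchy--Schwarz lower bound for $\mu(\{f_k>0\})$, and the passage to the $\limsup$ via continuity from above) is valid as written. Note, however, that the paper does not supply its own proof of this lemma: it is quoted without proof as ``Lemma~GDBC'' from \cite{BHV24}, so there is nothing in the paper to compare your approach against beyond observing that the cited result is itself a version of the divergence Borel--Cantelli lemma, whose classical proof is precisely the one you have given.
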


We bring this section to a close by introducing various pieces of specialized notation that will be used in the proofs of our results.

\medskip

\begin{notation}~
\label{notationQRcap}
\begin{itemize}
\item We let $\QQ = \{2^k : k\in\N\}$.
\item We consider a two-parameter family of regions in $\R^{m+n}$ defined as follows: for all $a,b > 0$, we let
\[
\RR(a,b) = \{(\pp,\qq) \in \R^m\times\R^n : \|\pp\| \leq a , \; \|\qq\| \leq b\}\,.
\]
\item We use capital Greek letters to denote functions which are the product of the function $q\mapsto q^{n/m}$ with a function denoted with a lowercase Greek letter: for example $$\Psi(q) := q^{\qdim/\pdim} \psi(q) \, , \quad \Phi(q) := q^{\qdim/\pdim} \phi(q) \, ,  \quad \Theta(q) := q^{\qdim/\pdim} \theta(q)  \, . $$ The exception to this is $\Delta$, which is instead defined by formula \eqref{Deltadef}.
\item  For each $Q \geq 1$ and for each $m\times n$ matrix $\bfalpha$ we let
\begin{align*}
g_Q \df \left[\begin{array}{ll}
Q^{\qdim/\pdim} \mathrm I_\pdim &\\
& Q^{-1} \mathrm I_\qdim
\end{array}\right]\quad\text{and}\quad
u_\bfalpha \df \left[\begin{array}{ll}
\mathrm I_\pdim & \bfalpha\\
& \mathrm I_\qdim
\end{array}\right].
\end{align*}
Here $\mathrm I_d$ denotes the $d$-dimensional identity matrix.
\end{itemize}
\end{notation}

\section{Proof of Theorems \ref{theorem1}--\ref{theorem3}, part 1}
\label{sectionpart1}

Throughout this section, we fix $m\geq 2$, $n\geq 1$, an $m\times n$ matrix $\bfalpha$, a doubling approximation function $\psi$ such that $\psi(q) \lesssim q^{-\qdim/\pdim}$, a curve $\CC \subset \R^\pdim$, and a $C^m$ parameterization $\ff:I_0\to \CC$ with Wronskian bounded from below by a fixed positive constant. Here $I_0 \subset \R$ is an interval. We can also assume without loss of generality that $\ff''$ is uniformly bounded on $I_0$ by a  fixed constant. Recall that in the context of Theorems~\ref{theorem1}--\ref{theorem3} we are interested in the Lebesgue measure of the set
\[
\ff^{-1}\big(\Twist_\psi(\bfalpha)\big) \, ;
\]
that is, the push-forward Lebesgue measure of the set $\CC_{\ff}  \cap  \Twist_\psi(\bfalpha) $.

\bigskip

{\it \underline{Step 1}: Rewriting the problem.}
We begin by defining a new set $\TTwist_\psi(\bfalpha)$ which is ``equivalent up to a constant'' to $\Twist_\psi(\bfalpha)$. For each $Q\geq 1$, let $A_{\psi,Q}$ denote the set of vectors $\bfbeta\in\R^\pdim$ such that for some $(\pp,\qq)\in\Z^\pdim\times\Z^\qdim$
\begin{equation}\label{vb3.2}
\|\bfalpha\qq+\pp+\bfbeta\|\le \psi(Q),\qquad\|\qq\|\le Q\,.
\end{equation}
Finally, let
\[
\TTwist_\psi(\bfalpha) := \limsup_{\QQ\ni Q\to\infty} A_{\psi,Q},
\]
where we recall that $\QQ := \{2^k : k\in\N\}$.

\begin{lemma}
\label{lemmadani}
Let $\bfalpha$ be an $m\times n$ matrix, $\psi$ be a nonincreasing doubling function and let $\mathcal{Z}_\bfalpha=\{\bfalpha\qq+\pp:\qq\in\Z^\qdim,\pp\in\Z^\pdim\}$.
There exists a constant $C \ge 1$ depending on $\psi$ such that
\[
\TTwist_{\psi}(\bfalpha)\setminus\mathcal{Z}_\bfalpha \subset \Twist_\psi(\bfalpha) \subset \TTwist_{C\psi}(\bfalpha).
\]
\end{lemma}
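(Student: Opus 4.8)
The plan is to prove the two inclusions separately, using throughout the standing assumption of this section that $\psi(q)\lesssim q^{-\qdim/\pdim}$, so that in particular $\psi(q)\to0$ as $q\to\infty$.

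For $\Twist_\psi(\bfalpha)\subset\TTwist_{C\psi}(\bfalpha)$ I would start from $\bfbeta\in\Twist_\psi(\bfalpha)$ and fix infinitely many pairs $(\pp,\qq)\in\Z^\pdim\times(\Z^\qdim\butnot\{\0\})$ with $\|\bfalpha\qq+\pp+\bfbeta\|\le\psi(\|\qq\|)$. Since $\|\qq\|\ge1$ and $\psi$ is nonincreasing, each such inequality confines $\pp$ to the ball of radius $\psi(1)$ about $-\bfalpha\qq-\bfbeta$, so only finitely many $\pp$ accompany a given $\qq$; hence the $\qq$'s occurring form an infinite set and $\|\qq\|\to\infty$ along the sequence of pairs. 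To each pair I would assign the dyadic scale $Q=Q(\qq):=2^{\max(1,\lceil\log_2\|\qq\|\rceil)}\in\QQ$, so that $\|\qq\|\le Q\le2\|\qq\|$, and then invoke the doubling hypothesis in the equivalent ``bounded ratio'' form recalled in the discussion of doubling functions preceding Theorem~\ref{theorem1} (with $K_1=2$) to obtain a constant $C=C(\psi)\ge1$ with $\psi(\|\qq\|)\le C\psi(Q)$. This gives $\|\bfalpha\qq+\pp+\bfbeta\|\le C\psi(Q)$ and $\|\qq\|\le Q$, i.e. $\bfbeta\in A_{C\psi,Q}$; since $Q(\qq)\to\infty$, $\bfbeta$ lies in infinitely many $A_{C\psi,Q}$ with $Q\in\QQ$, which is exactly $\bfbeta\in\TTwist_{C\psi}(\bfalpha)$.

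For $\TTwist_\psi(\bfalpha)\setminus\mathcal{Z}_\bfalpha\subset\Twist_\psi(\bfalpha)$ I would start from $\bfbeta\in\TTwist_\psi(\bfalpha)$ with $\bfbeta\notin\mathcal{Z}_\bfalpha$, fix an infinite set $S\subset\QQ$ with $\bfbeta\in A_{\psi,Q}$ for all $Q\in S$, and pick witnesses $(\pp_Q,\qq_Q)\in\Z^\pdim\times\Z^\qdim$ satisfying $\|\bfalpha\qq_Q+\pp_Q+\bfbeta\|\le\psi(Q)$ and $\|\qq_Q\|\le Q$. The key step is to argue that the family $\{(\pp_Q,\qq_Q):Q\in S\}$ is infinite: otherwise a single pair $(\pp,\qq)$ would serve for arbitrarily large $Q\in S$, forcing $\|\bfalpha\qq+\pp+\bfbeta\|\le\lim_{Q\to\infty}\psi(Q)=0$ and hence $\bfbeta=\bfalpha(-\qq)+(-\pp)\in\mathcal{Z}_\bfalpha$, contrary to assumption. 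Next I would observe that only finitely many $Q\in S$ can have $\qq_Q=\0$ (these would produce infinitely many distinct integer vectors $\pp_Q$ with $\|\pp_Q+\bfbeta\|\le\psi(\min S)$, which is impossible), discard them, and note that for each remaining index $\qq_Q\ne\0$ while $\|\bfalpha\qq_Q+\pp_Q+\bfbeta\|\le\psi(Q)\le\psi(\|\qq_Q\|)$, because $\psi$ is nonincreasing and $\|\qq_Q\|\le Q$. These infinitely many distinct pairs witness that $\bfbeta$ is $\psi$-approximable for the system $\qq\mapsto\bfalpha\qq+\bfbeta$, i.e. $\bfbeta\in\Twist_\psi(\bfalpha)$.

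I do not expect a genuine technical obstacle here; the one point that needs care is understanding why the exceptional set $\mathcal{Z}_\bfalpha$ must be removed, namely to exclude the degenerate case in which a single pair $(\pp,\qq)$ places $\bfbeta$ in infinitely many $A_{\psi,Q}$ without producing the infinitely many distinct approximants required by the definition of $\Twist_\psi(\bfalpha)$. Everything else is routine bookkeeping: tracking the doubling constant, handling the possibility $\qq=\0$, and translating between the continuous scale $\|\qq\|$ and the dyadic scale $Q\in\QQ=\{2^k\}$.
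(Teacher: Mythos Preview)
Your proposal is correct and matches the paper's approach: doubling handles the passage from $\|\qq\|$ to the dyadic scale $Q\in\QQ$ for the right inclusion, while $\bfbeta\notin\mathcal{Z}_\bfalpha$ together with $\psi(Q)\to0$ forces infinitely many distinct approximating pairs (with $\qq\neq\0$) for the left inclusion. Your treatment of the $\qq=\0$ case is in fact more explicit than the paper's; the one small imprecision---that infinitely many $Q$ with $\qq_Q=\0$ need not \emph{a priori} yield infinitely many distinct $\pp_Q$---is covered by reusing the same $\mathcal{Z}_\bfalpha$ argument you already gave.
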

\begin{proof}
Suppose that $\bfbeta\in \Twist_\psi(\bfalpha)$. Then there are infinitely many  $(\pp,\qq)\in\Z^\pdim\times(\Z^\qdim\butnot\{\0\})$ such that $\|\bfalpha\qq + \pp + \bfbeta\| \leq \psi(\|\qq\|)$. Fix any of these $(\pp,\qq)$ and
write $Q/2 < \|\qq\| \leq Q$ for some $Q\in\QQ$, that is, $Q=2^k$ for some $k\in\N$. Since $\psi$ is doubling, we have $\psi(\|\qq\|) \asymp \psi(Q)$ where the implied constant depends only on $\psi$. Therefore
\begin{equation}\label{vb3.3}
\|\bfalpha\qq+\pp+\bfbeta\|\ll\psi(Q),\qquad\|\qq\|\le Q\,.
\end{equation}
Since the above holds for infinitely many $(\pp,\qq)$, \eqref{vb3.3} holds for some $(\pp,\qq)$ in question for infinitely many $Q\in\QQ$ and so we have that $\bfbeta\in \TTwist_{C\psi}(\bfalpha)$ for some $C$ depending on $\psi$ only. The left hand side inclusion is similar. Indeed,  first notice \eqref{vb3.2} already implies \eqref{vb1.2}. Then to ensure that for a given  $\bfbeta\in \TTwist_{\psi}(\bfalpha)\setminus\mathcal{Z}_\bfalpha$ there are infinitely many different $(\pp,\qq)\in\Z^\pdim\times(\Z^\qdim\butnot\{\0\})$,  we use the fact that $\bfalpha\qq+\pp+\bfbeta\neq0$ which follows directly from the fact that $\bfbeta\not\in\mathcal{Z}_\bfalpha$.
\end{proof}

In view of Lemma~\ref{lemmadani} and the fact that $\mathcal{Z}_\bfalpha$ is a countable set, to prove the main theorems it suffices to estimate the measure of the set
\begin{equation}\label{vb03.3}
\ff^{-1}(\TTwist_\psi(\bfalpha)) := \limsup_{Q\to\infty} S_Q   \quad {\rm where}   \quad
S_Q = S_{\psi,Q} = \ff^{-1}(A_{\psi,Q}) \;\;\; (Q\in\QQ)\,.
\end{equation}
In pursuit of this goal we proceed to estimate the measure of the sets $ S_Q $.
A sufficiently accurate estimate on $|S_Q|$ will suffice to directly prove Theorem~\ref{theorem1}, but to prove Theorem~\ref{theorem3} we will need to estimate $|S_Q\cap I|$ where $I \subset I_0$ is any interval, so as to apply Lemma~\ref{lemmafullmeasure}. The proof of Theorem~\ref{theorem2} will be even more involved: we will need to instead consider a family of sets $(\SQnew)_{Q\in\QQ}$ such that $\SQnew \subset S_Q$ for all $Q\in\QQ$, and such that expressions of the form $|\SQnew\cap I|$ and $|S'_{Q_1} \cap S'_{Q_2}\cap I|$ can both be bounded well enough to apply Lemma~\ref{GDBC}.

To proceed further, we observe that for any $\gamma > 0$ and any measurable $S\subset \R$, by Fubini's theorem applied to $\chi_{\{s+\gamma x\in S\}}$, we have that
\begin{equation}
\label{gammaperturbation}
|S| = \frac12 \int \big|\big\{\tmod\in [-1,1] : \tbase + \gamma \tmod \in S\big\}\big| \;\dee \tbase.
\end{equation}
To estimate this integral when $S = S_Q$, it will be helpful to find a condition ``equivalent up to a constant'' to the condition $\tbase + \gamma \tmod \in S_Q$. Recall that $\tbase + \gamma \tmod \in S_Q$ if and only if $\ff(\tbase + \gamma \tmod) \in A_{\psi,Q}$, where $A_{\psi,Q}$ is given by \eqref{vb3.2}. Thus $\tbase + \gamma \tmod \in S_Q$ if and only if for some $(\pp,\qq)\in\Z^\pdim\times\Z^\qdim$
\begin{equation}\label{vb3.50}
\|\bfalpha\qq+\pp+\ff(\tbase + \gamma \tmod)\|\le \psi(Q),\qquad\|\qq\|\le Q\,.
\end{equation}
Naturally, to obtain this ``equivalent up to a constant'' condition we will replace $\ff(\tbase + \gamma \tmod)$ in \eqref{vb3.50} by the linear part of its Taylor expansion. Here we are thinking of $\tbase$ as ``fixed'' as opposed to $\tmod$ which is ``variable'', though we keep in mind that in the end we will have to integrate with respect to $\tbase$ in \eqref{gammaperturbation}. Now fix a function $\theta:\QQ \to \Rp$ satisfying
\begin{equation}\label{vb3.5}
Q^{-\qdim/\pdim} \leq \theta(Q) = o(\psi^{1/2}(Q)),
\end{equation}
to be specified later, with the intention of letting $\gamma = \theta(Q)$ when $S = S_Q$. We fix $\tbase \in I_0$ and $\tmod\in [-1,1]$ such that $\tbase + \theta(Q) \tmod \in I_0$. By Taylor's theorem, we have that
\begin{align}
\ff\big(\tbase + \theta(Q) \tmod\big)
&= \ff(\tbase) + \theta(Q) \tmod \ff'(\tbase) + O\big(\theta^2(Q)\big)\,.\label{vb3.7}
\end{align}
Now for each $c > 0$, define $\TS_{Q,\tbase}(c)$ be the set of all $\tmod\in [-1,1]$ such that for some $(\pp,\qq)\in\Z^\pdim\times\Z^\qdim$
\begin{equation}\label{vb3.2+}
\|\bfalpha\qq+\pp+\ff(\tbase)+\theta(Q) \tmod \ff'(\tbase)\|\le c\psi(Q),\qquad\|\qq\|\le Q\,.
\end{equation}
Note that by \eqref{vb3.5}, it follows that $O\big(\theta^2(Q)\big)=o(\psi(Q))$. Then, on comparing \eqref{vb3.50}${}_{\gamma=\theta(Q)}$ and \eqref{vb3.2+} and using \eqref{vb3.7} we immediately obtain the following statement.

\begin{lemma}
\label{lemmaSQs}
If $Q$ is sufficiently large and the closed ball $B(\tbase,\theta(Q))$ is contained in $I_0$, then
\[
\TS_{Q,\tbase}(\tfrac12) \;\subset\; \{\tmod\in [-1,1] : \tbase + \theta(Q) \tmod \in S_Q\} \;\subset\; \TS_{Q,\tbase}(\tfrac32).
\]
\end{lemma}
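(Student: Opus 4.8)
The plan is to reduce both inclusions to the Taylor estimate \eqref{vb3.7} together with the triangle inequality; nothing else is needed. First I would unravel the definitions: $\tbase+\theta(Q)\tmod\in S_Q$ is, by the discussion preceding \eqref{vb3.50}, precisely the assertion \eqref{vb3.50} with $\gamma=\theta(Q)$, namely that there exists $(\pp,\qq)\in\Z^\pdim\times\Z^\qdim$ with $\|\bfalpha\qq+\pp+\ff(\tbase+\theta(Q)\tmod)\|\le\psi(Q)$ and $\|\qq\|\le Q$; and $\tmod\in\TS_{Q,\tbase}(c)$ is exactly the condition \eqref{vb3.2+}. Thus the two conditions differ only in that the vector $\ff(\tbase+\theta(Q)\tmod)$ is replaced by $\ff(\tbase)+\theta(Q)\tmod\,\ff'(\tbase)$. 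The hypothesis $B(\tbase,\theta(Q))\subset I_0$ ensures that $\tbase+\theta(Q)\tmod\in I_0$ for every $\tmod\in[-1,1]$ and that the Lagrange remainder in \eqref{vb3.7} is evaluated at a point of $I_0$, so that the standing bound on $\|\ff''\|$ applies; hence the difference of the two vectors has norm at most $E_Q:=\tfrac12\theta^2(Q)\sup_{I_0}\|\ff''\|$, a quantity that depends on neither $\tbase$ nor $\tmod$. Since $\theta(Q)=o(\psi^{1/2}(Q))$ by \eqref{vb3.5}, we have $E_Q\le\tfrac12\psi(Q)$ for all sufficiently large $Q$, uniformly in $\tbase$.

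Granting this, the first inclusion follows at once: given $\tmod\in\TS_{Q,\tbase}(\tfrac12)$ with $(\pp,\qq)$ witnessing \eqref{vb3.2+} with $c=\tfrac12$, the triangle inequality yields
\begin{align*}
\|\bfalpha\qq+\pp+\ff(\tbase+\theta(Q)\tmod)\|
&\le\|\bfalpha\qq+\pp+\ff(\tbase)+\theta(Q)\tmod\,\ff'(\tbase)\|+E_Q\\
&\le\tfrac12\psi(Q)+\tfrac12\psi(Q)=\psi(Q),
\end{align*}
and since $\|\qq\|\le Q$ this shows $\tbase+\theta(Q)\tmod\in S_Q$. The second inclusion is symmetric: if $\tbase+\theta(Q)\tmod\in S_Q$ with witness $(\pp,\qq)$, then $\|\bfalpha\qq+\pp+\ff(\tbase)+\theta(Q)\tmod\,\ff'(\tbase)\|\le\psi(Q)+E_Q\le\tfrac32\psi(Q)$, so $\tmod\in\TS_{Q,\tbase}(\tfrac32)$.

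The only delicate point — the closest thing here to an obstacle — is the uniformity in $\tbase$ of the threshold ``$Q$ sufficiently large'': one needs $E_Q\le\tfrac12\psi(Q)$ to hold for all admissible $\tbase$ at once, and this works precisely because the implied constant in \eqref{vb3.7} may be taken to be $\tfrac12\sup_{I_0}\|\ff''\|<\infty$, as guaranteed by the standing assumptions of this section. Everything else is a routine comparison of two systems of inequalities.
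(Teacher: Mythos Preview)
Your argument is correct and is exactly the approach the paper takes: compare \eqref{vb3.50} with $\gamma=\theta(Q)$ against \eqref{vb3.2+} using the Taylor estimate \eqref{vb3.7}, with the error $O(\theta^2(Q))=o(\psi(Q))$ absorbed by the gap between the constants $\tfrac12,1,\tfrac32$. You have simply written out in full what the paper dispatches in a single sentence.
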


\bigskip

{\it \underline{Step 2}: Estimating $|\TS_{Q,\tbase}(c)|$.}
Using the notation introduced in Section~\ref{prelim},
define
\begin{equation}\label{vb3.9}
\hspace*{5ex}\point_{\tbase} = (Q^{\qdim/\pdim} \ff(\tbase),\0)\qquad\text{and}\qquad  \Lambda_Q = g_Q u_\bfalpha \Z^\dimsum\,,
\end{equation}
\begin{align*}
L_{\tbase} = [-\Theta(Q),\Theta(Q)] \cdot (\ff'(\tbase),\0)\qquad\text{and}\qquad
 R_c = \RR(c\Psi(Q),1)\,,\hspace*{5.8ex}
\end{align*}
where $\Theta$ and $\Psi$ are as in Notation \ref{notationQRcap}.
Then, by \eqref{vb3.2+}, for all $\tmod\in [-1,1]$
\begin{equation}
\label{Qapprox2}
x\in \TS_{Q,\tbase}(c)\quad\Longleftrightarrow\quad \point_{\tbase} + \Theta(Q)\tmod (\ff'(\tbase),\0) \in \Lambda_Q + R_c\,.
\end{equation}
The condition on the right hand side  of \eqref{Qapprox2} defines the intersection of the line segment $\point_{\tbase}+L_{\tbase}$ in $\R^{m+n}$ with the collection of rectangles $\Lambda_Q+R_c$ centred at points in the lattice $\Lambda_Q$. Moreover, the map $\tmod\mapsto \point_{\tbase}+\Theta(Q)\tmod(\ff'(\tbase),\0)$ is a linear bijection between the interval $[-1,1]$ and the line segment $\point_{\tbase}+L_{\tbase}$. Consequently,
\begin{equation}\label{vb3.10}
|\TS_{Q,\tbase}(c)| = 2\frac{|(\pointtbase + \Ltbase)\cap (\LambdaQ + R_c)|}{|\Ltbase|}\,,
\end{equation}
where $|\cdot|$ denotes one-dimensional Hausdorff measure (or equivalently the length of a subset of a line).

\begin{lemma}
\label{lemma1}
There exists a constant $C_1 \geq 1$ such that if
\begin{equation}
\label{LRdual}
C_1(L_{\tbase} + R_{1/2})^* \cap \LambdaQ^* = \{\0\}
\end{equation}
then for $c \in \{1/2,3/2\}$ we have that
\begin{equation}
\label{upperbound}
|\TS_{Q,\tbase}(c)| \lesssim \Psi^\pdim(Q)\,.
\end{equation}
If in addition
\begin{equation}
\label{notpsiapprox}
2R_{3/2} \cap \LambdaQ = \{\0\}
\end{equation}
then
\begin{equation}
\label{goodt1}
|\TS_{Q,\tbase}(c)| \asymp \Psi^\pdim(Q).
\end{equation}
The implied constants above do not depend on $Q$. \end{lemma}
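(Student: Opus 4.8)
The plan is to translate the quantity $|\TS_{Q,\tbase}(c)|$, which by \eqref{vb3.10} is the normalized length of the intersection of a line segment $\pointtbase + \Ltbase$ with the union of translates $\LambdaQ + R_c$, into a counting problem governed by the Minkowski minima of $R_c$ with respect to $\LambdaQ$, and then to control those minima using the hypotheses \eqref{LRdual} and \eqref{notpsiapprox} via the duality principle for Minkowski minima (Notation~\ref{not1}). First I would observe that $\mathrm{vol}(R_c) \asymp \Psi^\pdim(Q)$ (since $R_c = \RR(c\Psi(Q),1)$ is a box of dimensions $\asymp \Psi(Q)$ in $m$ directions and $\asymp 1$ in $n$ directions) and that $\LambdaQ = g_Q u_\bfalpha \Z^\dimsum$ is unimodular, so $\mathrm{covol}(\LambdaQ) = 1$. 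Hence by Minkowski's second theorem the number of lattice points of $\LambdaQ$ in a bounded dilate of $R_c$ is comparable to $\mathrm{vol}(R_c)/\prod_i \lambda_i$ when all $\lambda_i(R_c;\LambdaQ) \lesssim 1$, and to the smaller count $\prod_{i : \lambda_i \le 1}\lambda_i^{-1}$ in general.

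\smallskip

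The key steps, in order, are as follows. \textbf{(1)} Fix $C_1$ so that hypothesis \eqref{LRdual}, together with the duality principle $\lambda_i(R^*;\Lambda^*) \asymp \lambda_{d+1-i}(R;\Lambda)$, forces $\lambda_1(L_{\tbase}+R_{1/2};\LambdaQ)$ — equivalently, since $L_{\tbase}+R_{1/2} \asymp R_{1/2} \asymp R_c$ up to bounded factors for $c \in \{1/2,3/2\}$ — to be bounded \emph{below} by a constant; more precisely \eqref{LRdual} says the polar lattice has no nonzero point in $C_1$ times the polar body, i.e. $\lambda_1((L_{\tbase}+R_{1/2})^*;\LambdaQ^*) \ge 1/C_1$, which by duality means the \emph{largest} minimum $\lambda_d(L_{\tbase}+R_{1/2};\LambdaQ) \lesssim 1$, hence all minima of $R_c$ with respect to $\LambdaQ$ are $\lesssim 1$. \textbf{(2)} Given that all $\lambda_i(R_c;\LambdaQ) \lesssim 1$, deduce the upper bound \eqref{upperbound}: cover the line segment's worth of intersections by noting that each translate $\LambdaQ + R_c$ meeting $\pointtbase + \Ltbase$ contributes length $\lesssim \mathrm{diam}(R_c \cap \text{line direction}) \lesssim \Psi(Q)$ if the $\ff'$-direction is ``long'' in $R_c$, or else at most $|\Ltbase|$ times the density of such translates; either way, summing over the $\asymp \mathrm{vol}(R_c + \Ltbase)/\mathrm{covol} \asymp \Psi^\pdim(Q)\Theta(Q)$-many relevant lattice points and dividing by $|\Ltbase| \asymp \Theta(Q)$ yields $\lesssim \Psi^\pdim(Q)$. (One must be a little careful: the honest argument is to bound $|(\pointtbase+\Ltbase)\cap(\LambdaQ+R_c)|$ by $\#\{\vv \in \LambdaQ : (\vv + R_c)\cap(\pointtbase+\Ltbase)\ne\emptyset\}$ times $\mathrm{length}(R_c \text{ along } \Ltbase)$, and the point count is $\lesssim \mathrm{vol}(R_c + \Ltbase + (\text{fundamental domain}))$.) \textbf{(3)} For the matching lower bound \eqref{goodt1}, use the extra hypothesis \eqref{notpsiapprox}: $2R_{3/2}\cap\LambdaQ = \{\0\}$ means the open box has no nonzero lattice point, so $\lambda_1(R_{3/2};\LambdaQ) \ge 1$ (up to the constant), i.e. the first minimum is also bounded \emph{below}. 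Combined with step~(1), \emph{all} minima of $R_c$ with respect to $\LambdaQ$ are now $\asymp 1$. Then the translates $\LambdaQ + R_c$ are ``evenly spaced at unit density along the long directions'' so that a positive proportion of the segment $\pointtbase + \Ltbase$ lies in $\LambdaQ + R_c$ — quantitatively, one picks a sublattice direction transverse to $\ff'(\tbase)$ in which $\LambdaQ$ has bounded covolume, tiles the segment, and shows each tile of length $\asymp \Theta(Q)$ contains a sub-interval of length $\asymp \Psi(Q)$ inside some translate, giving $|\TS_{Q,\tbase}(c)| \gtrsim \Psi(Q)\cdot\Psi^{\pdim-1}(Q) = \Psi^\pdim(Q)$. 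Here the nondegeneracy of $\ff$ (Wronskian bounded below, so in particular $\ff'(\tbase)\ne\0$) ensures the segment is genuinely transverse and the Taylor linearization direction $(\ff'(\tbase),\0)$ is not swallowed by $R_c$.

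\smallskip

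The main obstacle I anticipate is step~(3), the lower bound: passing from ``all Minkowski minima of $R_c$ with respect to $\LambdaQ$ are $\asymp 1$'' to ``a definite proportion of this particular line segment hits the union of translates'' is not purely a volume count, because the segment direction $(\ff'(\tbase),\0)$ is fixed and could conceivably be nearly aligned with a thin direction of the lattice's geometry. The resolution is to use that $\pointtbase+\Ltbase$ has length $\asymp \Theta(Q) \gg 1$ (by \eqref{vb3.5}, $\Theta(Q) = Q^{n/m}\theta(Q) \ge 1$) while $R_c$ has thickness $\asymp 1$ in the $\qq$-directions and $\asymp \Psi(Q)$ in the $\pp$-directions, and to invoke that when all minima are $\asymp 1$ there is a basis of $\LambdaQ$ all of whose vectors have length $\asymp$ the corresponding semi-axes of $R_c$; projecting the segment onto the quotient by the one basis vector closest to direction $(\ff'(\tbase),\0)$ reduces to a one-dimensional equidistribution/pigeonhole statement. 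A secondary technical point is verifying $L_{\tbase}+R_{1/2} \asymp R_{1/2}$ (so that the dual hypothesis on $L_{\tbase}+R_{1/2}$ transfers to $R_c$): this holds because $L_{\tbase} = [-\Theta(Q),\Theta(Q)]\cdot(\ff'(\tbase),\0)$ is a segment in the $\pp$-block of length $\asymp \Theta(Q) \le \Psi(Q)$ when $\theta(Q) \le \psi(Q)$, which follows from \eqref{vb3.5} for $Q$ large (indeed $\theta = o(\psi^{1/2})$ gives $\theta \ll \psi^{1/2}$, and we may arrange $\theta \le \psi$); hence $L_{\tbase} \subset C R_{1/2}$ and $L_{\tbase}+R_{1/2} \asymp R_{1/2}$, with the same $\asymp R_c$. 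Everything else — the volume computations, unimodularity of $\LambdaQ$, and the diameter bounds — is routine.
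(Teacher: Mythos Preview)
There is a genuine gap, and it hinges on your ``secondary technical point'': the claim that $L_{\tbase}+R_{1/2}\asymp R_{1/2}$ (equivalently $\Theta(Q)\lesssim\Psi(Q)$) is false, and the inequality is in fact reversed. From \eqref{vb3.5} one has $\theta(Q)\ge Q^{-n/m}$, while the standing assumption at the top of Section~\ref{sectionpart1} gives $\psi(Q)\lesssim Q^{-n/m}$; hence $\Theta(Q)=Q^{n/m}\theta(Q)\ge 1$ whereas $\Psi(Q)=Q^{n/m}\psi(Q)\lesssim 1$. The segment $L_{\tbase}$ is therefore the \emph{long} direction of $S:=L_{\tbase}+R_c$, not something absorbed by $R_c$. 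Your justification ``$\theta=o(\psi^{1/2})$ \ldots\ we may arrange $\theta\le\psi$'' cannot be made to hold under \eqref{vb3.5}. Consequently, while \eqref{LRdual} plus duality does yield $\lambda_{\pdim+\qdim}(L_{\tbase}+R_{1/2};\LambdaQ)\lesssim 1$, this says nothing about $\lambda_{\pdim+\qdim}(R_c;\LambdaQ)$, which may be arbitrarily large. Your step~(3) then collapses: the conclusion ``all minima of $R_c$ with respect to $\LambdaQ$ are $\asymp 1$'' does not follow, and the ``evenly spaced translates'' heuristic for the lower bound has no basis.

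The paper's argument avoids this by working with $S=L_{\tbase}+R_c$ throughout rather than with $R_c$ alone. From \eqref{LRdual} and duality one gets that a fundamental domain of $\LambdaQ$ sits inside $\tfrac14 S$, whence $\#\big(\tfrac12 S\cap(\LambdaQ-\pointtbase)\big)\asymp\#\big(S\cap(\LambdaQ-\pointtbase)\big)\asymp|S|\asymp\Theta(Q)\Psi^{\pdim-1}(Q)$ (note the exponent is $\pdim-1$, not $\pdim$ as in your step~(2)). For each $\bb$ in this set one shows $|L_{\tbase}\cap(\bb+R_c)|\lesssim\Psi(Q)$ always, and $\gtrsim\Psi(Q)$ when $\bb\in\tfrac12 S$ (by writing $\bb=\cc+\dd$ with $\cc\in\tfrac12 L_{\tbase}$, $\dd\in\tfrac12 R_c$). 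The extra hypothesis \eqref{notpsiapprox} enters not as a bound on a Minkowski minimum but to guarantee that the translates $\bb+R_c$ are \emph{pairwise disjoint}, so that the individual lengths sum. Multiplying the count by $\Psi(Q)$ and dividing by $|L_{\tbase}|\asymp\Theta(Q)$ via \eqref{vb3.10} gives $\Psi^\pdim(Q)$. Your ``honest argument'' parenthetical in step~(2) is close to the correct upper bound, but the lower bound needs the disjointness mechanism rather than control of $\lambda_i(R_c;\LambdaQ)$.
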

\begin{proof}
Suppose that \eqref{LRdual} holds, fix $c \in \{1/2,3/2\}$, and consider the convex, centrally symmetric region $S = \Ltbase+R$, where $R_{1/2} \subset R = R_c \subset R_{3/2}$. Then, by \eqref{LRdual}, we have that$\lambda_1(S^* ; \LambdaQ^*) \geq C_1$ and thus by the duality principle for Minkowski minima \cite[Theorem~VIII.5.VI]{Cassels3}, we have that $\lambda_{\dimsum}(S ; \LambdaQ) \lesssim \tfrac 1{C_1}$, where the implied constant depend on $m$ and $n$ only. Thus $\LambdaQ$ has a fundamental domain $D$ such that $D \subset \tfrac C{C_1} S$, where $C$ is a constant depending on $m$ and $n$ only. Choosing $C_1 \geq 4C$, we have that $D \subset \tfrac14 S$ and hence
\[
\tfrac14 S \subset \bigcup_{\bb\in \tfrac12S\cap (\LambdaQ - \pointtbase)} (\bb + D) \subset \bigcup_{\bb\in S\cap (\LambdaQ - \pointtbase)} (\bb + D) \subset \tfrac 54 S\,.
\]
Since $|D| = \Covol(\LambdaQ) = 1$ and the above unions are disjoint, taking volumes yields
\begin{equation}
\label{asympS}
\#\left(\tfrac12S\cap (\LambdaQ - \pointtbase)\right) \asymp \#\big(S\cap (\LambdaQ - \pointtbase)\big) \asymp |S|.
\end{equation}
Furthermore, it is easy to see that
\begin{align}
\label{vb3.15}\bigcup_{\bb\in \tfrac12S\cap (\LambdaQ - \pointtbase)} (\pointtbase+\Ltbase)&\cap (\pointtbase+\bb+R)
\;\subset\;
(\pointtbase + \Ltbase)\cap (\LambdaQ + R)\\[-2ex]
&\;\subset
\bigcup_{\bb\in S\cap (\LambdaQ - \pointtbase)} (\pointtbase+\Ltbase)\cap (\pointtbase+\bb+R).\nonumber
\end{align}
Hence, if \eqref{notpsiapprox} holds, then the union on the left-hand side is disjoint and thus taking lengths in \eqref{vb3.15}  and applying \eqref{asympS} yields
\begin{equation}\label{vb3.16}
|S| \cdot \min_{\bb \in \tfrac12 S} |\Ltbase\cap (\bb + R)|
\lesssim
|(\pointtbase + \Ltbase)\cap (\LambdaQ + R)|
\lesssim
|S| \cdot \max_{\bb \in S} |\Ltbase\cap (\bb + R)|.
\end{equation}
We note that even if \eqref{notpsiapprox} does not hold, the right hand side of these inequalities still holds. Now from the definitions of $\Ltbase$ and $R$, it is evident that $|\Ltbase\cap (\bb + R)| \lesssim \Psi(Q)$ for all $\bb$. On the other hand, if $\bb\in \tfrac12 S$ then we can write $\bb = \cc + \dd$ where $\cc\in \tfrac12 \Ltbase$ and $\dd\in \tfrac12 R$, and thus
\[
|\Ltbase\cap (\bb + R)| = |(\Ltbase - \cc)\cap (\dd + R)| \geq |\tfrac12 \Ltbase \cap \tfrac12 R| \asymp |\Ltbase\cap R| \asymp \Psi(Q),
\]
where in the last estimate we have used the assumption that $\theta(Q) \geq Q^{-n/m} \gg \psi(Q)$. Combining the estimates for $|\Ltbase\cap (\bb + R)|$ with \eqref{vb3.16} yields
\[
|(\pointtbase + \Ltbase)\cap (\LambdaQ + R)| \asymp |S| \cdot \Psi(Q)\,,
\]
where only the lower bound is conditional on \eqref{notpsiapprox}.
Since $\Psi(Q) \lesssim 1 \leq \Theta(Q)$, up to constant factors $S$ is a box of dimensions $\Theta(Q)\times \Psi(Q) \times\ldots\times\Psi(Q) \times 1\times\ldots\times 1$, and thus
\[
|S| \asymp \Theta(Q) \cdot\Psi^{\pdim-1}(Q)\,.
\]
Plugging this into the previous formula yields
\[
|(\pointtbase + \Ltbase)\cap (\LambdaQ + R)| \asymp \Theta(Q) \Psi^m(Q) \asymp |\Ltbase|\cdot \Psi^m(Q)\,,
\]
where again only the lower bound is conditional on \eqref{notpsiapprox}. Together with \eqref{vb3.10} this completes the proof.
\end{proof}

From now on, a point $\tbase\in I_0$ will be called \emph{good} (with respect to $Q$) if \eqref{LRdual} holds, and \emph{bad} otherwise.

\medskip

{\it \underline{Step 3}: Estimating the probability that $\tbase$ is bad.} Let $B = B_Q$ denote the set of points that are bad with respect to $Q$.
Note that the polar body $(L_{\tbase} + R_{1/2})^*$ involved in defining bad points via \eqref{LRdual} satisfies
\[
(L_{\tbase} + R_{1/2})^* \subset \left\{
(\pp,\qq) \in \R^\dimsum \left|\;
|\pp\cdot \ff'(\tbase)| \leq \frac{1}{\Theta(Q)},
\|\pp\| \leq \frac{2}{\Psi(Q)},
\|\qq\| \leq 1
\right.\right\}.
\]
Let $\tbase\in B$. Then \eqref{LRdual} fails and therefore there exists $(\pp,\qq)\in C_1(L_\tbase\cap R_c)^* \cap \LambdaQ^* \butnot \{\0\}$. That is, there exists $(\pp,\qq)\in\LambdaQ^*\butnot\{\0\}$ such that
$$
|\pp\cdot\ff'(\tbase)| \leq \frac{C_1}{\Theta(Q)},\qquad\|\pp\| \leq \frac{2C_1}{\Psi(Q)}\qquad\text{and}\qquad \|\qq\| \leq C_1\,.
$$
Hence
\begin{equation}\label{B_Q}
|B_Q| \leq \sum_{\substack{(\pp,\qq) \in \LambdaQ^*\butnot\{\0\} \\ \|\pp\| \leq 2C_1/\Psi(Q) \\ \|\qq\| \leq C_1}} \left|\left\{\tbase\in I_0 : |\pp \cdot \ff'(\tbase)| \leq \frac{C_1}{\Theta(Q)}\right\}\right|.
\end{equation}
The following lemma attributed to Pjartli \cite{MR250978}, whose complete proof can also be found in \cite[Lemmas~2]{Beresnevich_Groshev}, will allow us to estimate the terms on the right-hand side of \eqref{B_Q}.

\begin{lemma}[{\cite[Lemmas~2]{Beresnevich_Groshev}}]\label{Pjartli}
Let $\delta,\rho>0$, $I$ be an interval, and $\phi$ be a $C^k$ function on $I$ such that  $|\phi^{(k)}(x)|\ge\rho$ for all $x\in I$. Then $|\{x\in I : |\phi(x)| < \delta\}| \ll (\delta/\rho)^{1/k}$, where the implied constant depends on $k$ only.
\end{lemma}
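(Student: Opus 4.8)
I would argue by induction on $k$, the order of the derivative. First I would dispose of the base case $k=1$: since $\phi$ is $C^1$ and $|\phi'| \geq \rho > 0$ on $I$, the function $\phi'$ has constant sign, so $\phi$ is strictly monotone and, by the mean value theorem, $|\phi(x) - \phi(y)| \geq \rho|x-y|$ for all $x,y \in I$. Consequently the set $E = \{x \in I : |\phi(x)| < \delta\}$ has $\phi$-image contained in $(-\delta,\delta)$, so $\mathrm{diam}(E) \leq 2\delta/\rho$ and hence $|E| \leq 2\delta/\rho$, which is the assertion with implied constant $2$.

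For the inductive step, I would assume the lemma for $k-1$ with some constant $C_{k-1}$ depending only on $k-1$, and let $\phi$ be $C^k$ with $|\phi^{(k)}| \geq \rho$ on $I$. The observation driving the induction is that $\phi^{(k-1)}$ is itself a $C^1$ function whose derivative $\phi^{(k)}$ has absolute value $\geq \rho$, so by the $k=1$ analysis $\phi^{(k-1)}$ is strictly monotone and, for any auxiliary parameter $\eta > 0$ to be chosen later, the set $J = \{x \in I : |\phi^{(k-1)}(x)| < \eta\}$ has diameter, hence measure, at most $2\eta/\rho$. On $I \setminus J$, which is a union of at most two subintervals $I_1, I_2$, one has $|\phi^{(k-1)}| \geq \eta$, so the inductive hypothesis applied to $\phi|_{I_j}$ with $\eta$ in the role of $\rho$ gives $|\{x \in I_j : |\phi(x)| < \delta\}| \leq C_{k-1}(\delta/\eta)^{1/(k-1)}$.

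Adding these estimates yields $|E| \leq |J| + \sum_j |E \cap I_j| \leq 2\eta/\rho + 2 C_{k-1}(\delta/\eta)^{1/(k-1)}$, and the final step is to optimise $\eta$. Choosing $\eta = \delta^{1/k}\rho^{(k-1)/k}$ makes $\eta/\rho = (\delta/\rho)^{1/k}$ and $(\delta/\eta)^{1/(k-1)} = (\delta/\rho)^{1/k}$ simultaneously, so $|E| \leq (2 + 2C_{k-1})(\delta/\rho)^{1/k}$, completing the induction with $C_k = 2 + 2C_{k-1}$, which depends only on $k$.

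I do not anticipate a serious obstacle: the argument is elementary, and the only points needing a little care are the bookkeeping of the implied constant through the recursion and the degenerate cases — when $J$ is empty, when $I \setminus J$ has fewer than two components, or when $E$ is empty — all of which are handled trivially. As an aside, one could first reduce to the normalised case $\rho = 1$ by replacing $\phi$ with $\phi/\rho$, which slightly shortens the write-up, but this is not essential.
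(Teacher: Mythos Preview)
Your argument is correct and is essentially the standard induction proof of this inequality (originally due to Pyartli). Note, however, that the paper does not supply its own proof of this lemma: it is quoted from \cite{Beresnevich_Groshev} as a known auxiliary result, so there is no in-paper proof to compare against. Your write-up would serve perfectly well as a self-contained justification.
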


Applying this lemma to $\phi(\tbase)=\pp \cdot \ff'(\tbase)$ gives the following

\begin{lemma}
\label{lemmanondegenerate}
For all $\pp\in \R^\pdim$ and $\delta > 0$ we have that
\begin{equation}
\label{nondegenerate}
|\{s\in I_0 : |\pp\cdot\ff'(s)| \leq \delta\}| \lesssim (\delta/\|\pp\|)^{1/(\pdim - 1)}\,,
\end{equation}
where the implied constant depends on $\ff$ and $m$ only.
\end{lemma}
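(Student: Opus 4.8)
The plan is to deduce this directly from Pjartli's Lemma (Lemma~\ref{Pjartli}) applied to the scalar function $\phi(s) = \pp\cdot\ff'(s)$. The only thing that needs to be checked is that for a suitable $k$ (we will take $k = m-1$), some derivative $\phi^{(k)}(s) = \pp\cdot\ff^{(k+1)}(s)$ is bounded below in absolute value by a constant multiple of $\|\pp\|$, uniformly over $s\in I_0$ and over $\pp$. Once this is established, Lemma~\ref{Pjartli} with $\delta$ replaced by $\delta$ and $\rho \asymp \|\pp\|$ immediately gives $|\{s\in I_0 : |\pp\cdot\ff'(s)| \leq \delta\}| \ll (\delta/\|\pp\|)^{1/(m-1)}$, which is exactly \eqref{nondegenerate}. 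By homogeneity in $\pp$ we may assume $\|\pp\| = 1$, i.e. $\pp$ ranges over the unit sphere $S^{m-1}$.

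The key estimate is therefore: there is a constant $\rho_0 > 0$, depending only on $\ff$ and $m$, such that for every unit vector $\pp$ and every $s\in I_0$,
\[
\max_{1\le k\le m-1}\; |\pp\cdot \ff^{(k+1)}(s)| \;\ge\; \rho_0\,.
\]
This follows from the standing assumption (see Remark~\ref{remark1} and the setup of Section~\ref{sectionpart1}) that the Wronskian $W(s) := \det[\ff'(s),\ff''(s),\dots,\ff^{(m)}(s)]$ is bounded below by a fixed positive constant $w_0 > 0$ on $I_0$. Indeed, if $\pp$ were simultaneously nearly orthogonal to all of $\ff'(s),\dots,\ff^{(m)}(s)$ at a single point $s$, then the matrix $[\ff'(s),\dots,\ff^{(m)}(s)]$ would have a unit vector nearly in its left kernel, forcing its determinant to be small; more precisely, expanding $W(s)$ along the components and using that $\|\ff^{(k)}\|$ is uniformly bounded on $I_0$ (another standing assumption), one gets $|W(s)| \lesssim \max_{1\le k\le m}|\pp\cdot\ff^{(k)}(s)|$ with an implied constant depending only on $\ff$ and $m$. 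Since $|W(s)|\ge w_0$, this yields $\max_{1\le k\le m}|\pp\cdot\ff^{(k)}(s)| \ge \rho_0$ for $\rho_0 = w_0/C(\ff,m)$. Note this involves $\ff^{(1)}$ through $\ff^{(m)}$, i.e. derivatives $\phi^{(0)},\dots,\phi^{(m-1)}$ of $\phi$; so we should apply Pjartli's Lemma on each of finitely many subintervals of $I_0$ on which a fixed one of these derivatives dominates — but since we only get to control $|\{|\phi|\le\delta\}|$ and $\phi^{(0)} = \phi$ itself, the cleanest route is: partition $I_0$ (depending on $\pp$) into the region where $|\phi(s)|\ge\rho_0/m$, which trivially satisfies the bound since $\delta/\|\pp\|$ can be assumed $\le \rho_0/m$ (otherwise the conclusion is vacuous as $|I_0|$ is a fixed constant), and apply the argument below on the complement.

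For the complementary region, where $|\pp\cdot\ff'(s)|$ is small, the inequality above forces $\max_{2\le k\le m}|\pp\cdot\ff^{(k)}(s)| \ge \rho_0$, i.e. $\max_{1\le j\le m-1}|\phi^{(j)}(s)|\ge\rho_0$. A standard compactness/covering argument then lets us cover $I_0$ by boundedly many subintervals on each of which a single $\phi^{(j)}$, $1\le j\le m-1$, satisfies $|\phi^{(j)}|\ge\rho_0/2$ say (using continuity of the $\phi^{(j)}$ and uniform bounds on higher derivatives to control how fast each can change); applying Lemma~\ref{Pjartli} with $k=j\le m-1$ on each such subinterval gives $|\{|\phi|\le\delta\}\cap(\text{subinterval})| \ll (\delta/\rho_0)^{1/j} \ll (\delta/\|\pp\|)^{1/(m-1)}$ after restricting to $\delta/\|\pp\| \le \rho_0$, since $t^{1/j}\le t^{1/(m-1)}$ for $t\le 1$ and $j\le m-1$. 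Summing over the boundedly many subintervals yields \eqref{nondegenerate}. The main (and only genuine) obstacle is the uniform lower bound on $\max_k|\pp\cdot\ff^{(k)}(s)|$ over unit $\pp$; this is where nondegeneracy enters, and it is exactly what the Wronskian bound from Remark~\ref{remark1} delivers.
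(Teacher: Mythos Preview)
Your proposal is correct and follows essentially the same approach as the paper's proof: both deduce the Wronskian lower bound $\max_{1\le k\le m}|\pp\cdot\ff^{(k)}(s)|\gg\|\pp\|$, reduce to a fixed dominating index $k$ on subintervals, and then invoke Pjartli's Lemma~\ref{Pjartli}. Your write-up is in fact somewhat more explicit than the paper's (which simply says ``by continuity and replacing $I_0$ with a smaller interval if necessary'') in justifying that the covering can be made uniform in $\pp$, and in handling the exponent $1/j\ge 1/(m-1)$ when $j<m-1$; one small caveat is that for $j=m-1$ the Lipschitz control of $\phi^{(j)}$ via $\phi^{(j+1)}=\pp\cdot\ff^{(m+1)}$ is not available under the standing $C^m$ hypothesis, so you should appeal to uniform continuity of $\ff^{(m)}$ on the compact $I_0$ (or to joint compactness of $S^{m-1}\times I_0$) rather than to a derivative bound there.
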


\begin{proof}
If $\pp=\bf0$, \eqref{nondegenerate} is trivially true since the right hand side is $+\infty$, so we can assume that $\pp\neq\bf0$. Since the Wronskian of $\ff'$ is bounded away from $0$ and $\ff^{(k)}$ is bounded on $I_0$ for every $1\le k\le m$, we have that $\max_{1\le k\le m}|\pp \cdot \ff^{(k)}(\tbase)|\gg \|\pp\|$ at each $\tbase\in I_0$. By continuity and replacing $I_0$ with a smaller interval if necessary, we can assume without loss of generality that the above maximum is attained on the same $k$ for each $\tbase\in I_0$. If $k=1$ then \eqref{nondegenerate} becomes trivial, otherwise \eqref{nondegenerate} follows from Lemma~\ref{Pjartli}.
\end{proof}

Combining Lemma~\ref{lemmanondegenerate} and \eqref{B_Q} gives the following estimate
\begin{equation}
\label{estimatesum}
|B_Q| \lesssim \sum_{\substack{(\pp,\qq) \in \LambdaQ^*\butnot\{\0\} \\ \|\pp\| \leq 2C_1/\Psi(Q) \\ \|\qq\| \leq C_1}} \left(\frac{1}{\Theta(Q) \|\pp\|}\right)^{1/(\pdim-1)}.
\end{equation}

Now we obtain a non-trivial bound on the sum in \eqref{estimatesum} under an additional assumption on $\LambdaQ^*$, namely \eqref{assump1} below. Note that, in general, within \eqref{estimatesum} there is no guarantee that $\pp\neq\bf0$ for all indices $(\pp,\qq)$ and so this estimate may be trivial.

\begin{lemma}
\label{lemmasummary}
Let $\bfalpha$, $\psi$, and $\ff:I_0\to \CC$ be as at the start of this section, and let $\theta,\Delta:\QQ\to \Rp$ be functions such that $Q^{-\qdim/\pdim} \leq \theta(Q) = o(\psi^{1/2}(Q))$ and $\Delta(Q) \leq 1$. Then, for all sufficiently large $Q\in\QQ$ such that
\begin{equation}\label{assump1}
\Lambda_Q^*\cap 2R'_Q = \{\0\}\,,\qquad\text{where}\qquad
R' = R'_Q = \RR(\Delta(Q),C_1)\,,
\end{equation}
we have that
\begin{equation}
\label{BQbound}
|B_Q| \lesssim \frac{1}{\Delta^\pdim(Q)} \left(\frac{\Psi(Q)}{\Theta(Q)}\right)^{1/(\pdim-1)} \frac1{\Psi^\pdim(Q)}\,.
\end{equation}
\end{lemma}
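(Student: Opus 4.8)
The plan is to start from the estimate \eqref{estimatesum} and partition the lattice points $(\pp,\qq)\in\LambdaQ^*\butnot\{\0\}$ appearing in the sum according to the dyadic size of $\|\pp\|$. For $\pp = \0$ the contribution to \eqref{estimatesum} is vacuous (the summand is $+\infty$ by convention, but such terms cannot actually occur under \eqref{assump1}): indeed if $(\0,\qq)\in\LambdaQ^*$ with $\|\qq\|\le C_1$ and $\qq\ne\0$, then since $\Delta(Q)\le 1$ we have $(\0,\qq)\in 2R'_Q\cap\LambdaQ^*\butnot\{\0\}$, contradicting \eqref{assump1}. So under \eqref{assump1} every lattice point $(\pp,\qq)$ in \eqref{estimatesum} has $\pp\ne\0$, and moreover $\|\pp\| > 2\Delta(Q)$ by the same reasoning (if $\|\pp\|\le 2\Delta(Q)$ and $\|\qq\|\le C_1$ then $(\pp,\qq)\in 2R'_Q\cap\LambdaQ^*\butnot\{\0\}$). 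This is the key structural consequence of \eqref{assump1}: it forces a lower bound $\|\pp\| \gg \Delta(Q)$ on all the relevant lattice points.

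Next I would count lattice points. For a dyadic range $2^{j}\Delta(Q) < \|\pp\| \le 2^{j+1}\Delta(Q)$ with $0 \le j \lesssim \log(1/\Delta(Q)) + \log(1/\Psi(Q))$, I want to bound the number of $(\pp,\qq)\in\LambdaQ^*$ with $\|\pp\|\le 2^{j+1}\Delta(Q)$ and $\|\qq\|\le C_1$. The region $\RR(2^{j+1}\Delta(Q),C_1) \asymp 2^{j+1} R'_Q$ up to constants in the $\qq$-directions, and since $\LambdaQ^*$ has covolume $\asymp 1$ while its first minimum with respect to $R'_Q$ is $\gtrsim 1$ by \eqref{assump1} (via the same argument: $2R'_Q\cap\LambdaQ^*=\{\0\}$ means $\lambda_1(R'_Q;\LambdaQ^*)\ge 2$, hence all later minima are $\gtrsim 1$ too, up to constants depending on $m,n$), Minkowski's second theorem gives $\#(\LambdaQ^*\cap 2^{j+1}R'_Q) \lesssim (2^{j+1})^{m+n} = 2^{(j+1)(m+n)}$. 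Wait — this needs care: only the $\pp$-directions are being dilated, the $\qq$-directions stay of size $C_1$; so the right comparison is with $\RR(2^{j+1}\Delta(Q),C_1)$, which is a box whose volume relative to the covolume of $\LambdaQ^*$ together with the lower bound on the minima yields $\#\lesssim 2^{j m}$ lattice points in the $j$-th dyadic shell (the $\qq$-count stays $O(1)$ because the minima of $\LambdaQ^*$ in all directions are $\gtrsim 1$). I would make this precise by the standard fact that for a lattice $\Lambda$ with $\lambda_1(\Lambda;R)\gtrsim 1$ one has $\#(\Lambda\cap tR) \lesssim \max(1,t)^{\dim}$ applied to the anisotropic dilate.

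Then I would plug the dyadic count into \eqref{estimatesum}. On the $j$-th shell, $\|\pp\|\asymp 2^j\Delta(Q)$, so each summand is $\asymp \big(\Theta(Q) 2^j\Delta(Q)\big)^{-1/(m-1)}$, and summing over the $\lesssim 2^{jm}$ points in the shell gives a contribution $\lesssim 2^{jm}\big(\Theta(Q)2^j\Delta(Q)\big)^{-1/(m-1)} = 2^{j(m - 1/(m-1))}\big(\Theta(Q)\Delta(Q)\big)^{-1/(m-1)}$. Since $m\ge 2$ we have $m - 1/(m-1) > 0$, so the geometric sum over $j$ up to $j_{\max}$ with $2^{j_{\max}}\asymp 1/(\Delta(Q)\Psi(Q))$ is dominated by its top term, giving
\[
|B_Q| \lesssim \Big(\frac{1}{\Delta(Q)\Psi(Q)}\Big)^{m - \frac{1}{m-1}} \big(\Theta(Q)\Delta(Q)\big)^{-\frac{1}{m-1}}
= \frac{1}{\Delta^m(Q)\Psi^m(Q)} \Big(\frac{\Psi(Q)}{\Theta(Q)}\Big)^{\frac{1}{m-1}},
\]
after collecting the powers of $\Delta(Q)$, $\Psi(Q)$ and $\Theta(Q)$ — a routine exponent bookkeeping which yields exactly \eqref{BQbound}. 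The main obstacle I anticipate is getting the lattice-point count in the anisotropic box right: one must use \eqref{assump1} not just to rule out $\pp=\0$ but to control all successive Minkowski minima of $\LambdaQ^*$ (so that dilating only the $\pp$-coordinates costs $2^{jm}$ rather than $2^{j(m+n)}$), and one must double-check that the upper cutoff $2^{j_{\max}}\asymp (\Delta(Q)\Psi(Q))^{-1}$ is the correct one, coming from the constraint $\|\pp\| \le 2C_1/\Psi(Q)$ in \eqref{estimatesum}. Everything else is a geometric series and exponent arithmetic.
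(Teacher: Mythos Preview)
Your approach is correct and leads to the same bound, but the execution differs from the paper's. The paper avoids the dyadic decomposition entirely: it introduces the auxiliary standard lattice $\Gamma = \tfrac{\Delta(Q)}{m}\Z^m\times\{\0\}$, rounds each $(\pp,\qq)\in\Lambda_Q^*\setminus\{\0\}$ appearing in \eqref{estimatesum} to a nearby $(\pp',\0)\in\Gamma$ with $(\pp,\qq)\in(\pp',\0)+R'_Q$, and uses \eqref{assump1} to conclude this rounding map is injective (two preimages would differ by an element of $2R'_Q\cap\Lambda_Q^*$). The sum over $\Lambda_Q^*\setminus\{\0\}$ is then dominated by the corresponding sum over $\Gamma\setminus\{\0\}$, which is estimated by a single radial integral $\int_{\Delta(Q)}^{C/\Psi(Q)} p^{m-1-1/(m-1)}\,\dee p$. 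This is slicker than your dyadic sum but proves exactly the same thing; indeed your count ``$\lesssim 2^{jm}$ lattice points in the $j$-th shell'' is precisely what the injectivity of the rounding map encodes.

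One small gap to flag: the ``standard fact'' you cite, $\#(\Lambda\cap tR)\lesssim t^{\dim}$ when $\lambda_1(R;\Lambda)\gtrsim1$, is for \emph{homothetic} dilates $tR$, whereas your box $\RR(2^{j+1}\Delta(Q),C_1)$ is an \emph{anisotropic} dilate of $R'_Q$ (only the $\pp$-coordinates expand). The statement you want is still true, but the correct justification is the covering argument implicit in the paper's proof: $\RR(2^{j+1}\Delta(Q),C_1)$ is covered by $\asymp 2^{jm}$ translates of $R'_Q$, and \eqref{assump1} forces each translate to contain at most one point of $\Lambda_Q^*$. Once you replace your appeal to successive minima with this covering step, your argument is complete and equivalent to the paper's.
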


\begin{proof}
Let $\Gamma = \tfrac{\Delta(Q)}{\pdim} \Z^\pdim\times\{\0\}$. Fix $(\pp,\qq)\in\LambdaQ^*\butnot\{\0\}$ such that $\|\pp\| \leq 2C_1/\Psi(Q)$ and $\|\qq\| \leq C_1$, that is, $(\pp,\qq)$ is the index of a term in \eqref{estimatesum}. Since $\tfrac{\Delta(Q)}{\pdim} \Z^\pdim$ is $\Delta(Q)$-dense in $\R^m$, there exists $(\pp',\0)\in \Gamma$  such that $(\pp,\qq)\in (\pp',\0) + R'$. On the other hand, since $\LambdaQ^*\cap 2R' = \{\0\}$, the map $(\pp,\qq)\mapsto (\pp',\0)$ is injective, and since $\0\mapsto\0$ it follows that $\LambdaQ^*\butnot\{\0\} \mapsto \Gamma\butnot\{\0\}$. Also, by \eqref{assump1}, we necessarily have that $\|\pp\|\ge \Delta(Q)$. Then, since $\Psi(Q),\Delta(Q) \lesssim 1$, by the triangle inequality, we have that
\begin{align*}
\|\pp'\| &\leq 2C_1/\Psi(Q) + \Delta(Q) \leq C_2/\Psi(Q),\\[0ex]
\|\pp\| &\geq \max\big(\Delta(Q),\|\pp'\| - \Delta(Q)\big) \geq \tfrac12 \|\pp'\|.
\end{align*}
Now, by \eqref{estimatesum}, we obtain that
\begin{align*}
|B_Q| &\lesssim \sum_{\substack{(\pp',\0) \in \Gamma\butnot\{\0\} \\ \|\pp'\| \leq C_2/\Psi(Q)}} \left(\frac{1}{\Theta(Q) \|\pp'\|}\right)^{1/(\pdim-1)}\\[1ex]
&\asymp \frac{1}{\Delta^\pdim(Q)} \left(\frac{1}{\Theta(Q)}\right)^{1/(\pdim-1)} \int_{\Delta(Q) \leq \|\pp\| \leq C_2/\Psi(Q)} \frac{1}{\|\pp\|^{1/(\pdim-1)}} \;\dee\pp\\[1ex]
&\asymp \frac{1}{\Delta^\pdim(Q)} \left(\frac{1}{\Theta(Q)}\right)^{1/(\pdim-1)}
\int_{\Delta(Q)}^{C_2/\Psi(Q)} \frac{1}{p^{1/(\pdim-1)}} p^{\pdim-1} \;\dee p\\[1ex]
&\asymp \frac{1}{\Delta^\pdim(Q)} \left(\frac{\Psi(Q)}{\Theta(Q)}\right)^{1/(\pdim-1)}
\frac1{\Psi^\pdim(Q)}.
\end{align*}
\end{proof}

In order to apply the above findings we now state and prove several lemmas that will be used to verify conditions \eqref{assump1} and \eqref{notpsiapprox}.

\begin{lemma}
\label{lemmaRprime1}
Let $\delta$ be an approximation function, and suppose that the transpose $\bfalpha^T$ is not $\delta$-approximable. Let
\begin{equation}
\label{Deltadef}
\Delta(Q) = \frac12 Q^{-\qdim/\pdim} \delta^{-1}\left(\frac{2C_1}{Q}\right).
\end{equation}
Then for all $Q\geq 1$ sufficiently large, \eqref{assump1} holds.
\end{lemma}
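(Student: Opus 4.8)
The plan is to turn condition \eqref{assump1} into an explicit statement about simultaneous Diophantine approximation of the transpose $\bfalpha^{T}$, and then use the hypothesis ``$\bfalpha^{T}$ is not $\delta$-approximable'' in two complementary ways: it forbids both infinitely many good rational approximations to $\bfalpha^{T}$ and any nontrivial exact relation $\bfalpha^{T}\aa\in\Z^n$. The first step is to compute the polar lattice. Since $\Lambda_Q = g_Q u_\bfalpha\Z^{m+n}$, we have $\Lambda_Q^{*} = ((g_Q u_\bfalpha)^{-1})^{T}\Z^{m+n}$, and because $g_Q$ is diagonal while $u_\bfalpha$ is unipotent upper-triangular a short block computation gives
\[
\Lambda_Q^{*} = \big\{\,\big(Q^{-n/m}\aa,\ Q(\bb-\bfalpha^{T}\aa)\big) : \aa\in\Z^m,\ \bb\in\Z^n\,\big\},
\]
a parameterisation which sends $\0$ to $\0$ and nonzero integer vectors to nonzero lattice points. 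Substituting this into \eqref{assump1} and using the definition \eqref{Deltadef} of $\Delta$, condition \eqref{assump1} becomes the assertion that there is no $(\aa,\bb)\in(\Z^m\times\Z^n)\setminus\{\0\}$ with
\[
\|\aa\|\le\delta^{-1}(2C_1/Q)\qquad\text{and}\qquad\|\bfalpha^{T}\aa-\bb\|\le 2C_1/Q .
\]

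Suppose, toward a contradiction, that such a pair $(\aa,\bb)$ exists for some large $Q$. If $\aa=\0$ then $\bb\neq\0$ and $Q\|\bb\|\le 2C_1$, which is impossible once $Q>2C_1$; so assume $\aa\neq\0$. Here I would bring in the hypothesis. Since $\bfalpha^{T}$ is not $\delta$-approximable, there are only finitely many $(\pp',\qq')\in\Z^n\times(\Z^m\setminus\{\0\})$ with $\|\bfalpha^{T}\qq'+\pp'\|\le\delta(\|\qq'\|)$; in particular $\bfalpha^{T}\qq'\notin\Z^n$ for every $\qq'\in\Z^m\setminus\{\0\}$ (otherwise taking integer multiples of an exact relation would give infinitely many solutions), and there is $Q_0\ge 1$ with $\|\bfalpha^{T}\qq'+\pp'\|>\delta(\|\qq'\|)$ whenever $\|\qq'\|>Q_0$. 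Moreover, Dirichlet's theorem applied to the $n\times m$ matrix $\bfalpha^{T}$ forces $\delta(q)\to 0$, so $\delta^{-1}$ is finite-valued (hence $\Delta(Q)$ is well defined for large $Q$) and $\delta(x)\ge y$ whenever $x\le\delta^{-1}(y)$.

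It remains to split on the size of $\aa$. If $1\le\|\aa\|\le Q_0$, then $\aa$ ranges over a fixed finite set on which $c_0:=\min\,\mathrm{dist}(\bfalpha^{T}\aa,\Z^n)>0$, so $\|\bfalpha^{T}\aa-\bb\|\ge c_0$, forcing $Q\le 2C_1/c_0$ --- impossible for $Q$ large. If $\|\aa\|>Q_0$, then $\|\aa\|\le\delta^{-1}(2C_1/Q)$ gives $\delta(\|\aa\|)\ge 2C_1/Q\ge\|\bfalpha^{T}\aa-\bb\|$, so $(\pp',\qq')=(-\bb,\aa)$ is a solution of $\|\bfalpha^{T}\qq'+\pp'\|\le\delta(\|\qq'\|)$ with $\|\qq'\|>Q_0$, contradicting the choice of $Q_0$. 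Hence for every $Q>\max(2C_1,\,2C_1/c_0)$ no such $(\aa,\bb)$ exists, which is precisely \eqref{assump1}.

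The one genuinely delicate point is that ``not $\delta$-approximable'' rules out \emph{infinitely many} good approximations and not any single one, so one bad pair $(\aa,\bb)$ cannot on its own contradict the hypothesis; the small/large dichotomy above is exactly the device that circumvents this, treating small $\aa$ by the qualitative absence of exact rational relations and large $\aa$ by the quantitative tail estimate packaged in $Q_0$. Everything else is routine: the block computation of $\Lambda_Q^{*}$ is mechanical, and the elementary facts about $\delta^{-1}$ invoked above follow immediately from $\delta$ being nonincreasing, continuous, and vanishing at infinity.
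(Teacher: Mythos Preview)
Your proof is correct and follows essentially the same approach as the paper: compute $\Lambda_Q^*$ explicitly, translate membership in $2R'_Q$ into the pair of inequalities $\|\aa\|\le\delta^{-1}(2C_1/Q)$ and $\|\bfalpha^T\aa-\bb\|\le 2C_1/Q$, and then invoke the hypothesis that $\bfalpha^T$ is not $\delta$-approximable. The only organisational difference is that the paper argues by contradiction assuming failure for \emph{arbitrarily large} $Q$ and then observes that either some $\bfalpha^T\pp-\qq=0$ (whence integer multiples give infinitely many solutions) or the pairs vary with $Q$ (whence again infinitely many solutions); you instead pre-extract $Q_0$ and $c_0$ from the hypothesis and give an explicit threshold $Q>\max(2C_1,2C_1/c_0)$, which amounts to the same dichotomy unpacked in advance.
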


\begin{proof}
By contradiction, suppose that $\Lambda_Q^* \cap 2R'_Q \neq \{\0\}$ for arbitrarily large $Q$. Fix such a $Q$. Since $\Lambda_Q^* = ((g_Q u_\bfalpha)^T)^{-1} \Z^{m+n}$, it follows that there exists $(\pp,\qq)\in\Z^\dimsum\butnot\{\0\}$ such that $((g_Q u_\bfalpha)^T)^{-1}(\pp,\qq) \in 2 R'_Q$. Now
\[
((g_Q u_\bfalpha)^T)^{-1}(\pp,\qq) = \big(Q^{-\qdim/\pdim} \pp, Q (\qq - \bfalpha^T\cdot\pp)\big)
\]
and thus
\[
Q^{-\qdim/\pdim} \|\pp\| \leq 2\Delta(Q), \;\;\;\; Q \|\qq - \bfalpha^T\cdot\pp\| \leq 2C_1.
\]
Rearranging this and applying \eqref{Deltadef} gives
\begin{equation}\label{vb3.24}
\|\qq - \bfalpha^T \cdot \pp\| \leq 2C_1/Q \leq \delta(\|\pp\|)\,.
\end{equation}
Since this has a solution $(\pp,\qq)\in\Z^\dimsum\butnot\{\0\}$ for arbitrarily large $Q$, either $\qq - \bfalpha^T \cdot \pp=0$, or there exist infinitely many such pairs $(\pp,\qq)$ satisfying \eqref{vb3.24}. In either case $\bfalpha^T$ is $\delta$-approximable, contrary to the conditions on the lemma.
\end{proof}

The following lemma is similar, and serves the purpose of verifying \eqref{notpsiapprox}, which is required to use the full power of Lemma~\ref{lemma1}.

\begin{lemma}
\label{lemmaR}
Let $\phi$ be an approximation function, and suppose that $\bfalpha$ is not $\phi$-approximable. Then for all $Q\geq 1$ sufficiently large, we have that
$$\Lambda_Q\cap R''_Q = \{\0\}\,,\qquad\text{where}\qquad
R''_Q = \RR(2^{-n/m} \Phi(2Q),2).
$$
In particular, if $2^{-n/m} \Phi(2Q) \geq 3\Psi(Q)$ then \eqref{notpsiapprox} holds, that is, $\Lambda_Q\cap 2 R_{3/2} = \{\0\}$, since $2 R_{3/2} = \RR(3\Psi(Q),2)$.
\end{lemma}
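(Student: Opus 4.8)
The plan is to mimic the proof of Lemma~\ref{lemmaRprime1}, but now with the lattice $\Lambda_Q$ in place of its polar lattice $\Lambda_Q^*$. Suppose for contradiction that $\Lambda_Q \cap R''_Q \neq \{\0\}$ for arbitrarily large $Q\in\QQ$. Fix such a $Q$. Since $\Lambda_Q = g_Q u_\bfalpha \Z^{m+n}$, there exists $(\pp,\qq)\in\Z^{m+n}\butnot\{\0\}$ with $g_Q u_\bfalpha (\qq,\pp) \in R''_Q$; here I must be careful about the ordering of the coordinates, matching the block structure of $g_Q$ and $u_\bfalpha$ in Notation~\ref{notationQRcap}, so that
\[
g_Q u_\bfalpha (\pp,\qq) = \big(Q^{n/m}(\pp + \bfalpha\qq),\, Q^{-1}\qq\big).
\]
The membership in $R''_Q = \RR(2^{-n/m}\Phi(2Q),2)$ then unwraps to the two inequalities
\[
Q^{n/m}\|\pp + \bfalpha\qq\| \leq 2^{-n/m}\Phi(2Q), \qquad Q^{-1}\|\qq\| \leq 2,
\]
i.e. $\|\qq\|\leq 2Q$ and $\|\pp+\bfalpha\qq\| \leq 2^{-n/m} Q^{-n/m}\Phi(2Q) = (2Q)^{-n/m}\Phi(2Q) = \phi(2Q)$, using $\Phi(q)=q^{n/m}\phi(q)$.

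Next I would distinguish the two cases exactly as in Lemma~\ref{lemmaRprime1}. If $\pp + \bfalpha\qq = \0$, then in particular $\bfalpha$ is (homogeneously, hence inhomogeneously with $\bfbeta=\0$) $\phi$-approximable in the trivial sense that the estimate holds with equality --- one has to phrase this so it genuinely contradicts ``$\bfalpha$ is not $\phi$-approximable'', noting $\qq\neq\0$ since otherwise $(\pp,\qq)=\0$. Here $\qq\ne\0$ follows because if $\qq=\0$ then $\pp+\bfalpha\qq=\pp$ and $\|\pp\|\le\phi(2Q)\to 0$ forces $\pp=\0$ too. Otherwise, if $\pp+\bfalpha\qq\ne\0$, then since solutions $(\pp,\qq)$ with $0<\|\qq\|\le 2Q$ and $\|\pp+\bfalpha\qq\|\le\phi(2Q)\le\phi(\|\qq\|)$ (using that $\phi$ is nonincreasing and $\|\qq\|\le 2Q$) exist for arbitrarily large $Q$, and since $\phi(q)\to 0$, these must comprise infinitely many distinct pairs; hence $\bfalpha$ is $\phi$-approximable, contradiction. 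One small point to handle carefully: a single fixed pair $(\pp,\qq)$ could in principle be picked up for many values of $Q$, so I should argue that either some fixed nonzero pair already witnesses $\phi$-approximability (if $\pp+\bfalpha\qq=\0$), or else the nonzero values $\|\pp+\bfalpha\qq\|$ are bounded below along any finite set of pairs, forcing genuinely new pairs as $Q\to\infty$ --- the same dichotomy used in Lemma~\ref{lemmaRprime1}.

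For the final sentence of the statement: given $2^{-n/m}\Phi(2Q)\geq 3\Psi(Q)$, we have $R''_Q = \RR(2^{-n/m}\Phi(2Q),2) \supset \RR(3\Psi(Q),2) = 2R_{3/2}$ (since $R_{3/2}=\RR(\tfrac32\Psi(Q),1)$, so $2R_{3/2}=\RR(3\Psi(Q),2)$), and therefore $\Lambda_Q \cap 2R_{3/2} \subset \Lambda_Q \cap R''_Q = \{\0\}$, which is precisely condition \eqref{notpsiapprox}. The main obstacle, such as it is, is purely bookkeeping: getting the conjugation $g_Q u_\bfalpha (\pp,\qq)$ and the coordinate ordering right so that the homogeneous approximation quantity $\|\pp + \bfalpha\qq\|$ emerges cleanly, and making the ``infinitely many pairs'' argument airtight rather than merely asserting it. There is no analytic difficulty here --- it is a direct geometry-of-numbers translation, strictly easier than Lemma~\ref{lemmaRprime1} since no transpose and no $\delta^{-1}$ appears.
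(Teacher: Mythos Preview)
Your proposal is correct and follows essentially the same argument as the paper: assume a nontrivial lattice point in $R''_Q$ for arbitrarily large $Q$, compute $g_Q u_\bfalpha(\pp,\qq)=\big(Q^{n/m}(\bfalpha\qq+\pp),Q^{-1}\qq\big)$, and rearrange to get $\|\bfalpha\qq+\pp\|\le\phi(2Q)\le\phi(\|\qq\|)$, contradicting non-$\phi$-approximability via the same dichotomy as in Lemma~\ref{lemmaRprime1}. You are in fact slightly more explicit than the paper in spelling out the infinitely-many-pairs argument and the ``In particular'' clause, which the paper leaves as immediate.
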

Note that the condition  $2^{-n/m} \Phi(2Q) \geq 3\Psi(Q)$ is satisfied for all $Q$ sufficiently large if $\psi = \psii$ and $\phi = \phi_i$ are as in \eqref{phidef}.
\begin{proof}
By contradiction, suppose that $\Lambda_Q \cap R''_Q \neq \{\0\}$ for arbitrarily large $Q$. Fix such a $Q$. Then there exists $(\pp,\qq)\in\Z^\dimsum\butnot\{\0\}$ such that $g_Q u_\bfalpha(\pp,\qq) \in R''_Q$. Now
\[
g_Q u_\bfalpha(\pp,\qq) = \big(Q^{\qdim/\pdim}(\bfalpha\cdot\qq + \pp), Q^{-1} \qq\big)
\]
and thus
\[
Q^{\qdim/\pdim} \|\bfalpha\cdot\qq + \pp\| \leq 2^{-n/m} \Phi(2Q), \;\;\;\; Q^{-1} \|\qq\| \leq 2.
\]
Similarly to the proof of Lemma~\ref{lemmaRprime1}, rearranging this gives
\[
\|\bfalpha\cdot\qq + \pp\| \leq \phi(2Q) \leq \phi(\|\qq\|)
\]
for infinitely many $(\pp,\qq)$, thus implying that $\bfalpha$ is $\phi$-approximable, a contradiction.
\end{proof}

Finally, in the proof of Theorem~\ref{theorem3}, we will need the following version of Lemma~\ref{lemmaRprime1} that utilizes a different Diophantine condition on $\bfalpha$.

\begin{lemma}
\label{lemmanonsingular}
Suppose that $\bfalpha$ is nonsingular. Then there exists $\epsilon > 0$ such that if we let $\Delta(Q) = \epsilon$ for all $Q$, then for infinitely many $Q \in \QQ$, \eqref{assump1} holds.
\end{lemma}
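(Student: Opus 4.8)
The plan is to negate the condition \eqref{assump1} and translate its failure into a statement about the matrix $\bfalpha$ that contradicts nonsingularity, in the same spirit as the proof of Lemma~\ref{lemmaRprime1} but working directly with the lattice $\Lambda_Q$ (rather than its dual). Concretely, I would argue by contradiction: suppose that for \emph{every} $\epsilon > 0$, there are only finitely many $Q \in \QQ$ for which \eqref{assump1} holds with $\Delta(Q) \equiv \epsilon$; equivalently, for every $\epsilon>0$, all sufficiently large $Q\in\QQ$ satisfy $\Lambda_Q^* \cap 2R'_Q \neq \{\0\}$, where $R'_Q = \RR(\epsilon, C_1)$. Unwinding, as in Lemma~\ref{lemmaRprime1}, $\Lambda_Q^* = ((g_Q u_\bfalpha)^T)^{-1}\Z^{m+n}$, and a nonzero point of $\Lambda_Q^* \cap 2R'_Q$ corresponds to $(\pp,\qq)\in\Z^{m+n}\butnot\{\0\}$ with
\[
Q^{-n/m}\|\pp\| \leq 2\epsilon, \qquad Q\|\qq - \bfalpha^T\pp\| \leq 2C_1.
\]
The first inequality gives $\|\pp\| \leq 2\epsilon Q^{n/m}$, and the second gives $\|\qq - \bfalpha^T\pp\| \leq 2C_1/Q$.

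Next I would observe that $\pp \neq \0$: if $\pp = \0$ then the second inequality forces $\|\qq\| \leq 2C_1/Q < 1$ for large $Q$, hence $\qq = \0$, contradicting $(\pp,\qq)\neq\0$. So for every $\epsilon>0$ and every sufficiently large $Q\in\QQ$ there is $\pp\in\Z^m\butnot\{\0\}$ with $0 < \|\pp\| \leq 2\epsilon Q^{n/m}$ and $\mathrm{dist}(\bfalpha^T\pp, \Z^n) \leq 2C_1/Q$. Setting $P := 2\epsilon Q^{n/m}$ (so $Q = (P/2\epsilon)^{m/n}$ and $2C_1/Q = 2C_1 (2\epsilon)^{m/n} P^{-m/n}$), this says: for every $\epsilon > 0$, all sufficiently large $P$ (of the form $2\epsilon\cdot 2^{km/n}$) admit $\pp\in\Z^m\butnot\{\0\}$ with $0<\|\pp\|\leq P$ and $\mathrm{dist}(\bfalpha^T\pp,\Z^n)\leq 2C_1(2\epsilon)^{m/n}P^{-m/n}$. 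Since $\epsilon$ is arbitrary, the constant $2C_1(2\epsilon)^{m/n}$ can be made as small as we like; a routine interpolation over the (geometrically spaced, hence not too sparse) values of $P$ shows that for every $\eta>0$ there is $P_\eta$ such that for all $P\geq P_\eta$ one can solve $0<\|\pp\|\leq P$, $\mathrm{dist}(\bfalpha^T\pp,\Z^n)\leq \eta P^{-m/n}$. By the very definition of singularity (applied to the $n\times m$ matrix $\bfalpha^T$, whose Dirichlet exponent is $m/n$), this means $\bfalpha^T$ is singular. But $\bfalpha^T$ is singular if and only if $\bfalpha$ is singular (singularity is transpose-invariant, cf.\ the Khintchine transference picture), contradicting the hypothesis that $\bfalpha$ is nonsingular. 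Hence there must exist some $\epsilon>0$ for which \eqref{assump1} with $\Delta\equiv\epsilon$ holds for infinitely many $Q\in\QQ$, as claimed.

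The main obstacle is the interpolation step: the contradiction hypothesis gives, for each fixed $\epsilon$, solvability only along the sparse set of scales $Q = 2^k$, and with an error term $2C_1/Q$ whose \emph{implied} smallness (relative to the Dirichlet rate $P^{-m/n}$) degrades as $\epsilon\to 0$ at rate $(2\epsilon)^{m/n}$; one must combine the families obtained from a sequence $\epsilon_j\to 0$ and fill in the gaps between consecutive powers of $2$, using that if $0<\|\pp\|\leq P$ works at scale $Q$ then the same $\pp$ works for all $P' \in [P, 2P]$ with only a bounded worsening of the error constant. This monotonicity in $P$, together with $\epsilon_j\to 0$, yields the genuine singularity statement $\mathrm{dist}(\bfalpha^T\pp,\Z^n) \leq \eta P^{-m/n}$ for all large $P$ and all $\eta>0$. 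A secondary point that should be stated carefully is the transpose-invariance of singularity, for which one can cite the standard reference (e.g.\ \cite{Cassels} or the argument via Khintchine's transference principle); alternatively one can phrase the whole argument intrinsically on the lattice side and invoke that $\bfalpha$ singular $\iff$ $g_Q u_\bfalpha \Z^{m+n}$ eventually always meets an arbitrarily small ball $\iff$ $\bfalpha^T$ singular, which is well known.
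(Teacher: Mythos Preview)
Your proposal is correct and rests on the same underlying idea as the paper: unwind the condition $\Lambda_Q^*\cap 2R'_Q\neq\{\0\}$ into a Diophantine inequality for $\bfalpha^T$, and then use that (non)singularity is transpose-invariant. The paper organises the argument differently and more economically: instead of arguing by contradiction and then interpolating across the dyadic scales $Q\in\QQ$, it works forward---from nonsingularity it extracts a single $\epsilon'>0$ together with arbitrarily large $Q$ at which the system has no nontrivial solution, rounds to a nearby element of $\QQ$, and reads off \eqref{assump1} directly---so no interpolation is needed. On one point you are actually more careful than the paper: the paper's final ``Equivalently'' quietly identifies a condition on $\bfalpha$ with the dual-lattice condition, which (as you correctly compute) is really a condition on $\bfalpha^T$; the transpose-invariance you spell out is therefore being used there implicitly as well. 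If you want the shortest clean proof, start with ``$\bfalpha$ nonsingular $\Rightarrow$ $\bfalpha^T$ nonsingular'', take $\epsilon'>0$ and arbitrarily large $P$ with no nonzero solution to $\|\pp\|\le P$, $\|\qq-\bfalpha^T\pp\|\le \epsilon' P^{-m/n}$, and rescale $P$ to $Q\in\QQ$; this bypasses both the contradiction and the interpolation.
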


\begin{proof}
Since $\bfalpha$ is nonsingular, there exist $\epsilon' > 0$ and arbitrarily large $Q$ such that the system of inequalities
\[
\|\qq\| \leq Q, \;\;\;\; \|\bfalpha\cdot\qq + \pp\| \leq \epsilon' Q^{-\qdim/\pdim}
\]
has no non-trivial integer solution $(\pp,\qq)\in \Z^\dimsum\butnot\{\0\}$. It follows that there exist arbitrarily large $Q\in\QQ$ such that the system of inequalities
\[
\|\qq\| \leq 2 C_1 Q, \;\;\;\; \|\bfalpha\cdot\qq + \pp\| \leq \epsilon' (4 C_1 Q)^{-\qdim/\pdim}
\]
has no nontrivial integer solution $(\pp,\qq)\in \Z^\dimsum\butnot\{\0\}$. Equivalently, if $\Delta(Q) = \epsilon := \frac12 (4C_1)^{-\qdim/\pdim} \epsilon'$, then $\Lambda_Q^* \cap 2 R'_Q = \{\0\}$.
\end{proof}

\section{Proof of Theorems \ref{theorem1}--\ref{theorem3}, part 2}

At this point the proofs of Theorems~\ref{theorem1}, \ref{theorem2}, and~\ref{theorem3} diverge. In the proof of Theorem~\ref{theorem1} we will need to apply Lemma~\ref{lemmasummary} with a large value of $\theta$, whereas in the proofs of Theorems \ref{theorem2} and~\ref{theorem3} we will need to use a relatively small value of $\theta$. We keep the same notation as in Section~\ref{sectionpart1}. We will also assume without loss of generality that $\psi(q) \geq \psi_{\doubleast}(q) := (q^n \log^2(q))^{-1/m}$ for all sufficiently large $q$, or equivalently, by definition,  that $\Psi(q) \geq \log^{-2/\pdim}(q)$.

\subsection{Proof of Theorem~\ref{theorem1}}

In view of Remarks \ref{remark1} and \ref{remark2} and Lemma~\ref{lemmadani}, proving the following  claim will complete the proof of Theorem~\ref{theorem1}.

\begin{claim}\label{claim1}
If \eqref{psiseries} converges and \eqref{NVWA} holds, then $|\ff^{-1}(\TTwist_\psi(\bfalpha))| = 0$.
\end{claim}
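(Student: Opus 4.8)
The plan is to use the Borel--Cantelli lemma: since $\ff^{-1}(\TTwist_\psi(\bfalpha)) = \limsup_{Q\to\infty} S_Q$ with $Q$ ranging over $\QQ = \{2^k\}$, it suffices to show $\sum_{Q\in\QQ} |S_Q| < \infty$. So the whole task reduces to obtaining a summable (over $Q\in\QQ$) upper bound for $|S_Q|$. By \eqref{gammaperturbation} applied with $\gamma = \theta(Q)$ and $S = S_Q$, together with Lemma~\ref{lemmaSQs}, we have
\[
|S_Q| \;\asymp\; \int \big|\{\tmod\in[-1,1] : \tbase + \theta(Q)\tmod \in S_Q\}\big|\,\dee\tbase \;\lesssim\; \int_{I_0} |\TS_{Q,\tbase}(\tfrac32)|\,\dee\tbase,
\]
modulo the harmless edge effects where $B(\tbase,\theta(Q))\not\subset I_0$, which contribute $O(\theta(Q))$ and are trivially summable once $\theta(Q)$ is chosen to decay geometrically. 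The plan is then to split the integral over $I_0$ into the good points (where \eqref{LRdual} holds) and the bad points $B_Q$. On good points Lemma~\ref{lemma1} gives $|\TS_{Q,\tbase}(\tfrac32)| \lesssim \Psi^m(Q)$, which integrates to $\lesssim \Psi^m(Q)$; on bad points we use the trivial bound $|\TS_{Q,\tbase}(\tfrac32)| \leq 2$ together with the measure estimate $|B_Q|$ from Lemma~\ref{lemmasummary}. Hence
\[
|S_Q| \;\lesssim\; \Psi^m(Q) \;+\; |B_Q| \;\lesssim\; \Psi^m(Q) \;+\; \frac{1}{\Delta^m(Q)}\left(\frac{\Psi(Q)}{\Theta(Q)}\right)^{1/(m-1)}\frac{1}{\Psi^m(Q)}\,,
\]
provided $Q$ is large enough that \eqref{assump1} holds, which by Lemma~\ref{lemmaRprime1} is guaranteed once $\bfalpha^T$ is not $\delta$-approximable for an appropriate $\delta$ and $\Delta(Q)$ is defined by \eqref{Deltadef}.

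The heart of the matter is then a bookkeeping optimization: choose the free functions $\theta$ (hence $\Theta(Q) = Q^{n/m}\theta(Q)$) and $\delta$ (hence $\Delta(Q)$) so that both terms on the right are summable over $Q = 2^k$. For the first term, recall $\Psi^m(q) = q^n\psi^m(q)$, and since \eqref{psiseries} converges and $\psi$ is nonincreasing, $Q^n\psi^m(Q) \lesssim \sum_{q\leq Q} q^{n-1}\psi^m(q)$, whose values at $Q = 2^k$ are the tail increments of a convergent series and hence summable; so $\sum_{Q\in\QQ}\Psi^m(Q) < \infty$ automatically. The real work is the second term. We want to take $\theta(Q)$ — equivalently $\Theta(Q)$ — as large as possible to make $(\Psi/\Theta)^{1/(m-1)}$ small, but $\theta(Q) = o(\psi^{1/2}(Q))$ is forced by \eqref{vb3.5}; using $\psi(q)\gtrsim \psi_{\doubleast}(q) = (q^n\log^2 q)^{-1/m}$ (Remark~\ref{remark2}) we may take $\theta(Q)$ just below $\psi^{1/2}(Q)$, so $\Theta(Q) = Q^{n/m}\theta(Q) \approx Q^{n/m}\psi^{1/2}(Q) = \Psi^{1/2}(Q) Q^{n/(2m)}$ up to a slowly varying factor. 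Meanwhile $\Delta(Q) = \tfrac12 Q^{-n/m}\delta^{-1}(2C_1/Q)$, and the Diophantine hypothesis \eqref{NVWA} on $\omega(\bfalpha^T)$ is exactly what controls how small $\Delta(Q)$ can be forced to be while $\bfalpha^T$ remains not $\delta$-approximable: taking $\delta = \psi_\tau$ (so $\delta^{-1}(y) = y^{-1/\tau}$) for $\tau$ slightly larger than $\omega(\bfalpha^T)$ makes $\bfalpha^T$ not $\delta$-approximable and gives $\Delta(Q) \asymp Q^{-n/m + 1/\tau}$ up to constants. Substituting everything, the second term becomes a power of $Q$ (times logarithmic factors from the $\psi_{\doubleast}$ substitution), and one checks that the exponent is strictly negative precisely when $\tau^{-1} > \tfrac{n}{m} - \tfrac{n}{2m^2(m-1)}$, i.e. precisely under \eqref{NVWA} — this is why the constant $\tfrac{n}{2m^2(m-1)}$ appears in the statement. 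I expect the delicate part to be carrying the logarithmic factors (coming from using $\psi_{\doubleast}$ as a lower bound for $\psi$, and from $\Psi(q)\geq \log^{-2/m}(q)$) through the exponent computation without losing the strict inequality, and confirming that the borderline in \eqref{NVWA} is sharp for this method; everything else is assembly of the lemmas already proved.

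Concretely, the steps in order: (1) reduce to summability of $\sum_{Q\in\QQ}|S_Q|$ via Borel--Cantelli and \eqref{vb03.3}; (2) bound $|S_Q| \lesssim \Psi^m(Q) + |B_Q| + \theta(Q)$ using \eqref{gammaperturbation}, Lemmas~\ref{lemmaSQs}, \ref{lemma1}, and the trivial bound on bad points, with edge terms absorbed; (3) invoke Remark~\ref{remark2} to assume $\psi \geq \psi_{\doubleast}$, fix $\theta(Q)$ just below $\psi^{1/2}(Q)$ (e.g. $\theta(Q) = \psi^{1/2}(Q)/\log Q$) so that \eqref{vb3.5} holds and $\Theta(Q)$ is as large as allowed; (4) invoke Khintchine's transference / the definition of $\omega(\bfalpha^T)$ to pick $\delta = \psi_\tau$ with $\tau$ slightly exceeding $\omega(\bfalpha^T)$, so that $\bfalpha^T$ is not $\delta$-approximable and, by Lemma~\ref{lemmaRprime1}, \eqref{assump1} holds for all large $Q$ with $\Delta(Q) \asymp Q^{-n/m+1/\tau}$; (5) apply Lemma~\ref{lemmasummary} to get the displayed bound on $|B_Q|$, substitute the chosen $\theta, \Delta$, and simplify the $|B_Q|$ term to $Q^{-\eta}$ times a power of $\log Q$ for some $\eta > 0$ guaranteed by \eqref{NVWA}; (6) conclude $\sum_{Q\in\QQ}\Psi^m(Q) < \infty$ from convergence of \eqref{psiseries} and $\sum_{Q\in\QQ}|B_Q| < \infty$ from the power saving, hence $\sum_{Q\in\QQ}|S_Q|<\infty$, which by Borel--Cantelli gives $|\ff^{-1}(\TTwist_\psi(\bfalpha))| = 0$ and proves Claim~\ref{claim1}.
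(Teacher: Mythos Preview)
Your proposal is correct and follows essentially the same approach as the paper: reduce to Borel--Cantelli, bound $|S_Q|$ via \eqref{gammaperturbation} and the good/bad split, control the good part by Lemma~\ref{lemma1} and the bad part by Lemma~\ref{lemmasummary} with $\Delta$ coming from Lemma~\ref{lemmaRprime1}, then check the resulting power of $Q$ is summable precisely under \eqref{NVWA}. The only cosmetic difference is the choice of $\theta$: the paper takes the pure power $\theta(Q)=Q^{-(n+\epsilon)/(2m)}$ (with the slack $\epsilon$ in \eqref{NVWA} absorbed into the exponent), whereas you take $\theta(Q)=\psi^{1/2}(Q)/\log Q$; since Remark~\ref{remark2} pins $\psi$ between $\psi_{\doubleast}$ and $\psi_{n/m}$, the two choices differ only by logarithmic factors and lead to the same exponent computation.
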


\begin{proof}
To begin with we note that, in view of the definitions of $\QQ$ and $\Psi$ (see Section~\ref{prelim}), by Cauchy's condensation test, the convergence of \eqref{psiseries} implies that
\begin{equation}\label{Psiconverge}
\sum_{Q\in\QQ}\Psi^\pdim(Q)<\infty\,.
\end{equation}
In view of \eqref{vb03.3}, by the Borel-Cantelli Lemma, Claim~\ref{claim1} will follow on showing that
\begin{equation}\label{vb}
\sum_{Q\in\QQ}|S_Q|<\infty\,.
\end{equation}
By \eqref{NVWA}, there exists $0 < \epsilon < \qdim/2$ such that
\[
\gamma \df \left(\tfrac\qdim\pdim - \tfrac{\qdim - 2\epsilon}{2\pdim^2 (\pdim-1)}\right)^{-1} > \omega(\bfalpha^T).
\]
Let $\theta(Q): = Q^{-(\qdim+\epsilon)/2\pdim}$, so that $Q^{-\qdim/\pdim} \leq \theta(Q) = o(\psi^{1/2}(Q))$. Let $\delta(q): = q^{-\gamma}$, and let $\Delta$ be as in \eqref{Deltadef}. Then we have that $\Theta(Q) = Q^{(\qdim-\epsilon)/2\pdim}$ (see Notation~\ref{notationQRcap}) and $\Delta(Q) \asymp Q^{1/\gamma - \qdim/\pdim} = Q^{-(\qdim-2\epsilon)/2\pdim^2 (\pdim-1)}$. In particular, we have that $\Delta(Q) \leq 1$ for all sufficiently large $Q$. Since $\gamma > \omega(\bfalpha^T)$, $\bfalpha^T$ is not $\delta$-approximable. Thus, by Lemmas~\ref{lemmaRprime1} and \ref{lemmasummary}, \eqref{BQbound} holds. Thus, since $\Psi(Q) \gg \log^{-2/m}(Q)$, for all sufficiently large $Q\in\QQ$ we have that
\begin{align}
\nonumber|B_Q| &\lesssim \frac{1}{\Delta^\pdim(Q)} \left(\frac{1}{\Theta(Q)}\right)^{1/(\pdim-1)} \frac{1}{(\log^{-2/\pdim}(Q))^\pdim}\\[2ex]
&\asymp Q^{(\qdim-2\epsilon)/2\pdim(\pdim-1)} Q^{-(\qdim-\epsilon)/2\pdim(\pdim-1)} \log^2(Q) = Q^{-\epsilon/2\pdim(\pdim-1)} \log^2(Q)\,.\label{B_Qest2}
\end{align}
By \eqref{gammaperturbation},
\begin{equation}\label{vb4.2}
|S_Q|\le|B_Q|+
\frac12 \int_{\R\butnot B_Q} \big|\big\{\tmod\in [-1,1] : \tbase + \theta(Q) \tmod \in S_Q\big\}\big| \;\dee \tbase.
\end{equation}
Since $S_Q\subset I_0$, the set on the right-hand side of \eqref{vb4.2} is empty whenever $s\not\in I_1:=I_0+[-1,1]$ - a bounded interval obtained by extending $I_0$ by $1$ left and right. Hence, \eqref{vb4.2} implies that
\begin{equation}\label{vb4.6}
|S_Q|\le|B_Q|+
\frac12 \int_{I_1\butnot B_Q} \big|\big\{\tmod\in [-1,1] : \tbase + \theta(Q) \tmod \in S_Q\big\}\big| \;\dee \tbase.
\end{equation}
Now, using Lemmas~\ref{lemmaSQs} and \ref{lemma1} together with \eqref{B_Qest2} gives that
\begin{equation}\label{vb4.3}
|S_Q|\ll Q^{-\epsilon/2\pdim(\pdim-1)} \log^2(Q)+ \Psi^\pdim(Q)
\end{equation}
and since $\QQ$ is a geometric progression, by \eqref{Psiconverge}, we conclude \eqref{vb} and complete the proof.

\end{proof}

\subsection{Proof of Theorem~\ref{theorem2}}
Since in the context of Theorem~\ref{theorem2} the sum \eqref{psiseries} diverges, by Cauchy's condensation test, we have that
\begin{equation}\label{Psidiverge}
\sum_{Q\in\QQ}\Psi^\pdim(Q)=\infty\,.
\end{equation}
Further, in view of Remarks \ref{remark1} and \ref{remark2} and Lemma~\ref{lemmadani}, proving the following claim will complete the proof of Theorem~\ref{theorem2}.

\begin{claim}
\label{claim2}
Suppose that $\psi = \psii$ and $\phi = \phi_i$ are as in \eqref{phidef}, and that $\bfalpha$ is not $\phi$-approximable. Let $C>0$ be a constant. Then $|\ff^{-1}(\TTwist_{C\psi}(\alpha))| = |I_0|$.
\end{claim}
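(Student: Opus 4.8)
The plan is to establish a quantitative lower bound on the measure of each $S_Q$ (or rather a suitable subset) and on pairwise intersections $S_{Q_1}\cap S_{Q_2}$ restricted to an arbitrary subinterval $I\subset I_0$, so as to feed these into the generalized divergence Borel--Cantelli lemma (Lemma~\ref{GDBC}) and conclude that $\ff^{-1}(\TTwist_{C\psi}(\bfalpha))\cap I$ has measure at least a fixed fraction of $|I|$ for every small $I$. Once that is achieved, Lemma~\ref{lemmafullmeasure} upgrades this to full measure in $I_0$, which is exactly Claim~\ref{claim2}. Since $C\psi$ is again doubling and differs from $\psi=h_i$ only by a constant, we may as well prove the statement for $\psi=h_i$ itself (absorbing $C$ into the implied constants), keeping the normalization $\Psi(q)\gtrsim\log^{-2/m}(q)$ from the reductions in Remark~\ref{remark2}.

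First I would choose the auxiliary parameter $\theta$ \emph{small} --- concretely something like $\theta(Q)=Q^{-n/m}$ or a mild perturbation thereof, which satisfies $Q^{-n/m}\le\theta(Q)=o(\psi^{1/2}(Q))$ because $\psi=h_i$ decays only slightly faster than $q^{-n/m}$ up to logarithmic factors. With this choice $\Theta(Q)=Q^{n/m}\theta(Q)$ is essentially constant (or a slowly-growing power), so the ``bad set'' estimate from Lemma~\ref{lemmasummary} becomes favorable: taking $\delta$ to be an approximation function just barely faster-decaying than the one witnessing $\bfalpha$ being non-$\phi$-approximable, Lemma~\ref{lemmaRprime1} gives condition~\eqref{assump1}, and then \eqref{BQbound} shows $\sum_{Q\in\QQ_k}|B_Q\cap I|$ is dominated by (a constant times) $\sum_{Q\in\QQ_k}\Psi^m(Q)\,|I|$ --- in fact it should be made negligible compared to it. Meanwhile Lemma~\ref{lemmaR} (using the remark immediately after it, that $2^{-n/m}\Phi(2Q)\ge 3\Psi(Q)$ holds for $\psi=h_i$, $\phi=\phi_i$ when $Q$ is large) gives condition~\eqref{notpsiapprox}, so Lemma~\ref{lemma1} yields the \emph{two-sided} bound $|\TS_{Q,\tbase}(c)|\asymp\Psi^m(Q)$ for every good $\tbase$. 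Integrating over $s\in I$ minus the bad set via \eqref{gammaperturbation} and Lemma~\ref{lemmaSQs} then gives $|S_Q\cap I|\gtrsim\Psi^m(Q)\,|I|$ provided $\Theta(Q)$ is small enough relative to $|I|$ (so that the interval $I$ "sees" the whole fibre $[-1,1]$), which holds for $Q$ large; combined with \eqref{Psidiverge} this secures the divergence hypothesis \eqref{eqn04} after grouping the $Q$'s into blocks $\QQ_k$ with $\sum_{Q\in\QQ_k}\Psi^m(Q)\asymp 1$.

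The main obstacle is the quasi-independence estimate \eqref{eqn05}: bounding $|S_{Q_1}\cap S_{Q_2}\cap I|$ for $Q_1<Q_2$ by roughly $|S_{Q_1}\cap I|\cdot|S_{Q_2}\cap I|/|I|$. The natural approach is again the fibering formula \eqref{gammaperturbation}, this time with the finer scale $\theta(Q_2)$: for a fixed base point $s$, the condition $s+\theta(Q_2)x\in S_{Q_1}$ is, up to constants, governed by the lattice $\Lambda_{Q_1}$ and the box $R$, while $s+\theta(Q_2)x\in S_{Q_2}$ is governed by $\Lambda_{Q_2}$; one wants to show that, after conditioning on the (small) measure event $s+\theta(Q_2)x\in S_{Q_1}$, the further constraint from $Q_2$ still cuts out the expected proportion $\asymp\Psi^m(Q_2)$ of the $x$-fibre. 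The key geometric input is that the $S_{Q_1}$-condition localizes $s$ near a union of short arcs whose total length is $\asymp\Psi^m(Q_1)|I|$, and on each such arc --- provided the base point is good with respect to $Q_2$, which happens off a set controlled by Lemma~\ref{lemmasummary} applied at scale $Q_2$ --- the $S_{Q_2}$ set has the expected density inside $[-1,1]$ by the upper bound half of Lemma~\ref{lemma1}. Care is needed to handle the overlap between "good for $Q_1$" and "good for $Q_2$", and to ensure the arcs from the $Q_1$-approximation at scale $\theta(Q_2)$ are long enough to be resolved; here one uses that $\theta(Q_2)\ll\Theta(Q_1)^{-1}$-type inequalities coming from the geometric spacing of $\QQ$ and the slow growth of $\Theta$. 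Once \eqref{eqn04} and \eqref{eqn05} are in hand, Lemma~\ref{GDBC} gives $|\limsup_Q S_Q\cap I|\ge\kappa|I|$ for a fixed $\kappa>0$ independent of $I$, and Lemma~\ref{lemmafullmeasure} finishes the proof of Claim~\ref{claim2} and hence of Theorem~\ref{theorem2}.
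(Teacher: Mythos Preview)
Your overall architecture is correct: establish $|S'_Q\cap I|\asymp\Psi^m(Q)|I|$ and a quasi-independence bound, feed these into Lemma~\ref{GDBC}, then apply Lemma~\ref{lemmafullmeasure}. However, the substance of the argument has several gaps.

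\textbf{The bad-set estimate.} Your proposed choice $\theta(Q)=Q^{-n/m}$ (so $\Theta(Q)\asymp1$) does \emph{not} make \eqref{BQbound} favorable. Both $\Psi(Q)$ and $\Delta(Q)$ tend to zero (polylogarithmically), so with $\Theta(Q)\asymp1$ the right-hand side of \eqref{BQbound} is $\Delta^{-m}\Psi^{1/(m-1)-m}$, which blows up. The paper therefore takes $\Theta(Q)=\eta(Q)\Psi^{1-m(m-1)}(Q)\Delta^{-m(m-1)}(Q)$ with $\eta(Q)\to\infty$, precisely calibrated so that $|B_Q|\lesssim\eta^{-1/(m-1)}(Q)=o(1)$. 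This is a genuine polylogarithmic growth of $\Theta$, not a ``mild perturbation'' of a constant. Relatedly, to invoke Lemma~\ref{lemmaRprime1} one needs a Diophantine hypothesis on $\bfalpha^T$, not on $\bfalpha$; the paper supplies this via a separate transference lemma (using Khintchine's transference principle together with the Hardy-function regularity \eqref{Philip}), which your sketch omits.

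\textbf{The quasi-independence estimate.} This is where the real difficulty lies, and your sketch does not confront it. Lemma~\ref{lemma1} only bounds $|\TS_{Q_2,s}(c)|$ on the \emph{full} fibre $[-1,1]$; it says nothing about the density of $S_{Q_2}$ on a short sub-arc coming from the $S_{Q_1}$ condition. The paper's route is to replace $S_Q$ by a carefully constructed subset $S'_Q$ (built from the linearised sets $\TS_{Q,s}(1/2)$ at a $3\theta(Q)$-separated net of good base points) and then prove structural facts about $S'_Q$: it is a disjoint union of intervals each of length $\asymp\psi(Q)$, and distinct intervals are separated by $\gtrsim\phi(Q)$ (this separation comes from Lemma~\ref{lemmaR} and the non-$\phi$-approximability of $\bfalpha$). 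With that structure in hand, $|S'_{Q_1}\cap S'_{Q_2}\cap I|$ is estimated by a three-regime argument according to whether $\psi(Q_1)$ is below $\phi(Q_2)$, between $\phi(Q_2)$ and $\theta(Q_2)$, or above $\theta(Q_2)$; the middle regime in particular requires the specific relationship $\phi=\psi\cdot\log\log$ and the polylogarithmic size of $\Theta$ to sum acceptably. None of this interval structure, nor the trichotomy, appears in your outline, and without it the ``expected density on each arc'' assertion has no justification.
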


We begin  by  proving   the following  auxiliary statement.

\begin{lemma}
Let $d = \pdim + \qdim$, $\beta > (d-1)m/n$, $\phi$ be as in Claim~\ref{claim2} and \[
\delta(Q) := Q^{-m/n} \Phi^\beta(Q)\,.
\]
Suppose that  $\bfalpha$ is not $\phi$-approximable. Then $\bfalpha^T$ is not $\delta$-approximable.
\end{lemma}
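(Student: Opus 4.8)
The plan is to argue by contradiction via Khintchine's transference principle, exploiting the fact that $\phi$-approximability of $\bfalpha$ and $\delta$-approximability of $\bfalpha^T$ are linked by a standard transference inequality. First I would recall the precise form of transference: if $\bfalpha^T$ is $\delta$-approximable, meaning there are infinitely many $(\pp,\qq)\in\Z^m\times\Z^n$ with $\|\qq-\bfalpha^T\pp\|\le\delta(\|\pp\|)$, then (by the Khintchine–Jarník type transference inequality, see e.g. the discussion in the introduction around \eqref{NVWA}) one obtains infinitely many solutions $(\pp',\qq')$ to an inequality of the shape $\|\bfalpha\qq'+\pp'\|\le\eta(\|\qq'\|)$ for a related function $\eta$. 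The content of the lemma is that the quantitative loss incurred in this passage is absorbed by the gap between $\beta$ and $(d-1)m/n$, so that $\eta\le\phi$ eventually, contradicting the hypothesis that $\bfalpha$ is not $\phi$-approximable.

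Concretely, I would carry out the following steps. \textbf{Step 1.} Assume for contradiction that $\bfalpha^T$ is $\delta$-approximable with $\delta(Q)=Q^{-m/n}\Phi^\beta(Q)$. Recall $\Phi(Q)=Q^{n/m}\phi(Q)$, so that $\delta(Q)=Q^{-m/n}\cdot Q^{\beta n/m}\phi^\beta(Q)$; since $\phi(Q)\asymp (Q^n\log Q\cdots)^{-1/m}\log\log Q$ is, up to slowly varying factors, of size $Q^{-n/m}$, the function $\delta$ behaves like $Q^{-m/n - n/m + \beta n/m}$ times a slowly varying factor — in particular $\delta(Q)=Q^{-\sigma+o(1)}$ for an explicit $\sigma$. \textbf{Step 2.} Feed this into the transference inequality. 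The classical inequality (Khintchine, Jarník; see also Cassels' book) gives: if $\|\qq-\bfalpha^T\pp\|<\delta$ has infinitely many solutions then $\|\bfalpha\qq'+\pp'\|<\eta$ has infinitely many solutions, where the exponent of $\eta$ is governed by $\delta$ and the dimensions $m,n,d$; quantitatively the hypothesis $\beta>(d-1)m/n$ is exactly what is needed to make the resulting exponent of $\eta$ at least as large as the exponent of $\phi$. \textbf{Step 3.} Track the slowly varying (logarithmic) factors carefully to conclude $\eta(q)\le\phi(q)$ for all sufficiently large $q$; since $\phi$ and all the functions in play are Hardy $L$-functions (or products of powers of iterated logarithms), comparison is routine and the strict inequality $\beta>(d-1)m/n$ gives room to spare for the logarithmic corrections. \textbf{Step 4.} Conclude that $\bfalpha$ is $\phi$-approximable, contradicting the hypothesis.

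I expect the main obstacle to be \textbf{Step 2 and the bookkeeping in Step 3}: getting the transference inequality in exactly the right quantitative form and then verifying that the power of $\phi$ (namely $\beta$) together with the condition $\beta>(d-1)m/n$ precisely compensates for the exponent loss $\tfrac{d-1}{?}$ in transference, while simultaneously controlling the iterated-logarithm factors hidden inside $\phi$ and $\Phi$. A clean way to handle this is to first prove the statement with $\phi$ replaced by a pure power $\psi_\tau$ (so that transference is literally the inequality relating the irrationality exponents of $\bfalpha$ and $\bfalpha^T$), extract the sharp threshold on $\beta$, and then observe that the inclusion of the slowly varying factors only improves the inequality since we have the strict inequality $\beta>(d-1)m/n$ with a positive margin. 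Alternatively, one can invoke directly a transference statement phrased for general approximation functions (as in the inhomogeneous literature cited, e.g. \cite{BugeaudLaurent05}) so that no reduction is needed. Either way, once the transference step is in place, the contradiction is immediate.
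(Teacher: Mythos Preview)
Your proposal is correct and follows essentially the same route as the paper: argue by contradiction, apply Khintchine's transference principle (Cassels, Theorem~V.II) to pass from $\delta$-approximability of $\bfalpha^T$ to $\gamma$-approximability of $\bfalpha$ for an explicitly defined function $\gamma$, and then compare $\gamma$ with $\phi$ to reach a contradiction. The one point the paper makes more precise than your Step~3 is that the change-of-argument issue (the transferred inequality involves $\Phi$ evaluated at $q^{m/n}\Phi^{\beta(1-m)/(d-1)}(q)$ rather than at $q$) is handled not by the slack in $\beta>(d-1)m/n$ but by a separate estimate $\Phi(q)\asymp\Phi(q')$ whenever $\log q\asymp\log q'$, obtained from Hardy $L$-function theory; the strict inequality on $\beta$ is used only at the final step, where $\Phi(q)\lesssim\Phi^{\beta n/(m(d-1))}(q)$ together with $\Phi(q)\to 0$ and $\beta n/(m(d-1))>1$ yields the contradiction.
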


\begin{proof}
Observe that $\Phi(q) \geq \Psi(q) \geq \log^{-2}(q)$ and therefore $g(x) := -\log\Phi(\exp\exp(x)) \leq 2x$. Consequently, since $g$ is a Hardy $L$-function, by \cite[Theorem on p.50]{Hardy}, we have that $0\leq g'(x) \lesssim \frac\del{\del x}(2x) = 2$. It follows that  $g$ is eventually $C$-Lipschitz for some constant $C$, i.e. $|g(y)-g(x)| \leq C|y - x|$ for all sufficiently large $x,y$. Letting $x = \log\log(q)$ and $y = \log\log(q')$, exponentiating both sides, and applying the definition of $g$ yields that
\begin{equation}
\label{Philip}
\Phi(q) \asymp \Phi(q') \quad \text{if}   \quad  \log(q) \asymp \log(q')
\end{equation}
(with the implied constant on the left-hand asymptotic depending on the implied constant on the right-hand asymptotic). Now,  aiming for a  contradiction, suppose that $\bfalpha^T$ is $\delta$-approximable. Then by Khintchine's transference principle \cite[Theorem V.II]{Cassels}, $\bfalpha$ is $\gamma$-approximable, where $\gamma$ is the unique function such that
\[
\gamma\left((d-1)q^{m/(d-1)}\delta^{(1-m)/(d-1)}(q)\right) = (d-1) q^{(1-n)/(d-1)} \delta^{n/(d-1)}(q) \text{ for all } q.
\]
Since $\bfalpha$ is not $\phi$-approximable, it follows that $\gamma(Q) > \phi(Q)$ for arbitrarily large $Q$. Hence for arbitrarily large $q$ we have
that
\begin{equation}
\label{khinchintransference}
\phi\left((d-1)q^{m/(d-1)}\delta^{(1-m)/(d-1)}(q)\right) < (d-1) q^{(1-n)/(d-1)} \delta^{n/(d-1)}(q).
\end{equation}
Now by the definition of $\delta$,
\begin{align*}
q^{m/(d-1)}\delta^{(1-m)/(d-1)}(q)
&= q^{m/n} \Phi^{\beta (1-m)/(d-1)}(q),\\
q^{(1-n)/(d-1)} \delta^{n/(d-1)}(q)
&= q^{-1} \Phi^{\beta n/(d-1)}(q)
\end{align*}
and on the other hand, $\phi(Q) = Q^{-n/m} \Phi(Q)$. Thus, \eqref{khinchintransference} becomes
\[
\big(q^{m/n} \Phi^{\beta (1-m)/(d-1)}(q)\big)^{-n/m} \Phi\big(q^{m/n} \Phi^{\beta (1-m)/(d-1)}(q)\big) \lesssim q^{-1} \Phi^{\beta n/(d-1)}(q),
\]
and rearranging gives
\[
\Phi\big(q^{m/n} \Phi^{\beta (1-m)/(d-1)}(q)\big) \lesssim \Phi^{\beta n/m(d-1)}(q).
\]
Applying \eqref{Philip} results in
\[
\Phi(q) \lesssim \Phi^{\beta n/m(d-1)}(q)\,,
\]
and since $\Phi(q) \to 0$, this gives a contradiction for $\beta > m(d-1)/n$.
\end{proof}

\bigskip

\begin{proof}[Proof of Claim~\ref{claim2}]
Since
$\delta^{-1}(1/Q) \asymp Q^{n/m} \Phi^{\beta n/m}(Q)$,
we have that
\begin{equation}
\label{Deltabound}
\Delta(Q) \asymp \Phi^{\beta n/m}(Q),
\end{equation}
where $\Delta$ is as in \eqref{Deltadef}. In particular, $\Delta(Q) \leq 1$ for all sufficiently large $Q$.

Let
\begin{align}
\label{thetadef1}
\eta(Q) &:= 1/\Psi(Q),\\
\label{thetadef2}\Theta(Q) &:= \eta(Q) \Psi^{1-\pdim(\pdim-1)}(Q) \Delta^{-\pdim(\pdim-1)}(Q)\,,\\
\label{thetadef3}\theta(Q) &:= Q^{-\qdim/\pdim}\Theta(Q).
\end{align}
(We will later in another context apply \eqref{thetadef2} and \eqref{thetadef3} with a different value of $\eta$.) Since $\Psi(Q)\to 0$ we have that $\eta(Q)\to \infty$. Since $\Delta(Q) \leq 1$ for all sufficiently large $Q$, we have $\Theta(Q) \geq 1$ for all sufficiently large $Q$. Note that
\[
\frac{\theta(Q)}{\psi^{1/2}(Q)} = \frac{Q^{-\qdim/\pdim}}{Q^{-\qdim/2\pdim}} \frac{\eta(Q) \Psi^{1-\pdim(\pdim-1)}(Q) \Delta^{-\pdim(\pdim-1)}(Q)}{\Psi^{1/2}(Q)} = o(1).
\]
Thus, by Lemmas \ref{lemmaRprime1} and \ref{lemmasummary}, for all but finitely many $Q\in\QQ$ we have that
\begin{equation}
\label{BQo1}
|B_Q| \lesssim \frac{1}{\Delta^\pdim(Q)} \left(\frac{\Psi(Q)}{\Theta(Q)}\right)^{1/(\pdim-1)}
\frac1{\Psi^\pdim(Q)} = \eta^{-1/(\pdim-1)}(Q) = o(1).
\end{equation}
Write $I_0 = [a,b]$, and let $A_Q$ be a maximal $3\theta(Q)$-separated subset of $[a+\theta(Q),b-\theta(Q)]\butnot B_Q$. For each $\tbase \in A_Q$, let
\[
E(\tbase) := \tbase + \theta(Q) \TS_{Q,\tbase}(1/2),
\]
where $\TS_{Q,\tbase}(1/2)$ is defined by \eqref{Qapprox2}. By Lemma~\ref{lemmaSQs}, we have that $E(\tbase) \subset S_{\psi,Q}=S_Q$ for all but finitely many $Q\in\QQ$. Next, let
\begin{align}
\label{vb4.18}\SQorig &:= \bigcup_{\tbase\in A_Q} E(\tbase) \subset S_{\psi,Q},&
\SQnew &:= \NN(\SQorig,2C_2\psi(Q)) \subset S_{(1+2C_2^2) \psi,Q}\,,
\end{align}
where $\NN(S,\epsilon)$  denotes an $\epsilon$ neighbourhood of a set $S$, and $C_2 \geq 1$ is the bi-Lipschitz constant of $\ff$. We claim the following:

\begin{subclaim}\label{subclaim1}
For any interval $I\subset I_0$ and any $k\in\N$ there exist $Q^-\ge k$ such that
\begin{equation}
\label{SQprimelowerbound}
\frac{|\SQnew\cap I|}{|I|} \asymp \Psi^\pdim(Q) \text{ for all } \QQ\ni Q \geq Q^-
\end{equation}
and for infinitely many $Q^+\in\QQ$ with $Q^+>Q^-$ we have that
\begin{equation}
\label{quasiindep}
\sum_{Q_1,Q_2 \in \QQ'} \frac{|S'_{Q_1} \cap S'_{Q_2}\cap I|}{|I|} \lesssim \sum_{Q\in\QQ'} \Psi^\pdim(Q) + \left(\sum_{Q\in\QQ'} \Psi^\pdim(Q)\right)^2,
\end{equation}
where $\QQ' := \QQ\cap [Q^-,Q^+]$.
\end{subclaim}

While postponing the proof of Subclaim~\ref{subclaim1} until the next subsection, we now finish the proof of Claim~\ref{claim2}. By Subclaim~\ref{subclaim1}, \eqref{Psidiverge}, and the fact that $\Psi(Q)\le1$ for all sufficiently large $Q$, choose as we may a sequence $\QQ_k:=\QQ\cap[Q^-,Q^+]\subset [k,\infty)$ such that
$$
\sum_{Q\in \QQ_k}\Psi(Q)^m\asymp 1
$$
and  \eqref{SQprimelowerbound} and \eqref{quasiindep} hold. Then, inequalities \eqref{SQprimelowerbound} and \eqref{quasiindep}  verify \eqref{eqn04} and \eqref{eqn05}, making Lemma~\ref{GDBC} applicable. Then, in view of \eqref{vb03.3}, by Lemma~\ref{GDBC}, we have that
\begin{equation}\label{vb4.17}
|\ff^{-1}(\TTwist_{C\psi}(\bfalpha))\cap I| \gtrsim |I|\,.
\end{equation}
Using Lemma~\ref{lemmafullmeasure} completes the proof of Claim~\ref{claim2}.
\end{proof}

\medskip

\subsection{Proof of Subclaim~\ref{subclaim1}}

To begin with we write
$$
\SQnew = \bigcup_{J\in\II_Q} J\,,
$$
where $\II_Q$ is a collection of disjoint intervals. Clearly, by \eqref{vb4.18}, we have that $|J| \geq 4C_2\psi(Q)$ for all $J\in \II_Q$. We will prove the following two subclaims regarding the size and separation of the intervals in $\II_Q$. In what follows, $d(J_1,J_2)$ stands for the (infimal) distance between intervals $J_1$ and $J_2$.

\begin{subclaim}[Separation]
\label{subclaimseparation}
For distinct $J_1,J_2 \in \II_Q$, we have that $\dist(J_1,J_2) \gtrsim \phi(Q)$.
\end{subclaim}
\begin{subclaim}[Size]
\label{subclaimsize}
$4 C_2\psi(Q) \leq |J| \leq 6C_2\psi(Q)$ for all $J\in \II_Q$.
\end{subclaim}

\begin{proof}[Proof of Subclaims~\ref{subclaimseparation} and \ref{subclaimsize}]
Fix $J_1,J_2\in \II_Q$ (not necessarily distinct) and for each $i = 1,2$ fix a point $t_i \in \SQorig \cap J_i \subset S_{\psi,Q} \cap J_i$, where $S''_Q$ is as in \eqref{vb4.18}. Then $\ff(t_i) \in A_{\psi,Q}$ and thus there exists $\rr_i\in \Lambda_Q$ such that
\[
(Q^{\qdim/\pdim}\ff(t_i),\0) - \rr_i \in \RR(\Psi(Q),1).
\]
If $\rr_1 = \rr_2$, then we have that
\[
(Q^{\qdim/\pdim}\ff(t_2) - Q^{\qdim/\pdim}\ff(t_1),\0) \in \RR(2\Psi(Q),2)
\]
and thus
$\|\ff(t_2) - \ff(t_1)\| \leq 2\psi(Q)$.
Since $C_2$ is the bi-Lipschitz constant of $\ff$, it follows that
\begin{equation}
\label{CpsiQ}
|t_2 - t_1| \leq 2C_2 \, \psi(Q),
\end{equation}
and thus $[t_1,t_2] \subset \SQnew$ (or $[t_2,t_1] \subset \SQnew$). Consequently, $J_1 = J_2$. So $\rr_1 = \rr_2$ implies $J_1 = J_2$.

On the other hand, suppose that $\rr_1 \neq \rr_2$. Since
\[
\rr_2 - \rr_1 \in \RR\left(2\Psi(Q) + C_2 Q^{\qdim/\pdim}|t_2 - t_1|,2\right),
\]
by Lemma~\ref{lemmaR} we have that
\[
2^{-n/m} \Phi(2Q) \leq 2\Psi(Q) + C_2 Q^{\qdim/\pdim}|t_2 - t_1|\,.
\]
Since $\Psi(Q) \leq (1/4) 2^{-n/m} \Phi(2Q)$ for all $Q$ sufficiently large (cf. Remark \ref{remarkij}), we have that
\[
\phi(2Q) = O(|t_2 - t_1|)
\]
and thus $|t_2 - t_1| \geq c\phi(Q)$ for some constant $c > 0$. Since $t_1,t_2$ were arbitrary points in $\SQorig\cap J_1,\SQorig\cap J_2$  respectively, it follows that
\begin{itemize}
\item[(A)] If $J_1 \neq J_2$, then $\dist(J_1,J_2) \geq c\phi(Q) - 4C_2\psi(Q) \geq c\phi(Q)/2$ for all $Q$ sufficiently large (cf. Remark \ref{remarkij}). This completes the proof of Subclaim \ref{subclaimseparation}.   \smallskip
\item[(B)] If $J_1 = J_2$, then the above calculation shows that the case $\rr_1 \neq \rr_2$ is impossible as it would result in a gap in $J_1$ of size $\geq c\phi(Q)/2 > 0$ contradicting that $J_1$ is an interval. It follows that for all $t_i \in \SQorig\cap J_i$ we have that  \eqref{CpsiQ} holds, and thus  $|J_i| \leq 6C_2\psi(Q)$. On the other hand, it is clear from the definitions of $\SQnew$ and $J_i$ that $|J_i| \geq 4 C_2 \psi(Q)$; this completes the proof of Subclaim \ref{subclaimsize}.
\qedhere\end{itemize}
\end{proof}

The next two subclaims verify condition \eqref{SQprimelowerbound}.

\begin{subclaim}[Lower bound]
\label{lowerbound}
For any interval $I\subset I_0$ and sufficiently large $Q\in\QQ$ we have that $|\SQnew\cap I|\gg \Psi^m(Q)\cdot|I|$.
\end{subclaim}

\begin{proof}[Proof of Subclaim~\ref{lowerbound}]
Given $I \subset I_0$, and  $Q\in\QQ$, let $J_Q = \{t\in I : B(t,\theta(Q)) \subset I\}$ and $J_Q' = \{t\in I : B(t,4\theta(Q)) \subset I\}$. Then for all but finitely many $Q\in\QQ$ (depending on $I$),
\begin{align*}
|\SQnew\cap I|
~&\geq~ |\SQorig\cap I|~\geq \sum_{\tbase\in A_Q\cap J_Q} |\tbase + \theta(Q) \TS_{Q,\tbase}(1/2)|\\[0ex]
&\asymp~ \#(A_Q\cap J_Q) \theta(Q) \Psi^\pdim(Q) \hspace*{12.5ex}\text{(by Lemmas \ref{lemma1} and \ref{lemmaR})}\\[1ex]
&\gtrsim~ |J_Q'\butnot B_Q| \cdot \Psi^\pdim(Q) \hspace*{5ex}\text{(since $J_Q' \butnot B_Q \subset \NN(A_Q\cap J_Q,3\theta(Q))$)}\\[1ex]
&\geq~ (|I| - |B_Q| - 8\theta(Q)) \cdot \Psi^\pdim(Q)\\[1ex]
&\geq~ \Psi^\pdim(Q) \cdot |I|/2. \hspace*{37ex}\text{(by \eqref{BQo1})}
\end{align*}
\end{proof}

\begin{subclaim}[Upper bound]
\label{subclaimdistribution}
On any interval $I \subset I_0$, we have that
\[
|\SQnew\cap I| \lesssim \Psi^\pdim(Q)\cdot (|I|+\theta(Q)).
\]
In particular, if $|I| \gtrsim \theta(Q)$ then  $|\SQnew\cap I| \lesssim \Psi^\pdim(Q)\cdot |I|$.
\end{subclaim}

\noindent Note that by taking $Q_-$ sufficiently large, we get the upper bound in \eqref{SQprimelowerbound}.

\begin{proof}[Proof of Subclaim~\ref{subclaimdistribution}]
Suppose $Q$ is large enough so that $\Theta(Q) \geq 2 C_2^2$ and $\Psi(Q) \leq 1$, and note that this implies $\theta(Q) \geq 2 C_2 \psi(Q)$. Then
\begin{align*}
|\SQnew&\cap I|
\leq \sum_{\tbase\in A_Q\cap \NN(I,\theta(Q))} |\NN(E(\tbase),2C_2\psi(Q))|\\
&= \theta(Q) \sum_{\tbase\in A_Q\cap \NN(I,\theta(Q))} |\NN(\TS_{Q,\tbase}(1/2),2C_2 \psi(Q)/\theta(Q))| \noreason\\
&\leq \theta(Q) \sum_{\tbase\in A_Q\cap \NN(I,\theta(Q))} |\TS_{Q,\tbase}(3/2)|\hspace*{5ex}\text{(since $2C_2\psi(Q)/\theta(Q) \leq C_2^{-1} \Psi(Q)$)}\\
&\asymp \theta(Q) \sum_{\tbase\in A_Q\cap \NN(I,\theta(Q))} \Psi^\pdim(Q) \hspace*{18.5ex}\text{(by Lemmas \ref{lemma1} and \ref{lemmaR})}\\[2ex]
&\leq \Psi^\pdim(Q) \cdot |\NN(I,2\theta(Q))|\hspace*{18ex}\text{(since $A_Q$ is $3\theta(Q)$-separated)}\\[2ex]
&\asymp \Psi^\pdim(Q) \cdot (|I|+\theta(Q)).
&&\qedhere\end{align*}
\end{proof}

It remains to prove the following ``independence'' statment.

\begin{subclaim}[Quasi-independence estimate]\label{QIE}
Given any interval $I\subset I_0$, for all sufficiently $Q^-$ and any $Q^+$, sufficiently large in terms of $Q^-$,  estimate \eqref{quasiindep} holds.
\end{subclaim}

\begin{proof}[Proof of Subclaim~\ref{QIE}]
Fix $I \subset I_0$ and $Q_1,Q_2\in\QQ$ such that $Q_1 \leq Q_2$, both large in terms of $I$. Then, we trivially have that
\begin{align}
\nonumber\frac{|S'_{Q_1}\cap S'_{Q_2} \cap I|}{|I|} & \lesssim \frac{|S'_{Q_1} \cap \NN(I,6C_2 \psi(Q_1))|}{|I|}\cdot \max_{J\in \II_{Q_1}} \frac{|S'_{Q_2}\cap J|}{|J|}\\ &\asymp \Psi^\pdim(Q_1) \cdot \max_{J\in \II_{Q_1}} \frac{|S'_{Q_2}\cap J|}{|J|}\,.\label{SQ1SQ2}
\end{align}
By Subclaim \ref{subclaimsize}, we have $|J| \asymp \psi(Q_1)$ for all $J\in \II_{Q_1}$. Now fix $J\in \II_{Q_1}$, and we will estimate $|S'_{Q_2}\cap J|$. The calculation  proceeds differently depending on the following three ``ranges'':
\begin{itemize}
\item[1.] If $\psi(Q_1) \leq \phi(Q_2)$ (i.e. $Q_1,Q_2$ are close to each other), then by Subclaim~\ref{subclaimseparation} (with $Q = Q_2$) and Subclaim~\ref{subclaimsize} (with $Q = Q_1$), we have that $J$ intersects only boundedly many elements of $\II_{Q_2}$, and thus by Subclaim \ref{subclaimsize} (with $Q = Q_2$) we otain that $|S'_{Q_2}\cap J| \lesssim \psi(Q_2)$. Applying \eqref{SQ1SQ2} and Subclaim \ref{subclaimsize} (with $Q = Q_1$) yields
\[
\frac{|S'_{Q_1}\cap S'_{Q_2}\cap I|}{|I|} \lesssim \Psi^\pdim(Q_1) \frac{\psi(Q_2)}{\psi(Q_1)} = \Psi^{\pdim-1}(Q_1) \Psi(Q_2) \left(\frac{Q_1}{Q_2}\right)^{\qdim/\pdim}.
\]
Since $\Psi$ is a Hardy $L$-function such that $\Psi(Q)\to 0$ as $Q\to\infty$, $\Psi$ is eventually decreasing and thus $\Psi(Q_2) \leq \Psi(Q_1)$, so
\[
\frac{|S'_{Q_1}\cap S'_{Q_2}\cap I|}{|I|} \lesssim \Psi^{\pdim}(Q_1) \left(\frac{Q_1}{Q_2}\right)^{\qdim/\pdim}.
\]
\item[2.] If $\phi(Q_2) < \psi(Q_1) \leq \theta(Q_2)$ (i.e. $Q_1,Q_2$ are at medium distance to each other), then by Subclaim \ref{subclaimseparation} (with $Q = Q_2$), we have that $J$ intersects $\lesssim \psi(Q_1)/\phi(Q_2)$ elements of $\II_{Q_2}$, and thus by Subclaim \ref{subclaimsize} (with $Q = Q_2$) we have $|S'_{Q_2}\cap J| \lesssim \psi(Q_1)\psi(Q_2)/\phi(Q_2)$. Applying \eqref{SQ1SQ2} and \eqref{phidef} yields
\[
\frac{|S'_{Q_1}\cap S'_{Q_2}\cap I|}{|I|} \lesssim \Psi^\pdim(Q_1) \frac{\psi(Q_2)}{\phi(Q_2)} \asymp \Psi^\pdim(Q_1) \frac{1}{|\log\Psi(Q_2)|} \cdot
\]
\item[3.] If $\psi(Q_1) > \theta(Q_2)$ (i.e. $Q_1,Q_2$ are far from each other), then by Subclaims~\ref{subclaimsize} and  \ref{subclaimdistribution}, we have that $|S'_{Q_2}\cap J| \lesssim \Psi^\pdim(Q_2)\cdot |J|$ for all $J \in \II_{Q_1}$. Applying \eqref{SQ1SQ2} yields
\[
\frac{|S'_{Q_1}\cap S'_{Q_2}\cap I|}{|I|} \lesssim \Psi^\pdim(Q_1) \Psi^\pdim(Q_2).
\]
\end{itemize}
We now proceed to estimate the sum
\begin{equation}\label{vb4.21}
\sum_{\substack{Q_1 \leq Q_2 \\ Q_1,Q_2 \in \QQ'}} \frac{|S'_{Q_1} \cap S'_{Q_2}\cap I|}{|I|}
\end{equation}
where $\QQ' = \QQ\cap [Q^-,Q^+]$ for some large $Q^-,Q^+ \in \QQ$. Each term in this sum can be considered to belong to exactly one of the three cases above. Thus, we can write \eqref{vb4.21} as the sum of three smaller series $\Sigma_1,\Sigma_2,\Sigma_3$ corresponding to the terms from each of the three cases:
\[
\Sigma_k = \sum_{\substack{Q_1 \leq Q_2 \\ Q_1,Q_2 \in \QQ' \\ \text{Case $k$}}} \frac{|S'_{Q_1} \cap S'_{Q_2}\cap I|}{|I|}    \ \ \ \  \qquad (k=1,2,3)  \, ,
\]
Clearly,
\begin{equation}
\label{sigma3}
\Sigma_3 \lesssim \left(\sum_{Q\in \QQ'} \Psi^\pdim(Q) \right)^2
\end{equation}
and
\begin{equation}
\label{sigma1}
\Sigma_1 \lesssim \sum_{\substack{Q_1 \leq Q_2 \\ Q_1,Q_2 \in \QQ'}} \Psi^\pdim(Q_1) \left(\frac{Q_1}{Q_2}\right)^{\qdim/\pdim}
\asymp \sum_{Q\in \QQ'} \Psi^\pdim(Q).
\end{equation}
To bound $\Sigma_2$, fix $Q_1\in \QQ$, and let $Q_2^-,Q_2^+\in \QQ$ be the smallest and largest elements of $\QQ$ such that the pairs $(Q_1,Q_2^\pm)$ fall into Case 2, i.e. such that $\phi(Q_2^\pm) < \psi(Q_1) \leq \theta(Q_2^\pm)$. Then $\phi(Q_2^-) < \theta(Q_2^+)$, or equivalently
\[
\left(\frac{Q_2^+}{Q_2^-}\right)^{\qdim/\pdim} < \frac{\Theta(Q_2^+)}{\Phi(Q_2^-)}\cdot
\]
Fix $0 < \epsilon < \qdim/\pdim$. Since $\Theta$ is a Hardy $L$-function such that $1 \leq \Theta(Q) \leq Q^{\epsilon/2}$ for all sufficiently large $Q$, if $Q^-$ is sufficiently large, then we have that
\[
\Theta(\Lambda Q) \leq \Lambda^\epsilon \Theta(Q) \;\;\all \Lambda \geq 1,\; Q \geq Q^-   \, .
\]
Hence, since $\Phi$ is decreasing, we have that
\[
\frac{\Theta(Q_2^+)}{\Phi(Q_2^-)} \leq \left(\frac{Q_2^+}{Q_2^-}\right)^\epsilon \min_{Q\in \QQ'(Q_1)}\frac{\Theta(Q)}{\Phi(Q)}
\]
and thus
\[
\left(\frac{Q_2^+}{Q_2^-}\right)^{n/m-\epsilon} < \min_{Q\in \QQ'(Q_1)}\frac{\Theta(Q)}{\Phi(Q)}
\]
where $\QQ'(Q_1) = \QQ\cap [Q_2^-,Q_2^+]$. It follows that
\[
\#(\QQ'(Q_1)) = 1 + \log_2\left(\frac{Q_2^+}{Q_2^-}\right) \lesssim 1 + \min_{Q_2\in \QQ'(Q_1)}\log\left(\frac{\Theta(Q_2)}{\Phi(Q_2)}\right)
\]
and thus
\begin{align*}
\Sigma_2 &\lesssim \sum_{Q_1 \in \QQ'} \Psi^\pdim(Q_1) \sum_{Q_2 \in \QQ'(Q_1)} \frac{1}{|\log\Psi(Q_2)|} \\[1ex] &\lesssim \sum_{Q_1 \in \QQ'} \Psi^\pdim(Q_1) \max_{Q_2\in \QQ'(Q_1)}\frac{\log\left(\frac{\Theta(Q_2)}{\Phi(Q_2)}\right)}{|\log\Psi(Q_2)|}\cdot
\end{align*}
It follows from \eqref{Deltabound}, \eqref{thetadef2}, and \eqref{phidef} that
\[
\log\left(\frac{\Theta(Q)}{\Phi(Q)}\right) \lesssim |\log\Psi(Q)|
\]
and thus
\begin{equation}
\label{sigma2}
\Sigma_2 \lesssim \sum_{Q\in \QQ'} \Psi^\pdim(Q).
\end{equation}
Combining \eqref{sigma3}, \eqref{sigma1}, and \eqref{sigma2} finishes the proof of \eqref{quasiindep}.
\end{proof}

\medskip

\subsection{Proof of Theorem~\ref{theorem3}}

In view of Remarks \ref{remark1} and \ref{remark2}, proving the following claim will complete the proof of Theorem~\ref{theorem3}.

\begin{claim}
\label{claim3}
Suppose that $\bfalpha$ is nonsingular and that $\psi = c\psi_\noverm$ for some $0 < c \leq 1$. Then $|\ff^{-1}(\Twist_\psi(\bfalpha))| = |I_0|$.
\end{claim}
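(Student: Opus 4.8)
The plan is to deduce Claim~\ref{claim3} from the estimate $|\ff^{-1}(\TTwist_\psi(\bfalpha))| = |I_0|$: since $\psi = c\psi_\noverm$ is doubling, Lemma~\ref{lemmadani} gives $\TTwist_\psi(\bfalpha)\butnot\mathcal{Z}_\bfalpha \subset \Twist_\psi(\bfalpha)$, and $\ff^{-1}(\mathcal{Z}_\bfalpha)$ is null because $\mathcal{Z}_\bfalpha$ is countable and $\ff$ is an immersion, so this estimate yields $|\ff^{-1}(\Twist_\psi(\bfalpha))| = |I_0|$. First I would reduce to the case that $c$ is as small as we like: since $c'\psi_\noverm \leq c\psi_\noverm$ pointwise whenever $c' \leq c$, we have $\Twist_{c'\psi_\noverm}(\bfalpha) \subset \Twist_{c\psi_\noverm}(\bfalpha)$, so it suffices to prove the claim for all $c$ small (in terms of $\ff$, $\pdim$, $\qdim$, $C_1$ and the nonsingularity constant $\epsilon'$ of $\bfalpha$). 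Note that with $\psi = c\psi_\noverm$ one has $\Psi \equiv c$, hence $\Psi^\pdim(Q) = c^\pdim$ is constant and $\sum_{Q\in\QQ}\Psi^\pdim(Q) = \infty$: we are in the divergence regime.

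The key step will be a lemma parallel to Lemma~\ref{lemmanonsingular}: if $c$ is small enough then there exist $\epsilon > 0$ and an \emph{infinite} set $\QQ^* \subset \QQ$ such that for every $Q \in \QQ^*$ \emph{both} \eqref{assump1} (with $\Delta(Q) \equiv \epsilon$) and \eqref{notpsiapprox} hold. The point is that both are consequences of the single statement that the system $\|\qq\| \leq 2C_1 Q$, $\|\bfalpha\qq + \pp\| \leq \epsilon'(4C_1 Q)^{-\noverm}$ has no nontrivial integer solution, which by the very definition of nonsingularity holds for arbitrarily large $Q \in \QQ$: the implication to \eqref{assump1} is exactly the argument of Lemma~\ref{lemmanonsingular}, while the implication to \eqref{notpsiapprox}, namely $\Lambda_Q \cap 2R_{3/2} = \Lambda_Q \cap \RR(3c,2) = \{\0\}$, needs only $3c \leq \epsilon'(4C_1)^{-\noverm}$, which we arrange by shrinking $c$.

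Having fixed such $\epsilon$ and $\QQ^*$, I would take $\theta(Q) := Q^{-\noverm}\Theta(Q)$ with $\Theta(Q) := \log Q$, so that $Q^{-\noverm} \leq \theta(Q) = o(\psi^{1/2}(Q))$ and $\Theta(Q)\to\infty$. By Lemmas~\ref{lemmanonsingular} and \ref{lemmasummary}, for $Q\in\QQ^*$ large one gets $|B_Q| \lesssim \epsilon^{-\pdim}c^{-\pdim}(c/\log Q)^{1/(\pdim-1)} \to 0$; and by Lemma~\ref{lemma1}, now with both \eqref{LRdual} (valid since $\tbase \notin B_Q$) and \eqref{notpsiapprox} available, $|\TS_{Q,\tbase}(c')| \asymp c^\pdim$ for $c' \in \{1/2,3/2\}$ and every good $\tbase$. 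Inserting these bounds into the Fubini identity \eqref{gammaperturbation} localised to an arbitrary interval $I \subset I_0$ --- splitting the $\tbase$-integral according to whether $\tbase \in B_Q$ (fibre measure $\leq 2$) or not (fibre measure $\asymp c^\pdim$, via Lemma~\ref{lemmaSQs}), and using $|B_Q|, \theta(Q) \to 0$ --- gives $|S_Q \cap I| \asymp c^\pdim |I|$ for every $Q \in \QQ^*$ exceeding a threshold depending only on $|I|$.

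Finally I would feed $\{S_Q\}_{Q\in\QQ^*}$ into the quasi-independence Borel--Cantelli Lemma~\ref{GDBC} with $I$ fixed: for each large $k$ let $\QQ_k$ be the first $N \asymp c^{-\pdim}$ elements of $\QQ^* \cap [k,\infty)$ lying above the threshold for $I$, so that the left side of \eqref{eqn04} is bounded below by an absolute constant; and verify \eqref{eqn05} via the crude bound $|S_{Q_1}\cap S_{Q_2}\cap I| \leq |S_{Q_1}\cap I| \lesssim c^\pdim|I|$, which makes the double sum $\lesssim N^2 c^\pdim \asymp c^{-\pdim}$ against $\big(\sum_{Q\in\QQ_k}|S_Q\cap I|/|I|\big)^2 \asymp 1$, so the constant $C$ in \eqref{eqn05} may be taken absolute. (This quasi-independence, which is the main difficulty in Theorem~\ref{theorem2}, is essentially trivial here precisely because $\Psi$ is constant, so only boundedly many scales ever interact.) Lemma~\ref{GDBC} then gives $|\limsup_{Q\in\QQ^*} S_Q \cap I| \geq c_\ast|I|$ for an absolute $c_\ast > 0$ and every sufficiently small $I$; since $\limsup_{Q\in\QQ^*} S_Q \subset \ff^{-1}(\TTwist_\psi(\bfalpha))$, Lemma~\ref{lemmafullmeasure} upgrades this to full measure in $I_0$, completing the proof. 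I expect the main obstacle to be the lemma of the second paragraph --- forcing the primal condition \eqref{notpsiapprox} and the dual condition \eqref{assump1} to hold \emph{at the same scales} along an infinite sequence. This is exactly where nonsingularity is used: it provides infinitely many scales $Q$ at which Dirichlet's theorem is not improvable, so $\Lambda_Q$ has no unusually short primal vector and hence (by transference) no unusually short dual vector either, and the freedom to shrink $c$ is what lets a single ``no short vector'' statement discharge both conditions.
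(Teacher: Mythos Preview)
Your proof is correct in substance but takes a longer path than necessary. The paper's own argument avoids Lemma~\ref{GDBC} entirely: once you have $|S_Q \cap I| \gtrsim c^\pdim |I|$ for infinitely many $Q \in \QQ$, reverse Fatou (applied to the indicator functions $\chi_{S_Q \cap I}$) immediately yields
\[
|\limsup_{Q\in\QQ} S_Q \cap I| \;\geq\; \limsup_{Q\to\infty} |S_Q \cap I| \;\gtrsim\; c^\pdim |I|,
\]
and then Lemma~\ref{lemmafullmeasure} finishes. The quasi-independence machinery, which is genuinely needed in Theorem~\ref{theorem2} because there $\Psi(Q) \to 0$, is redundant here precisely because $\Psi \equiv c$ is bounded away from zero, so a one-scale lower bound already controls the limsup. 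Your own parenthetical remark (``only boundedly many scales ever interact'') is really a hint that no interaction estimate is needed at all.

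One minor inaccuracy: in your GDBC step the constant $C$ in \eqref{eqn05} is not absolute; your own estimate gives $C \asymp c^{-\pdim}$. This is harmless, since $c$ is fixed independently of $I$ and the conclusion $|\limsup S_Q \cap I| \gtrsim c_\ast(c)\,|I|$ still feeds correctly into Lemma~\ref{lemmafullmeasure}, but the claim as stated is wrong. On the positive side, your explicit treatment of condition \eqref{notpsiapprox}---shrinking $c$ so that the single nonsingularity statement at scale $Q$ simultaneously yields the dual condition \eqref{assump1} and the primal condition \eqref{notpsiapprox}---is more transparent than the paper, which simply refers to ``repeating the proof of \eqref{SQprimelowerbound}'' without indicating how \eqref{notpsiapprox} is secured in this setting.
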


\begin{proof}
Let $\Delta\equiv\epsilon \leq 1$ be as in Lemma~\ref{lemmanonsingular}, and let $\eta(Q) = Q^\epsilon$ for some small $0 < \epsilon < n/2m$. As in the proof of Claim \ref{claim2} we let $\Theta$ and $\theta$ be defined by \eqref{thetadef2} and \eqref{thetadef3}, and we note that $\eta(Q) \to \infty$ as before, and that $Q^{-\qdim/\pdim} \leq \theta(Q) = o(\psi^{1/2}(Q))$ for all sufficiently large $Q$. Thus by Lemmas \ref{lemmanonsingular} and \ref{lemmasummary}, for infinitely many $Q\in\QQ$, \eqref{BQbound} holds; that is
$$|B_Q| \lesssim \eta^{-1/(m-1)}(Q) = o(1) \, . $$ Fix $I \subset I_0$. Repeating the proof of \eqref{SQprimelowerbound} shows that $|S_Q \cap I| \gtrsim \Psi^\pdim(Q) \cdot |I|$ for any such $Q$. But since $\Psi \equiv c \asymp 1$, this shows that $|S_Q \cap I| \gtrsim |I|$ for any such $Q$; taking the limsup with respect to $Q$ gives $|\ff^{-1}(\TTwist_\psi(\bfalpha)) \cap I| \gtrsim |I|$. Since $I$ was arbitrary, Lemma~\ref{lemmafullmeasure} implies that $\ff^{-1}(\TTwist_\psi(\bfalpha))$ has full measure in $I_0$.
\end{proof}

\section{Proof of Theorem~\ref{theorem5}}

We start by establishing the ``Dani correspondence'' for very singular matrices -- in general the correspondence  connects problems in Diophantine approximation to the behaviour of flows on homogeneous spaces.

\begin{lemma}[Dani correspondence principle]
\label{lemmadani2}
Let $\bfalpha$ be a very singular $m\times n$ matrix. Then there exists $\delta > 0$ such that for all $Q$ sufficiently large,
\[
\lambda_1(\Lambda_Q^*) \leq Q^{-\delta}.
\]
\end{lemma}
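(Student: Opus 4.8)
The plan is to convert the very singularity of $\bfalpha$ into a polynomially fast decay of the first Minkowski minimum $\lambda_1(\Lambda_Q)$, and then transfer this to the polar lattice $\Lambda_Q^*$ by the geometry of numbers. Throughout one keeps in mind that $\Lambda_Q = g_Q u_\bfalpha \Z^{\pdim+\qdim}$ is unimodular, since $\det g_Q = \det u_\bfalpha = 1$; write $d = \pdim + \qdim$. The only step requiring genuine thought is the first one, which is where the hypothesis is used and where the polynomial rate is produced; the rest is a black-box application of classical results.

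\textbf{Step 1 (the crux): a polynomially short vector in $\Lambda_Q$.} The idea is the standard trick of re-using, at a parameter $Q$, a solution of the very-singular system produced at a strictly smaller parameter $Q_0$. Fix a small $\rho \in (0,\, \epsilon\pdim/\qdim)$, where $\epsilon$ is the exponent from \eqref{verysingulardef}, and for large $Q$ set $Q_0 := Q^{1/(1+\rho)}$. Applying the very singularity hypothesis \eqref{verysingulardef} at the parameter $Q_0$ (legitimate once $Q$, hence $Q_0$, is large) produces $(\pp,\qq) \in \Z^\pdim \times (\Z^\qdim \butnot \{\0\})$ with $\|\bfalpha\qq + \pp\| \le Q_0^{-(\qdim/\pdim + \epsilon)}$ and $\|\qq\| \le Q_0$, so that the lattice vector
\[
v := g_Q u_\bfalpha(\pp,\qq) = \big(Q^{\qdim/\pdim}(\bfalpha\qq + \pp),\; Q^{-1}\qq\big) \in \Lambda_Q \butnot \{\0\}
\]
is nonzero (because $\qq \ne \0$). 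Substituting $Q = Q_0^{1+\rho}$, the first block of $v$ has norm $\le Q_0^{\rho\qdim/\pdim - \epsilon}$ and the second has norm $\le Q_0^{-\rho}$; both exponents are negative by the choice of $\rho$, so $\lambda_1(\Lambda_Q) \le |v| \le Q^{-\delta_1}$ for some explicit $\delta_1 > 0$ and all sufficiently large $Q$. This is exactly where the defect $\epsilon$ in the definition of ``very singular'' is converted into a strictly positive decay rate; the point needing care is keeping track of the parameter range $Q_0 = Q^{1/(1+\rho)}$ and checking the two exponents.

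\textbf{Step 2: transfer to $\Lambda_Q^*$.} Since $\Lambda_Q$ is unimodular, Minkowski's second theorem gives $\lambda_1(\Lambda_Q) \cdots \lambda_d(\Lambda_Q) \asymp 1$, with implied constants depending only on $d$; together with the monotonicity $\lambda_1 \le \cdots \le \lambda_d$ this forces $\lambda_d(\Lambda_Q)^{d-1} \ge \lambda_2(\Lambda_Q)\cdots\lambda_d(\Lambda_Q) \asymp \lambda_1(\Lambda_Q)^{-1} \ge Q^{\delta_1}$, i.e. $\lambda_d(\Lambda_Q) \gtrsim Q^{\delta_1/(d-1)}$. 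Now the duality principle for Minkowski minima \cite[Theorem~VIII.5.VI]{Cassels3}, applied to the (self-polar) Euclidean ball, yields $\lambda_1(\Lambda_Q^*)\,\lambda_d(\Lambda_Q) \asymp 1$, hence $\lambda_1(\Lambda_Q^*) \lesssim Q^{-\delta_1/(d-1)}$; choosing any $\delta$ with $0 < \delta < \delta_1/(d-1)$ absorbs the implied constant for large $Q$ and gives $\lambda_1(\Lambda_Q^*) \le Q^{-\delta}$. Alternatively one can bypass Minkowski's second theorem: taking $v$ above to be a \emph{shortest}, hence primitive, vector of $\Lambda_Q$, the lattice $\Lambda_Q^* \cap v^\perp$ has rank $d-1$ and covolume $|v|$ in the hyperplane $v^\perp$, so Minkowski's first theorem in dimension $d-1$ produces a nonzero vector of $\Lambda_Q^*$ of length $\lesssim |v|^{1/(d-1)} \le Q^{-\delta_1/(d-1)}$, which is the same conclusion.
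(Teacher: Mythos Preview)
Your proof is correct and follows essentially the same route as the paper's: both produce a short vector in $\Lambda_Q$ by applying the very-singular condition at a polynomially smaller parameter (you set $Q_0 = Q^{1/(1+\rho)}$, the paper equivalently sets $\tilde Q = Q^{1+\delta}$ and works with $\Lambda_{\tilde Q}$), and then both pass to $\Lambda_Q^*$ via Minkowski's second theorem and the duality principle for successive minima. Your alternative in Step~2, locating a short dual vector directly in $\Lambda_Q^* \cap v^\perp$ via Minkowski's first theorem in dimension $d-1$, is a pleasant variant but not needed.
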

\begin{proof}
Let $\epsilon > 0$ and $Q_\epsilon \geq 1$ be as in \eqref{verysingulardef}. Fix $Q \geq Q_\epsilon$, and choose $(\pp,\qq)\in \Z^m\times \Z^n$ such that \eqref{verysingulardef} holds. Let $\w Q = Q^{1+\delta}$, where $\delta>0$ is small. Let $\rr = g_{\w Q} u_\bfalpha (\pp,\qq) \in \Lambda_{\w Q}$. Then
\begin{align*}
\|\rr\| &\leq  \max(\w Q^{n/m} \|\bfalpha \cdot \qq + \pp\| , \w Q^{-1} \|\qq\|)\\[2ex]
&\leq \max(Q^{(1+\delta)n/m} Q^{-(n/m+\epsilon)}, Q^{-(1+\delta)} Q)\\[2ex]
&= \max(Q^{\delta n/m - \epsilon},Q^{-\delta}).
\end{align*}
By choosing $\delta$ small enough we can guarantee that $\delta n/m - \epsilon \leq -\delta$ and thus $\|\rr\| \lesssim Q^{-\delta}$. Thus $\lambda_1(\Lambda_Q) \lesssim Q^{-\delta}$, and by Minkowski's second theorem we have $\lambda_d(\Lambda_Q) \gtrsim Q^{\delta/(d-1)}$. By the duality principle for Minkowski minima, it follows that $\lambda_1(\Lambda_Q^*) \lesssim Q^{-\delta/(d-1)}$.
\end{proof}

Now fix $m\geq 2$, $n\geq 1$, an $m\times n$ matrix $\bfalpha$, a curve $\CC \subset \R^m$, and a $C^m$ parameterization $\ff:I_0\to\CC$ with Wronskian bounded from below. We want to show that $\ff^{-1}(\Tad(\bfalpha))$ has full measure in $I_0$. Note that $\Tad(\bfalpha) \supset \R^m\butnot \Twist_{\epsilon\psi_\noverm}(\bfalpha)$ for all $\epsilon > 0$, where $\psi(q) := \psi_\noverm(q) := q^{-n/m}$. Thus by Lemma~\ref{lemmadani}, we have
\[
\Tad(\bfalpha)  \  \supset  \  \R^m\butnot \TTwist_{\psi_\noverm}(\bfalpha)\  =  \ \R^m\butnot \limsup_{\QQ\ni Q\to\infty} A_{\psi_\noverm,Q}.
\]
For convenience let $A_Q = A_{\psi_\noverm,Q}$. By the Borel--Cantelli lemma, if
\begin{equation}
\label{ETS5}
\sum_{Q\in\QQ} |\ff^{-1}(A_Q)| < \infty,
\end{equation}
then $\ff^{-1}(\Tad(\bfalpha))$ has full measure in $I_0$, and we are done. To demonstrate \eqref{ETS5}, fix $Q\in\QQ$. By Lemma~\ref{lemmadani2}, if $Q$ is large enough then there exists $\rr \in\Lambda_Q^*\butnot\{\0\}$ such that $\|\rr\| \leq Q^{-\delta}$. Write
\[
\rr = ((g_Q u_\bfalpha)^T)^{-1}(\pp,\qq) = (Q^{-n/m} \pp, Q (\qq - \bfalpha^T \cdot\pp)).
\]
If $\pp = \0$, then $\|\rr\| = Q\|\qq\| \geq Q$, which contradicts $\|\rr\| \leq Q^{-\delta}$. So $\|\pp\| \geq 1$. Now if $\bfbeta\in A_Q$, then there exist $\ss\in \Lambda_Q$ and $\tt\in \RR(\Psi(Q),1) = \RR(1,1)$ such that $(Q^{\qdim/\pdim}\bfbeta,\0) = \ss + \tt$, and thus
\[
\pp\cdot \bfbeta = \rr\cdot (\ss + \tt) \in \Z + B(0,2\|\rr\|) \subset \NN(\Z,2 Q^{-\delta}).
\]
If $t\in \ff^{-1}(A_Q)$, then
\[
g(t) \df \pp\cdot \ff(t) \in \NN(\Z,2 Q^{-\delta}).
\]
In other words,
\[
\ff^{-1}(A_Q) \subset \bigcup_{k\in\Z} g^{-1}\big(B(k,2 Q^{-\delta})\big).
\]
By Lemma~\ref{lemmanondegenerate}, the measure of any single term in the union on the right-hand side is
\[
|g^{-1}\big(B(k,2 Q^{-\delta})\big)|
\lesssim (Q^{-\delta}/\|\pp\|)^{1/(m-1)} \leq Q^{-\delta/(m-1)}.
\]
Now let $I \subset I_0$ be an interval of monotonicity for $g'$. By replacing $g$ by $t \mapsto \pm g(\pm t)$, we may without loss of generality suppose that $g'$ is positive and increasing on $I$. Let $k_0\in\Z$ be the smallest integer such that $g(I)\cap B(k_0,2Q^{-\delta})\neq \emptyset$. Then $(g^{-1})'$ is positive and decreasing on $g(I)$, so \medskip
\begin{align*}
|I\cap &\ \ff^{-1}(A_Q)| \leq  \ \big|I\cap g^{-1}\big(\NN(\Z,2Q^{-\delta})\big)\big|= \\[1ex]
&\hspace*{4ex}\text{(change of variables)}\\
&=  \ |I\cap g^{-1}(B(k_0,2Q^{-\delta}))| \ + \ \sum_{k>k_0} \int_{g(I) \cap B(k,2Q^{-\delta})}  (g^{-1})' \\
&\hspace*{4ex}\text{(since $(g^{-1})'$ is decreasing)}\\
&\leq  \ |I\cap g^{-1}(B(k_0,2Q^{-\delta}))| \ +  \ \sum_{k > k_0} 4Q^{-\delta}  \int_{g(I) \cap [k-1+Q^{-\delta},k+Q^{-\delta}]} (g^{-1})'  \\
&\leq \  |I\cap g^{-1}(B(k_0,2Q^{-\delta}))| \ +  \ 4Q^{-\delta}  \int_{g(I)} (g^{-1})' \\
&\hspace*{4ex}\text{(by Lemma~\ref{lemmanondegenerate})}\\
&\lesssim \  Q^{-\delta/(m-1)}  \ +  \ Q^{-\delta}|I| \ \lesssim  \ Q^{-\delta/(m-1)}.
\end{align*}
Since $\ff$ is nondegenerate, $I_0$ can be covered by a collection of intervals on which $g'$ is monotonic and whose cardinality is bounded independent of $\pp$. Thus
\[
|\ff^{-1}(A_Q) | \lesssim Q^{-\delta/(m-1)}.
\]
This proves \eqref{ETS5}.

\appendix
\section{Proof of Theorem~\ref{theorem4}}
\label{sectionappendix}
We define the absolute game as introduced by McMullen in \cite{McMullen_absolute_winning}.

Let $\Lambda\subset \R^d$ be a closed set. For each $0 < \beta < 1$, the \emph{absolute $\beta$-game on $\Lambda$} is an infinite game played by two players, Alice and Bob, who take turns choosing balls $B_1,A_1,B_2,A_2,\ldots$ in $\R^d$ with centers in $\Lambda$, with Bob moving first. The players must choose their moves so as to satisfy the relations
\begin{equation}
\label{absolute}
B_{k + 1} \subset B_k \setminus A_k
\end{equation}
and
\[
\rho(A_k) \leq \beta \rho(B_k) \text{ and } \rho(B_{k + 1}) \geq \beta \rho(B_k)   \ \text{ for } k\in\N,
\]
where $\rho(B)$ denotes the radius of a ball $B$. Due to condition \eqref{absolute} we think of Alice as ``deleting'' her chosen ball $A_k$, whereas Bob is thought of as ``moving into'' his choice $B_k$. The completeness of $\Lambda$ implies that the intersection $\bigcap_k B_k$ is a singleton, say $\bigcap_k B_k = \{\xx^{(\infty)}\}$, and the point $\xx^{(\infty)}\in \Lambda$ is called the \emph{outcome} of the game. A set $S \subseteq \Lambda$ is said to be \emph{absolute $\beta$-winning on $\Lambda$} if Alice has a strategy guaranteeing that the outcome lies in $S$, regardless of the way Bob chooses to play. It is said to be \emph{absolute winning on $\Lambda$} if it is absolute $\beta$-winning for every $0 < \beta < 1$. A fundamental property of absolute winning sets is the following:

\begin{lemma}[{\cite[Lemmas 3 and 4]{BGSV}}]
\label{lemmaBFKRW}
Let $S$ be an absolute winning set on a closed set $\Lambda\subset \R^d$, and let $K \subset \Lambda$ be a closed Ahlfors $\delta$-regular set. Then
\[
\HD(K\cap S) = \HD(K) = \delta.
\]
\end{lemma}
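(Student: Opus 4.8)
Since $K$ is Ahlfors $\delta$-regular it has $\HD(K)=\delta$, and by monotonicity of Hausdorff dimension $\HD(K\cap S)\le\delta$; so the plan is to prove the reverse inequality $\HD(K\cap S)\ge\delta$. I would do this in two steps, mirroring the two lemmas cited: first I would show that $S\cap K$ is absolute winning \emph{on $K$} (a restriction step), and then that any absolute winning subset of a closed Ahlfors $\delta$-regular set has Hausdorff dimension $\delta$.

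\emph{Step 1 (restriction to $K$).} Fix $\beta'\in(0,1)$ and set $\beta=\beta'/2$. Since $S$ is absolute winning on $\Lambda$, Alice has a winning strategy in the absolute $\beta$-game on $\Lambda$ for the target $S$. She will play the absolute $\beta'$-game on $K$ for the target $S\cap K$ by simulating that $\Lambda$-game in parallel. Whenever Bob plays a ball $B_k$ in the $K$-game, its centre lies in $K\subseteq\Lambda$, so $B_k$ is a legal Bob move in the $\Lambda$-game; feeding it to the $\Lambda$-strategy returns a deletion ball $A_k$ with $\rho(A_k)\le\beta\rho(B_k)$. If $A_k\cap K=\emptyset$ Alice deletes an arbitrary tiny ball centred in $K$; otherwise she picks $y_k\in A_k\cap K$ and deletes $A_k'=B(y_k,2\rho(A_k))$, which contains $A_k$ and has radius $\le 2\beta\rho(B_k)=\beta'\rho(B_k)$, hence is a legal move in the $K$-game. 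Bob's reply $B_{k+1}\subset B_k\setminus A_k'\subseteq B_k\setminus A_k$ then has $\rho(B_{k+1})\ge\beta'\rho(B_k)\ge\beta\rho(B_k)$ and centre in $\Lambda$, so it is legal in the $\Lambda$-game as well; feed it back and iterate. The two games have the same outcome $\bigcap_k B_k$, which lies in $S$ because Alice followed her winning $\Lambda$-strategy, and lies in $K$ because $K$ is closed and the $B_k$ are centred in $K$ with radii tending to $0$. Hence $S\cap K$ is absolute $\beta'$-winning on $K$, and since $\beta'$ was arbitrary, absolute winning on $K$.

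\emph{Step 2 (dimension of an absolute winning set in an Ahlfors regular space).} Write $S'=S\cap K$; it suffices to show $\HD(S')\ge\delta$. Fix a small $\beta\in(0,1)$ and a winning strategy for Alice in the absolute $\beta$-game on $K$ for the target $S'$. I would construct a homogeneous Cantor subset $\mathcal C\subset S'$ as follows. Start from a ball $B^{(0)}=B(x_0,r_0)$ with $x_0\in K$ and $r_0$ below the diameter of $K$. Inductively, given a level-$k$ ball $B$ arising in a partial play (centre in $K$, radius $\beta^k r_0$), Alice's strategy prescribes a deletion ball $A$ with $\rho(A)\le\beta^{k+1}r_0$. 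By Ahlfors $\delta$-regularity, a maximal $2\beta^{k+1}r_0$-separated subset of $K\cap\tfrac12 B$ has $\asymp\beta^{-\delta}$ points; around each such point the ball of radius $\beta^{k+1}r_0$ is contained in $B$ (for $\beta\le\tfrac12$), these balls are pairwise disjoint and centred in $K$, and since $\rho(A)\le\beta^{k+1}r_0$ at most $O_d(1)$ of them meet $A$. Discarding those, we are left with $M\ge c_0\beta^{-\delta}$ balls (for $\beta$ small; $c_0$ depends only on $d$ and the Ahlfors constants), each a legal Bob reply $B'\subset B\setminus A$ with radius $\beta^{k+1}r_0\ge\beta\rho(B)$ and centre in $K$; these are the level-$(k+1)$ balls. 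Every branch of the resulting tree is a legal play of the absolute $\beta$-game on $K$ against Alice's winning strategy, so its outcome lies in $S'$, and the outcome also lies in the nested intersection of the $B$'s along the branch, which is a point of $K$. Hence $\mathcal C$, the intersection over $k$ of the unions of level-$k$ balls, satisfies $\mathcal C\subset S'$. Since the tree has uniform branching $\ge M$, contraction ratio $\beta$, and the children of each ball are $\ge\beta^{k+1}r_0$-separated, the standard mass-distribution estimate for such Cantor sets gives $\HD(\mathcal C)\ge\log M/\log(1/\beta)\ge\delta-C/\log(1/\beta)$. Letting $\beta\to0$ yields $\HD(S')\ge\delta$, which completes the proof.

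\emph{Main obstacle.} Step 1 is routine bookkeeping. The substantive point is the geometric count in Step 2: one must guarantee, uniformly at every scale, that after Alice's deletion there remain $\gtrsim\beta^{-\delta}$ pairwise disjoint sub-balls that are simultaneously centred in $K$ and legal Bob moves inside $B\setminus A$. This needs the packing lower bound and the bounded-overlap upper bound coming from Ahlfors regularity to be applied with compatible constants, together with a little care at the boundary of $B$ so the chosen sub-balls genuinely lie in $B\setminus A$. Once this is in place, the dimension lower bound is the classical computation for homogeneous Cantor sets.
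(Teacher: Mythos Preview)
The paper does not prove this lemma: it is quoted as \cite[Lemmas~3 and~4]{BGSV} and used as a black box. So there is no in-paper argument to compare against; your proposal is to be judged on its own merits and against the standard proof one would find in the cited source.

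Your two-step outline is exactly the standard route and is essentially correct. Step~1 (restriction of absolute winning from $\Lambda$ to a closed subset $K$) is the content of one of the cited lemmas, and your simulation argument with $\beta=\beta'/2$ and the recentred deletion $A_k'=B(y_k,2\rho(A_k))\supset A_k$ is the usual device. Step~2 (a Cantor-tree construction inside $K$ using Ahlfors regularity to produce $\gtrsim\beta^{-\delta}$ disjoint legal children per node, of which Alice's single deletion kills only $O_d(1)$) is precisely the argument behind the second cited lemma, and your identification of the ``main obstacle'' --- the uniform packing count at every scale --- is accurate.

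Two small points worth tightening. First, in Step~1 you should note why the simulated $\Lambda$-game is genuinely an instance of the absolute $\beta$-game on $\Lambda$ at every stage: you checked $\rho(B_{k+1})\ge\beta'\rho(B_k)\ge\beta\rho(B_k)$, but you should also remark that if Bob cannot legally move in the $K$-game (which can happen for $\beta'$ close to $1$ on a thin $K$), the convention is that Alice wins by default, so there is nothing to prove in that case. Second, in Step~2 the children you produce have centres in $\tfrac12 B$ and radius $\beta\rho(B)$, so for $\beta\le\tfrac12$ they are contained in $B$; to get $B'\subset B\setminus A$ you need $B'\cap A=\emptyset$, which is exactly what discarding the $O_d(1)$ balls meeting $A$ achieves --- but you should also check $B'$ is centred in $K$ (it is, by construction) and has the required radius lower bound $\rho(B')=\beta\rho(B)$ (equality, so fine). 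With these cosmetic additions the argument is complete and matches the literature.
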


Since manifolds are Ahlfors regular, it follows that to prove Theorem~\ref{theorem4}, it suffices to show that the set of badly $\bfalpha$-approximable vectors is absolute winning on $\R^m$. To prove this, we need the following lemma:

\begin{lemma}
Fix $0 < \epsilon < 1$, and let $\bfalpha$ be an $m\times n$ matrix such that
\begin{equation}
\label{epsilondef}
\|\bfalpha\cdot \qq + \pp\| > \epsilon \|\qq\|^{-n/m}
\end{equation}
for all $(\pp,\qq)\in \Z^m\times(\Z^n\butnot\{\0\})$. Then for all $Q \geq 1$ and $\bfbeta\in\R^m$, there is at most one pair $(\pp,\qq) \in \Z^m\times \Z^n$ such that
\[
\|\bfalpha\cdot \qq + \pp + \bfbeta\| \leq \tfrac12 \epsilon Q^{-n/m}  \quad \text{ and }    \quad \|\qq\| \leq \tfrac12 Q.
\]
\end{lemma}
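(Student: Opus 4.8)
The plan is a completely standard ``gap'' argument: if there were two distinct admissible pairs, their difference would produce a nonzero integer point contradicting the Diophantine hypothesis \eqref{epsilondef}. First I would argue by contradiction, assuming there are two distinct pairs $(\pp_1,\qq_1),(\pp_2,\qq_2)\in\Z^m\times\Z^n$ with $\|\bfalpha\cdot\qq_i+\pp_i+\bfbeta\|\le\tfrac12\epsilon Q^{-n/m}$ and $\|\qq_i\|\le\tfrac12 Q$ for $i=1,2$. Subtracting the two affine forms cancels the common vector $\bfbeta$, and the triangle inequality yields
\[
\|\bfalpha\cdot(\qq_1-\qq_2)+(\pp_1-\pp_2)\|\le\epsilon Q^{-n/m},
\qquad \|\qq_1-\qq_2\|\le Q.
\]

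Next I would split into two cases depending on whether $\qq_1=\qq_2$. If $\qq_1\ne\qq_2$, put $\qq:=\qq_1-\qq_2\in\Z^n\butnot\{\0\}$ and $\pp:=\pp_1-\pp_2\in\Z^m$. Since $t\mapsto t^{-n/m}$ is decreasing and $1\le\|\qq\|\le Q$, we have $Q^{-n/m}\le\|\qq\|^{-n/m}$, so $\|\bfalpha\cdot\qq+\pp\|\le\epsilon Q^{-n/m}\le\epsilon\|\qq\|^{-n/m}$, directly contradicting \eqref{epsilondef}. Hence $\qq_1=\qq_2$. In that case the displayed inequality reduces to $\|\pp_1-\pp_2\|\le\epsilon Q^{-n/m}\le\epsilon<1$, using $Q\ge1$ and $\epsilon<1$; but $\pp_1-\pp_2$ is an integer vector of norm strictly less than $1$, so $\pp_1=\pp_2$, contradicting the distinctness of the two pairs. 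This completes the argument.

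There is essentially no genuine obstacle here; the only two points requiring a little care are the direction of the inequality when passing from the cutoff $Q$ to $\|\qq\|$ inside the exponent $-n/m$, and the observation that \eqref{epsilondef} is assumed only for $\qq\ne\0$ — which is precisely why the subcase $\qq_1=\qq_2$ must be handled by the integrality of $\pp_1-\pp_2$ rather than by \eqref{epsilondef}. The factor $\tfrac12$ in both the numerator bound and the $\|\qq\|$ bound is exactly what makes the differenced inequalities come out with the clean constants $\epsilon Q^{-n/m}$ and $Q$.
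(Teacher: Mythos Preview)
Your argument is correct and essentially identical to the paper's own proof: both subtract the two admissible pairs to cancel $\bfbeta$, derive $\|\bfalpha\cdot\qq+\pp\|\le\epsilon Q^{-n/m}\le\epsilon\|\qq\|^{-n/m}$ with $\|\qq\|\le Q$, use \eqref{epsilondef} to force $\qq=\0$, and then use $\epsilon<1$ together with integrality to conclude $\pp=\0$. The only cosmetic difference is that you phrase the $\qq=\0$ versus $\qq\ne\0$ split as an explicit case distinction, whereas the paper folds it into a single line.
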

\begin{proof}
Suppose that there were two such pairs, $(\pp_1,\qq_1)$ and $(\pp_2,\qq_2)$. Let $\pp = \pp_2 - \pp_1$ and $\qq = \qq_2 - \qq_1$. Then
\[
\|\bfalpha\cdot \qq + \pp\| \leq \|\bfalpha\cdot \qq_1 + \pp_1 + \bfbeta\| + \|\bfalpha\cdot \qq_2 + \pp_2 + \bfbeta\| \leq \epsilon Q^{-n/m} 
\]
and
\[
\|\qq\| \leq \|\qq_1\| + \|\qq_2\| \leq Q,
\]
from which it follows that
$\|\bfalpha\cdot \qq + \pp\| \leq \epsilon \|\qq\|^{-n/m}$.
By our assumption on $\bfalpha$, it follows that $\qq = \0$. But then
\[
\|\pp\| = \|\bfalpha\cdot \qq + \pp\| \leq \epsilon < 1
\]
and thus $\pp = \0$, so $(\pp_1,\qq_1) = (\pp_2,\qq_2)$.
\end{proof}

Now let $\bfalpha$ be a badly approximable $m\times n$ matrix. Then there exists $0 < \epsilon < 1$ such that \eqref{epsilondef} holds for all $(\pp,\qq)\in \Z^m\times(\Z^n\butnot\{\0\})$. Now we give a strategy for Alice to win the absolute $\beta$-game on $\R^m$ as follows: if Bob makes the move
\[
B_k = B(\xx^{(k)},\rho_k),
\]
then let
\[
Q_k = \left(\frac{\epsilon}{4\rho_k}\right)^{m/n},
\]
and let $(\pp^{(k)},\qq^{(k)})\in \Z^m\times\Z^n$ be the unique pair such that
\[
\|\bfalpha\cdot \qq^{(k)} + \pp^{(k)} + \xx^{(k)}\| \leq 2\rho_k = \tfrac12 \epsilon Q_k^{-n/m} \text{ and } \|\qq^{(k)}\| \leq \tfrac12 Q_k
\]
if such a pair exists, and $(\pp^{(k)},\qq^{(k)}) = (\0,\0)$ otherwise. Then we let Alice's next move be $A_k = B(\yy^{(k)},\beta\rho_k)$, where
\[
\yy^{(k)} = -(\bfalpha\cdot\qq^{(k)} + \pp^{(k)}).
\]
To show that the outcome $\xx^{(\infty)}$ is badly $\bfalpha$-approximable, fix $(\pp,\qq)\in\Z^m\times\Z^n$ such that $\|\qq\| \geq \frac12 Q_1$, and let $k$ be chosen so that $\frac12 Q_{k-1} \leq \|\qq\| <  \frac12 Q_k$. If
\[
\|\bfalpha\cdot \qq + \pp + \xx^{(k)}\| \leq 2\rho_k,
\]
then $(\pp,\qq) = (\pp^{(k)},\qq^{(k)})$, and thus $\xx^{(\infty)} \notin B(\yy^{(k)},\beta \rho_k)$. So
\[
\|\bfalpha\cdot \qq + \pp + \xx^{(\infty)}\| = \|\xx^{(\infty)} - \yy^{(k)}\| \geq \beta \rho_k.
\]
On the other hand, if $\|\bfalpha\cdot \qq + \pp + \xx^{(k)}\| > 2\rho_k$, then
\[
\|\bfalpha\cdot \qq + \pp + \xx^{(\infty)}\| \geq \|\bfalpha\cdot \qq + \pp + \xx^{(k)}\| - \|\xx^{(\infty)} - \xx^{(k)}\| \geq 2\rho_k - \rho_k = \rho_k.
\]
So either way we have
\[
\|\bfalpha\cdot \qq + \pp + \xx^{(\infty)}\| \geq \beta \rho_k \geq \beta^2 \rho_{k-1} = \tfrac14 \epsilon \beta^2 Q_{k-1}^{-n/m} \geq  \tfrac14 \epsilon \beta^2 2^{-n/m} \|\qq\|^{-n/m}.
\]
This shows that $\xx^{(\infty)}$ is badly $\bfalpha$-approximable and we are done.

\bigskip
\bigskip

\noindent{\it Acknowledgements.} VB was supported by an EPSRC grant, EP/Y016769/1. DS was supported by a Royal Society University Research Fellowship, URF\textbackslash{}R1\textbackslash{}180649.
SV would like to congratulate Barak Weiss on reaching middle age with such grace, dignity, and modesty: in a world too often dominated by loco and power-hungry figures, I have the deepest respect and admiration for the way you carry yourself — with integrity, thoughtfulness, and, above all, an open mind.

\bibliographystyle{abbrv}

\bibliography{bibliography}

@article {BMS,
    AUTHOR = {Bengoechea, Paloma and Moshchevitin, Nikolay and Stepanova,
              Natalia},
     TITLE = {A note on badly approximable linear forms on manifolds},
   JOURNAL = {Mathematika},
  FJOURNAL = {Mathematika. A Journal of Pure and Applied Mathematics},
    VOLUME = {63},
      YEAR = {2017},
    NUMBER = {2},
     PAGES = {587--601},
      ISSN = {0025-5793},
   MRCLASS = {11J13 (11J83 11K60)},
  MRNUMBER = {3706599},
       DOI = {10.1112/S0025579317000080},
       URL = {https://doi.org/10.1112/S0025579317000080},
}

@article {BHKV,
    AUTHOR = {Bugeaud, Yann and Harrap, Stephen and Kristensen, Simon and
              Velani, Sanju},
     TITLE = {On shrinking targets for {$\mathbb{Z}^m$} actions on tori},
   JOURNAL = {Mathematika},
  FJOURNAL = {Mathematika. A Journal of Pure and Applied Mathematics},
    VOLUME = {56},
      YEAR = {2010},
    NUMBER = {2},
     PAGES = {193--202},
      ISSN = {0025-5793},
   MRCLASS = {11K60 (11J83)},
  MRNUMBER = {2678024},
MRREVIEWER = {Anish Ghosh},
       DOI = {10.1112/S0025579310001130},
       URL = {https://doi.org/10.1112/S0025579310001130},
}

@article {Beresnevich_Groshev,
    AUTHOR = {Victor Beresnevich},
     TITLE = {A {G}roshev type theorem for convergence on manifolds},
   JOURNAL = {Acta Math. Hungar.},
  FJOURNAL = {Acta Mathematica Hungarica},
    VOLUME = {94},
      YEAR = {2002},
    NUMBER = {1-2},
     PAGES = {99--130},
      ISSN = {0236-5294},
   MRCLASS = {11J83},
  MRNUMBER = {1905790},
MRREVIEWER = {Henrietta Dickinson},
       DOI = {10.1023/A:1015662722298},
       URL = {https://doi.org/10.1023/A:1015662722298},
}

@article {BBV,
    AUTHOR = {Badziahin, Dzmitry and Beresnevich, Victor and Velani, Sanju},
     TITLE = {Inhomogeneous theory of dual {D}iophantine approximation on
              manifolds},
   JOURNAL = {Adv. Math.},
  FJOURNAL = {Advances in Mathematics},
    VOLUME = {232},
      YEAR = {2013},
     PAGES = {1--35},
      ISSN = {0001-8708},
   MRCLASS = {11J83 (11J13 11K60)},
  MRNUMBER = {2989975},
MRREVIEWER = {Jimmy Tseng},
       URL = {https://doi.org/10.1016/j.aim.2012.09.022},
}

@article{DFSU_singular,
    AUTHOR = {Das, Tushar and Fishman, Lior and Simmons, David and Urba\'{n}ski,
              Mariusz},
     TITLE = {A variational principle in the parametric geometry of numbers},
   JOURNAL = {Adv. Math.},
  FJOURNAL = {Advances in Mathematics},
    VOLUME = {437},
      YEAR = {2024},
     PAGES = {Paper No. 109435, 130},
      ISSN = {0001-8708},
   MRCLASS = {11K55 (11J13 28A78 28A80 37A15 37A17 37C85 91A44)},
  MRNUMBER = {4671568},
MRREVIEWER = {Felipe Alberto Ram\'{\i}rez},
       DOI = {10.1016/j.aim.2023.109435},
       URL = {https://doi.org/10.1016/j.aim.2023.109435},}

@article {Good,
    AUTHOR = {Good, Irving J.},
     TITLE = {The fractional dimensional theory of continued fractions},
   JOURNAL = {Proc. Cambridge Philos. Soc.},
    VOLUME = {37},
      YEAR = {1941},
     PAGES = {199--228},
   MRCLASS = {27.2X},
  MRNUMBER = {0004878},
MRREVIEWER = {P. Erd{\"o}s},
}

@article {BGSV,
    AUTHOR = {Beresnevich, Victor and Ghosh, Anish and Simmons, David and
              Velani, Sanju},
     TITLE = {Diophantine approximation in {K}leinian groups: singular,
              extremal, and bad limit points},
   JOURNAL = {J. Lond. Math. Soc. (2)},
  FJOURNAL = {Journal of the London Mathematical Society. Second Series},
    VOLUME = {98},
      YEAR = {2018},
    NUMBER = {2},
     PAGES = {306--328},
      ISSN = {0024-6107},
   MRCLASS = {11J83 (11K60 30F40)},
  MRNUMBER = {3873110},
MRREVIEWER = {Dzmitry Badziahin},
       DOI = {10.1112/jlms.12133},
       URL = {https://doi.org/10.1112/jlms.12133},
}

@article {Shapira,
    AUTHOR = {Shapira, Uri},
     TITLE = {Grids with dense values},
   JOURNAL = {Comment. Math. Helv.},
  FJOURNAL = {Commentarii Mathematici Helvetici. A Journal of the Swiss
              Mathematical Society},
    VOLUME = {88},
      YEAR = {2013},
    NUMBER = {2},
     PAGES = {485--506},
      ISSN = {0010-2571},
   MRCLASS = {37A25 (11J83 37B05)},
  MRNUMBER = {3048195},
MRREVIEWER = {David Samuel Simmons},
       DOI = {10.4171/CMH/293},
       URL = {https://doi.org/10.4171/CMH/293},
}

@article {Simmons10,
    AUTHOR = {Simmons, David},
     TITLE = {Some manifolds of {K}hinchin type for convergence},
   JOURNAL = {J. Th\'{e}or. Nombres Bordeaux},
  FJOURNAL = {Journal de Th\'{e}orie des Nombres de Bordeaux},
    VOLUME = {30},
      YEAR = {2018},
    NUMBER = {1},
     PAGES = {175--193},
      ISSN = {1246-7405},
   MRCLASS = {11J83 (11J13 11L07)},
  MRNUMBER = {3809714},
MRREVIEWER = {Simon Kristensen},
       URL = {http://jtnb.cedram.org/item?id=JTNB_2018__30_1_175_0},
}

@article {DRV,
    AUTHOR = {Dodson, M. M. and Rynne, B. P. and Vickers, J. A. G.},
     TITLE = {Khintchine-type theorems on manifolds},
   JOURNAL = {Acta Arith.},
  FJOURNAL = {Polska Akademia Nauk. Instytut Matematyczny. Acta Arithmetica},
    VOLUME = {57},
      YEAR = {1991},
    NUMBER = {2},
     PAGES = {115--130},
      ISSN = {0065-1036},
     CODEN = {AARIA9},
   MRCLASS = {11J83},
  MRNUMBER = {1092764 (92m:11070)},
}

@article {BVVZ,
    AUTHOR = {Beresnevich, Victor and Vaughan, Robert C. and Velani, Sanju
              and Zorin, Evgeniy},
     TITLE = {Diophantine approximation on manifolds and the distribution of
              rational points: contributions to the convergence theory},
   JOURNAL = {Int. Math. Res. Not. IMRN},
  FJOURNAL = {International Mathematics Research Notices. IMRN},
      YEAR = {2017},
volume = {2017},    NUMBER = {10},
     PAGES = {2885--2908},
      ISSN = {1073-7928},
   MRCLASS = {11J83 (11J13 11K60)},
  MRNUMBER = {3658127},
MRREVIEWER = {Simon Kristensen},
       DOI = {10.1093/imrn/rnv389},
       URL = {https://doi.org/10.1093/imrn/rnv389},
}

@article {BVVZ2,
    AUTHOR = {Beresnevich, V. and Vaughan, R. C. and Velani, S. and Zorin,
              E.},
     TITLE = {Diophantine approximation on curves and the distribution of
              rational points: contributions to the divergence theory},
   JOURNAL = {Adv. Math.},
  FJOURNAL = {Advances in Mathematics},
    VOLUME = {388},
      YEAR = {2021},
     PAGES = {Paper No. 107861, 33},
      ISSN = {0001-8708},
   MRCLASS = {11J83 (11J13 11K55 11K60)},
  MRNUMBER = {4287738},
MRREVIEWER = {Faustin Adiceam},
       DOI = {10.1016/j.aim.2021.107861},
       URL = {https://doi.org/10.1016/j.aim.2021.107861},
}

@book{Cassels3,
author = {John W. S. Cassels},
title = {An introduction to the geometry of numbers. {C}orrected reprint of the 1971 edition},
series = {Classics in Mathematics},
publisher = {Springer-Verlag, Berlin},
year = {1997},
}

@article{Beresnevich_Khinchin,
	Author = {Victor Beresnevich},
	Journal = {Ann. of Math. (2)},
	Pages = {no. 1, 187-235},
	Title = {Rational points near manifolds and metric {D}iophantine approximation},
	Volume = {175},
	Year = {2012}}

@article {Beresnevich_BA,
    AUTHOR = {Victor Beresnevich},
     TITLE = {Badly approximable points on manifolds},
   JOURNAL = {Invent. Math.},
  FJOURNAL = {Inventiones Mathematicae},
    VOLUME = {202},
      YEAR = {2015},
    NUMBER = {3},
     PAGES = {1199--1240},
      ISSN = {0020-9910},
   MRCLASS = {11J13 (11J83)},
  MRNUMBER = {3425389},
       DOI = {10.1007/s00222-015-0586-8},
       URL = {http://dx.doi.org/10.1007/s00222-015-0586-8},
}

@article{BDV,
	Author = {Victor Beresnevich and Detta Dickinson and Sanju Velani},
	Journal = {Mem. Amer. Math. Soc.},
	Pages = {no. 846, x+91 pp.},
	Title = {Measure theoretic laws for lim sup sets},
	Volume = {179},
	Year = {2006}}

@article{BKM2,
author = {Vasilii Bernik and Kleinbock, Dmitry and Gregory Margulis},
title = {{K}hintchine-type theorems on manifolds: the convergence case for standard and multiplicative versions},
journal = {Internat. Math. Res. Notices},
volume = {2001},
year = {2001},
number = {9},
pages = {453-486},
}

@article{BBKM,
author = {Victor Beresnevich and Vasilii Bernik and Kleinbock, Dmitry and Gregory Margulis},
title = {Metric {D}iophantine approximation: the {K}hintchine-{G}roshev theorem for nondegenerate manifolds},
journal = {Mosc. Math. J.},
volume = {2},
year = {2002},
number = {2},
pages = {203-225},
}

@book{BernikDodson,
	Author = {Vasilii Bernik and Maurice Dodson},
	Publisher = {Cambridge University Press, Cambridge},
	Series = {Cambridge Tracts in Mathematics},
	Title = {Metric {D}iophantine approximation on manifolds},
	Volume = {137},
	Year = {1999}}

@article{BFS1,
	Author = {Broderick, Ryan and Fishman, Lior and Simmons, David},
	Journal = {J. Number Theory},
	Pages = {no. 7, 2186-2205},
	Title = {Badly approximable systems of affine forms and incompressibility on fractals},
	Volume = {133},
	Year = {2013}}

@book{Cassels,
	Author = {John W. S. Cassels},
	Publisher = {Cambridge University Press, New York},
	Series = {Cambridge Tracts in Mathematics and Mathematical Physics, No. 45},
	Title = {An introduction to {D}iophantine approximation},
	Year = {1957}}

@article {MR3731303,
    AUTHOR = {Beresnevich, Victor and Lee, Lawrence and Vaughan, Robert C.
              and Velani, Sanju},
     TITLE = {Diophantine approximation on manifolds and lower bounds for
              {H}ausdorff dimension},
   JOURNAL = {Mathematika},
  FJOURNAL = {Mathematika. A Journal of Pure and Applied Mathematics},
    VOLUME = {63},
      YEAR = {2017},
    NUMBER = {3},
     PAGES = {762--779},
      ISSN = {0025-5793,2041-7942},
   MRCLASS = {28A78 (11J04 53C23)},
  MRNUMBER = {3731303},
       DOI = {10.1112/S0025579317000171},
       URL = {https://doi.org/10.1112/S0025579317000171},
}

@book{Hardy,
	Author = {G. H. Hardy},
	Publisher = {Hafner Publishing Co., New York},
	Series = {Cambridge Tracts in Mathematics and Mathematical Physics, No. 12},
	Title = {Orders of infinity. {T}he {I}nfinit\"arcalc\"ul of {P}aul du {B}ois-{R}eymond},
	Year = {1971}}

@article{Hardy2,
	Author = {G. H. Hardy},
	Journal = {Proc. London Math. Soc. (2)},
	Pages = {54-90},
	Title = {Properties of Logarithmico-Exponential Functions},
	Volume = {10},
	Year = {1911}}

@article{Kleinbock1,
	Author = {Dmitry Kleinbock},
	Journal = {Geom. Funct. Anal.},
	Pages = {no. 2, 437-466},
	Title = {Extremal subspaces and their submanifolds},
	Volume = {13},
	Year = {2003}}

@article {MR4120305,
    AUTHOR = {Beresnevich, Victor and Ganguly, Arijit and Ghosh, Anish and
              Velani, Sanju},
     TITLE = {Inhomogeneous dual {D}iophantine approximation on affine
              subspaces},
   JOURNAL = {Int. Math. Res. Not. IMRN},
  FJOURNAL = {International Mathematics Research Notices. IMRN},
      YEAR = {2020},
    NUMBER = {12},
     PAGES = {3582--3613},
      ISSN = {1073-7928,1687-0247},
   MRCLASS = {11J20 (11J83 37A17)},
  MRNUMBER = {4120305},
MRREVIEWER = {Thomas\ Ward},
       DOI = {10.1093/imrn/rny124},
       URL = {https://doi.org/10.1093/imrn/rny124},
}

@article {MR4706444,
    AUTHOR = {Huang, Jing-Jing},
     TITLE = {Extremal affine subspaces and {K}hintchine-{J}arn\'ik type
              theorems},
   JOURNAL = {Geom. Funct. Anal.},
  FJOURNAL = {Geometric and Functional Analysis},
    VOLUME = {34},
      YEAR = {2024},
    NUMBER = {1},
     PAGES = {113--163},
      ISSN = {1016-443X,1420-8970},
   MRCLASS = {11J83 (11J13 11P21)},
  MRNUMBER = {4706444},
MRREVIEWER = {Nicolas\ Chevallier},
       DOI = {10.1007/s00039-024-00665-y},
       URL = {https://doi.org/10.1007/s00039-024-00665-y},
}

@article{KleinbockMargulis2,
	Author = {Dmitry Kleinbock and Gregory Margulis},
	Journal = {Ann. of Math. (2)},
	Pages = {no. 1, 339-360},
	Title = {Flows on homogeneous spaces and {D}iophantine approximation on manifolds},
	Volume = {148},
	Year = {1998}}

@article{Kurzweil,
	Author = {Jaroslav Kurzweil},
	Journal = {Czechoslovak Math. J.},
	Pages = {149-178},
	Title = {A contribution to the metric theory of diophantine approximations},
	Volume = {1(76)},
	Year = {1951}}

@article{McMullen_absolute_winning,
	Author = {Curt McMullen},
	Journal = {Geom. Funct. Anal.},
	Pages = {no. 3, 726-740},
	Title = {Winning sets, quasiconformal maps and {D}iophantine approximation},
	Volume = {20},
	Year = {2010}}

@article{Moshchevitin,
	Author = {Nikolay Moshchevitin},
	Journal = {Moscow Math. J},
	Pages = {no. 1, 129-137},
	Title = {A note on badly approximable affine forms and winning sets},
	Volume = {11},
	Year = {2011}}

@article{Moshchevitin5,
    AUTHOR = {Moshchevitin, Nikolay},
     TITLE = {A note on well distributed sequences},
   JOURNAL = {Unif. Distrib. Theory},
  FJOURNAL = {Uniform Distribution Theory},
    VOLUME = {18},
      YEAR = {2023},
    NUMBER = {1},
     PAGES = {141--146},
      ISSN = {1336-913X,2309-5377},
   MRCLASS = {11J83 (11J13 11J20)},
  MRNUMBER = {4653076},
MRREVIEWER = {Faustin\ Adiceam},
}

@article {Kim2023,
    AUTHOR = {Kim, Taehyeong},
     TITLE = {On a {K}urzweil type theorem via ubiquity},
   JOURNAL = {Acta Arith.},
  FJOURNAL = {Acta Arithmetica},
    VOLUME = {213},
      YEAR = {2024},
    NUMBER = {2},
     PAGES = {181--191},
      ISSN = {0065-1036,1730-6264},
   MRCLASS = {11J20 (11J83 28A78)},
  MRNUMBER = {4742656},
MRREVIEWER = {Michel\ Laurent},
       DOI = {10.4064/aa230609-27-1},
       URL = {https://doi.org/10.4064/aa230609-27-1},
}

@article{Simmons8,
	Author = {Simmons, David},
	Date-Modified = {2015-02-27 17:40:14 +0000},
	Howpublished = {\url{http://arxiv.org/abs/1412.5992}},
	Title = {An analogue of a theorem of {K}urzweil},
journal = {Nonlinearity},
volume = {28},
number = {5},
pages = {1401-1408},
	Year = {2015}}

@article {Beresnevich-Yang,
    AUTHOR = {Beresnevich, Victor and Yang, Lei},
     TITLE = {Khintchine's theorem and {D}iophantine approximation on
              manifolds},
   JOURNAL = {Acta Math.},
  FJOURNAL = {Acta Mathematica},
    VOLUME = {231},
      YEAR = {2023},
    NUMBER = {1},
     PAGES = {1--30},
      ISSN = {0001-5962,1871-2509},
   MRCLASS = {11J83 (11K55 11K60 57N35)},
  MRNUMBER = {4652409},
MRREVIEWER = {Hemangi\ Madhusudan\ Shah},
       DOI = {10.4310/acta.2023.v231.n1.a1},
       URL = {https://doi.org/10.4310/acta.2023.v231.n1.a1},
}

@article {BugeaudLaurent05,
    AUTHOR = {Bugeaud, Yann and Laurent, Michel},
     TITLE = {On exponents of homogeneous and inhomogeneous {D}iophantine
              approximation},
   JOURNAL = {Mosc. Math. J.},
  FJOURNAL = {Moscow Mathematical Journal},
    VOLUME = {5},
      YEAR = {2005},
    NUMBER = {4},
     PAGES = {747--766, 972},
      ISSN = {1609-3321},
   MRCLASS = {11J20 (11J13 11J82)},
  MRNUMBER = {2266457},
MRREVIEWER = {Ekatherina A. Karatsuba},
       DOI = {10.17323/1609-4514-2005-5-4-747-766},
       URL = {https://doi.org/10.17323/1609-4514-2005-5-4-747-766},
}

@article {MR3628933,
    AUTHOR = {Harrap, Stephen and Moshchevitin, Nikolay},
     TITLE = {A note on weighted badly approximable linear forms},
   JOURNAL = {Glasg. Math. J.},
  FJOURNAL = {Glasgow Mathematical Journal},
    VOLUME = {59},
      YEAR = {2017},
    NUMBER = {2},
     PAGES = {349--357},
      ISSN = {0017-0895,1469-509X},
   MRCLASS = {11J83 (11J13 11K60)},
  MRNUMBER = {3628933},
MRREVIEWER = {Dzmitry\ Badziahin},
       DOI = {10.1017/S0017089516000203},
       URL = {https://doi.org/10.1017/S0017089516000203},
}

@article {MR3630719,
    AUTHOR = {Bengoechea, Paloma and Moshchevitin, Nikolay},
     TITLE = {Badly approximable points in twisted {D}iophantine
              approximation and {H}ausdorff dimension},
   JOURNAL = {Acta Arith.},
  FJOURNAL = {Acta Arithmetica},
    VOLUME = {177},
      YEAR = {2017},
    NUMBER = {4},
     PAGES = {301--314},
      ISSN = {0065-1036,1730-6264},
   MRCLASS = {11J83 (11J20 11K60)},
  MRNUMBER = {3630719},
MRREVIEWER = {Dzmitry\ Badziahin},
       DOI = {10.4064/aa8234-11-2016},
       URL = {https://doi.org/10.4064/aa8234-11-2016},
}

@article {MR2853045,
    AUTHOR = {Harrap, Stephen},
     TITLE = {Twisted inhomogeneous {D}iophantine approximation and badly
              approximable sets},
   JOURNAL = {Acta Arith.},
  FJOURNAL = {Acta Arithmetica},
    VOLUME = {151},
      YEAR = {2012},
    NUMBER = {1},
     PAGES = {55--82},
      ISSN = {0065-1036,1730-6264},
   MRCLASS = {11J83 (11K60)},
  MRNUMBER = {2853045},
MRREVIEWER = {Yann\ Bugeaud},
       DOI = {10.4064/aa151-1-5},
       URL = {https://doi.org/10.4064/aa151-1-5},
}

@article {MR3717987,
    AUTHOR = {Gorodnik, Alexander and Vishe, Pankaj},
     TITLE = {Diophantine approximation for products of linear
              maps---logarithmic improvements},
   JOURNAL = {Trans. Amer. Math. Soc.},
  FJOURNAL = {Transactions of the American Mathematical Society},
    VOLUME = {370},
      YEAR = {2018},
    NUMBER = {1},
     PAGES = {487--507},
      ISSN = {0002-9947,1088-6850},
   MRCLASS = {11D75 (11J20 11K60 37A17 37A45)},
  MRNUMBER = {3717987},
MRREVIEWER = {Yann\ Bugeaud},
       DOI = {10.1090/tran/6953},
       URL = {https://doi.org/10.1090/tran/6953},
}

@article {MR2753608,
    AUTHOR = {Shapira, Uri},
     TITLE = {A solution to a problem of {C}assels and {D}iophantine
              properties of cubic numbers},
   JOURNAL = {Ann. of Math. (2)},
  FJOURNAL = {Annals of Mathematics. Second Series},
    VOLUME = {173},
      YEAR = {2011},
    NUMBER = {1},
     PAGES = {543--557},
      ISSN = {0003-486X,1939-8980},
   MRCLASS = {11J06 (11H50 11J20)},
  MRNUMBER = {2753608},
MRREVIEWER = {Jimmy\ Tseng},
       DOI = {10.4007/annals.2011.173.1.11},
       URL = {https://doi.org/10.4007/annals.2011.173.1.11},
}

@article {MR2335077,
    AUTHOR = {Kim, Dong Han},
     TITLE = {The shrinking target property of irrational rotations},
   JOURNAL = {Nonlinearity},
  FJOURNAL = {Nonlinearity},
    VOLUME = {20},
      YEAR = {2007},
    NUMBER = {7},
     PAGES = {1637--1643},
      ISSN = {0951-7715,1361-6544},
   MRCLASS = {37E10 (11J83 11K60 37A45)},
  MRNUMBER = {2335077},
MRREVIEWER = {Stefano\ Galatolo},
       DOI = {10.1088/0951-7715/20/7/006},
       URL = {https://doi.org/10.1088/0951-7715/20/7/006},
}

@article {MR3494133,
    AUTHOR = {Fuchs, Michael and Kim, Dong Han},
     TITLE = {On {K}urzweil's 0-1 law in inhomogeneous {D}iophantine
              approximation},
   JOURNAL = {Acta Arith.},
  FJOURNAL = {Acta Arithmetica},
    VOLUME = {173},
      YEAR = {2016},
    NUMBER = {1},
     PAGES = {41--57},
      ISSN = {0065-1036,1730-6264},
   MRCLASS = {11J83 (11K60 37E10)},
  MRNUMBER = {3494133},
MRREVIEWER = {Ela\ I.\ Kavaleuskaya},
       DOI = {10.4064/aa8219-1-2016},
       URL = {https://doi.org/10.4064/aa8219-1-2016},
}

@misc{BeresnevichDatta1,
author = {Victor Beresnevich and Sgreyasi Datta},
title = {Rational points near manifolds and  {K}hintchine theorem},
howpublished = "\url{https://arxiv.org/abs/2505.01227}",
year = {2025},
}

@article {MR1387861,
    AUTHOR = {Beresnevich, V. and Bernik, V.},
     TITLE = {On a metrical theorem of {W}. {S}chmidt},
   JOURNAL = {Acta Arith.},
  FJOURNAL = {Acta Arithmetica},
    VOLUME = {75},
      YEAR = {1996},
    NUMBER = {3},
     PAGES = {219--233},
      ISSN = {0065-1036,1730-6264},
   MRCLASS = {11J83},
  MRNUMBER = {1387861},
MRREVIEWER = {Eugene\ M.\ Matveev},
       DOI = {10.4064/aa-75-3-219-233},
       URL = {https://doi.org/10.4064/aa-75-3-219-233},
}

@incollection {MR3618787,
    AUTHOR = {Beresnevich, Victor and Ram\'irez, Felipe and Velani, Sanju},
     TITLE = {Metric {D}iophantine approximation: aspects of recent work},
 BOOKTITLE = {Dynamics and analytic number theory},
    SERIES = {London Math. Soc. Lecture Note Ser.},
    VOLUME = {437},
     PAGES = {1--95},
 PUBLISHER = {Cambridge Univ. Press, Cambridge},
      YEAR = {2016},
      ISBN = {978-1-107-55237-1},
   MRCLASS = {11Kxx (11-02)},
  MRNUMBER = {3618787},
}

@book {MR548467,
    AUTHOR = {Sprind\v{z}uk, Vladimir G.},
     TITLE = {Metric theory of {D}iophantine approximations},
    SERIES = {Scripta Series in Mathematics},
 PUBLISHER = {V. H. Winston \& Sons, Washington, DC; John Wiley \& Sons, New
              York-Toronto-London},
      YEAR = {1979},
     PAGES = {xiii+156},
      ISBN = {0-470-26706-2},
   MRCLASS = {10K15 (10-02 10Fxx)},
  MRNUMBER = {548467},
}

@article {MR3941149,
    AUTHOR = {Chaika, Jon and Constantine, David},
     TITLE = {Quantitative shrinking target properties for rotations and
              interval exchanges},
   JOURNAL = {Israel J. Math.},
  FJOURNAL = {Israel Journal of Mathematics},
    VOLUME = {230},
      YEAR = {2019},
    NUMBER = {1},
     PAGES = {275--334},
      ISSN = {0021-2172,1565-8511},
   MRCLASS = {37E05 (37B10 37E10)},
  MRNUMBER = {3941149},
MRREVIEWER = {David\ Ralston},
       DOI = {10.1007/s11856-018-1824-8},
       URL = {https://doi.org/10.1007/s11856-018-1824-8},
}

@incollection {MR2508636,
    AUTHOR = {Beresnevich, Victor and Bernik, Vasily and Dodson, Maurice and
              Velani, Sanju},
     TITLE = {Classical metric {D}iophantine approximation revisited},
 BOOKTITLE = {Analytic number theory},
     PAGES = {38--61},
 PUBLISHER = {Cambridge Univ. Press, Cambridge},
      YEAR = {2009},
      ISBN = {978-0-521-51538-2},
   MRCLASS = {11J83 (11K60)},
  MRNUMBER = {2508636},
MRREVIEWER = {Anish\ Ghosh},
}

@article {MR1724255,
    AUTHOR = {Kleinbock, Dmitry},
     TITLE = {Badly approximable systems of affine forms},
   JOURNAL = {J. Number Theory},
  FJOURNAL = {Journal of Number Theory},
    VOLUME = {79},
      YEAR = {1999},
    NUMBER = {1},
     PAGES = {83--102},
      ISSN = {0022-314X,1096-1658},
   MRCLASS = {11J20 (22F30 37C85)},
  MRNUMBER = {1724255},
MRREVIEWER = {Henrietta\ Dickinson},
       DOI = {10.1006/jnth.1999.2419},
       URL = {https://doi.org/10.1006/jnth.1999.2419},
}

@article {MR4834219,
    AUTHOR = {Beresnevich, V. and Datta, S. and Ghosh, A. and Ward, B.},
     TITLE = {Rectangular shrinking targets for {$\mathbb{Z}^m$} actions on
              tori: well and badly approximable systems},
   JOURNAL = {Acta Arith.},
  FJOURNAL = {Acta Arithmetica},
    VOLUME = {216},
      YEAR = {2024},
    NUMBER = {4},
     PAGES = {349--363},
      ISSN = {0065-1036,1730-6264},
   MRCLASS = {11J83 (11K60)},
  MRNUMBER = {4834219},
       DOI = {10.4064/aa231108-27-8},
       URL = {https://doi.org/10.4064/aa231108-27-8},
}

@misc{HK25,
author = {Manuel Hauke and Emmanuel Kowalski},
title = {Rational approximation with chosen numerators},
howpublished = "\url{https://arxiv.org/abs/2502.08335}",
year = {2025},
}

@misc{BHV24,
author = {Victor Beresnevich and Manuel Hauke and Sanju Velani},
title = {{Borel}-{Cantelli}, zero-one laws and inhomogeneous {Duffin}-{Schaeffer}},
howpublished = "\url{https://arxiv.org/abs/2406.19198}",
year = {2024},
}

@incollection {3problems,
    AUTHOR = {Beresnevich, Victor and Velani, Sanju},
     TITLE = {A note on three problems in metric {D}iophantine
              approximation},
 BOOKTITLE = {Recent trends in ergodic theory and dynamical systems},
    SERIES = {Contemp. Math.},
    VOLUME = {631},
     PAGES = {211--229},
 PUBLISHER = {Amer. Math. Soc., Providence, RI},
      YEAR = {2015},
      ISBN = {978-1-4704-0931-9},
   MRCLASS = {11J83 (11J13 11J20 11K60 37F35)},
  MRNUMBER = {3330347},
MRREVIEWER = {Dzmitry\ Badziahin},
       DOI = {10.1090/conm/631/12605},
       URL = {https://doi.org/10.1090/conm/631/12605},
}

@article {KW,
    AUTHOR = {Kleinbock, Dmitry and Weiss, Barak},
     TITLE = {Modified {S}chmidt games and a conjecture of {M}argulis},
   JOURNAL = {J. Mod. Dyn.},
  FJOURNAL = {Journal of Modern Dynamics},
    VOLUME = {7},
      YEAR = {2013},
    NUMBER = {3},
     PAGES = {429--460},
      ISSN = {1930-5311,1930-532X},
   MRCLASS = {37A17 (11J83 37A45)},
  MRNUMBER = {3296561},
MRREVIEWER = {Jimmy\ Tseng},
       DOI = {10.3934/jmd.2013.7.429},
       URL = {https://doi.org/10.3934/jmd.2013.7.429},
}

@article {MR250978,
    AUTHOR = {Pjartli, A. S.},
     TITLE = {Diophantine approximations of submanifolds of a {E}uclidean
              space},
   JOURNAL = {Funkcional. Anal. i Prilo\v zen.},
  FJOURNAL = {Akademija Nauk SSSR. Funkcional\cprime nyi Analiz i ego
              Prilo\v zenija},
    VOLUME = {3},
      YEAR = {1969},
    NUMBER = {4},
     PAGES = {59--62},
      ISSN = {0374-1990},
   MRCLASS = {10.30 (14.00)},
  MRNUMBER = {250978},
MRREVIEWER = {J.\ B.\ Roberts},
}

\end{document}